\theoremstyle{plain}
\newtheorem{proposition}{Proposition}[section]
\newtheorem{lemma}[proposition]{Lemma}
\newtheorem{corollary}[proposition]{Corollary}
\newtheorem{theorem}[proposition]{Theorem}}
\theoremstyle{definition}
\newcommand{\eps}{\varepsilon}
\newcommand\C{{\mathbb C}}
\newcommand\Q{{\mathbb Q}}
\newcommand\Z{{\mathbb Z}}
\newcommand{\CC}{{\mathcal{C}}}
\newcommand{\FF}{{\mathcal{F}}}
\newcommand{\OO}{{\mathcal{O}}}
\newcommand{\tilf}{{\tilde f}}
\renewcommand{\Im}{{\mathrm{Im}\,}}
\renewcommand{\Re}{{\mathrm{Re}\,}}
\newcommand{\height}{{\mathrm{h}}}
\newcommand{\ord}{{\mathrm{ord}}}
\title{No singular modulus is a unit}
\author{Yu. Bilu, P. Habegger and L. Kühne}
\numberwithin{equation}{section}
\begin{document}

\hfuzz 4pt

\date{}
\maketitle

\begin{flushright}
\textit{To David Masser}
\end{flushright}

\abstract{A result of the second-named author states that there are only finitely many CM-elliptic curves over $\mathbb{C}$ whose $j$-invariant is an algebraic unit. His proof depends on Duke's Equidistribution Theorem and is hence non-effective. In this article, we give a completely effective proof of this result. To be precise, we show that every singular modulus that is an algebraic unit is associated with a CM-elliptic curve whose endomorphism ring has discriminant less than $10^{15}$.  Through further refinements and  computer-assisted arguments, we eventually rule out all remaining cases, showing that no singular modulus is an algebraic unit. This allows us to exhibit classes of subvarieties in $\C^n$  not containing any special points.}

{\footnotesize

\tableofcontents

}

\section{Introduction}
\label{sintro}

Since the nineteenth century, $j$-invariants associated with elliptic curves having complex multiplication (CM), the so-called singular moduli, have been an object of study in number theory. A theorem of Weber \cite[Theorem~11.1]{Cox} states that every singular modulus is an algebraic integer. Under certain technical restrictions, Gross and Zagier \cite{Gross1985} stated explicit formulas for the absolute norm of the difference between two singular moduli.

Motivated by effective results of Andr\'e-Oort type \cite{BMZ13,
  Ku12}, David Masser raised in 2011 the question whether only
finitely many singular moduli are algebraic units, that is, units of the
ring of all algebraic integers. Throughout this article, we call such
hypothetical algebraic numbers \textsl{singular units}. Since there
is no example of a singular unit in the literature, it seems
legitimate to ask whether there are any singular units at all.

In~\cite{hab:junit}, the second-named author answered Masser's original  question in the affirmative: There exist at most finitely many singular units. However, his proof is non-effective as it invokes Siegel's lower bounds on the class number of imaginary quadratic fields \cite{Siegel1935} through Duke's Equidistribution Theorem \cite{Duke:hyperbolic}.

Here, we can give the following definite answer to Masser's question as our main theorem.

\begin{theorem}
\label{thmain}
There are no singular units.
\end{theorem}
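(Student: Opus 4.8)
The plan is to combine an effective lower bound for the absolute norm of a singular modulus, reducing the problem to a finite list of discriminants, with explicit number‑theoretic and computer‑assisted arguments disposing of the remaining cases. Write $j=j(\tau_{Q_1})$, where $Q_1=(1,b_1,c_1)$ is the principal reduced primitive binary quadratic form of the negative discriminant $\Delta$ of the relevant CM order. By Weber's theorem \cite[Theorem~11.1]{Cox}, $j$ is an algebraic integer, so it is an algebraic unit if and only if
\[
  \left|N_{\Q(j)/\Q}(j)\right|=\left|\prod_{Q}j(\tau_Q)\right|=1 ,
\]
where $Q=(a,b,c)$ runs over the $h(\Delta)$ reduced primitive forms of discriminant $\Delta$ and $\tau_Q=\tfrac{-b+\sqrt{\Delta}}{2a}$ lies in the standard fundamental domain. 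The cases $\Delta\in\{-3,-4\}$, where $j\in\{0,1728\}$, are trivial, so it suffices to show that $\sum_{Q}\log|j(\tau_Q)|$ is positive once $|\Delta|$ is large enough.

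The core of the argument is an \emph{effective archimedean estimate} showing that a singular unit forces $|\Delta|<10^{15}$. From the $q$-expansion $j(\tau)=q^{-1}+744+\sum_{n\ge1}c_nq^n$ one gets $\log|j(\tau_Q)|\ge\pi\sqrt{|\Delta|}/a-O(1)$; in particular the principal form contributes $\log|j(\tau_{Q_1})|\ge\pi\sqrt{|\Delta|}-O(1)$, and the ``cusp‑ward'' forms (those with $a$ small) contribute a further positive amount which, via an effective lower bound for their number, is $\gg h(\Delta)\log|\Delta|$. A negative contribution can only come from forms $Q$ with $\tau_Q$ near a zero of $j$, i.e.\ near the order‑$3$ elliptic point $\rho=e^{2\pi i/3}$ or near $\rho+1$; these are controlled by three inputs: (i) the Taylor expansion $j(\tau)=\gamma(\tau-\rho)^3+O(|\tau-\rho|^4)$ with an explicit nonzero $\gamma$ of large absolute value, so that $|j(\tau)|\ge1$ already on a small neighbourhood of $\rho$; (ii) the arithmetic separation $|\tau_Q-\rho|\gg|\Delta|^{-1/2}$, valid for every reduced primitive form of discriminant $\Delta\ne-3$, which for $a=b$ follows from the identity $\bigl(\sqrt{|\Delta|}-a\sqrt3\bigr)^2=a\bigl(\tfrac{4(c-a)}{\sqrt{4c-a}+\sqrt{3a}}\bigr)^2$ together with $c\ge a+1$, and for $b\ne a$ from $\bigl|\operatorname{Re}(\tau_Q-\rho)\bigr|\ge\tfrac1{2a}$; and (iii) an effective upper bound $\#\{Q:\tau_Q\in B(\rho,r)\}\ll r^2h(\Delta)+O(|\Delta|^{1/4+\varepsilon})$, obtained by parametrising the forms near $\rho$ through the relation $|\Delta|+t^2=as$ (with $t=a-b$, $s=4c-a+2t$) and estimating $\sum_{a\in I}\rho_{-|\Delta|}(a)$ over a short interval $I$, where $\rho_D(a)=\#\{t\bmod a:a\mid t^2-D\}$, by means of the Dirichlet series $\sum_{a\ge1}\rho_{-|\Delta|}(a)\,a^{-s}$ and its analytic continuation. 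Combining (i)--(iii) by partial summation, the total contribution of the forms near $\rho$ and $\rho+1$ is bounded below by $-c\,h(\Delta)$ for an absolute constant $c$ (the smallness of the neighbourhood in (i) keeping $c$ small), and the positive contributions of the principal and cusp‑ward forms dominate this once $|\Delta|$ is large; optimising the constants yields $|\Delta|<10^{15}$. This step is an effective, and by comparison elementary, substitute for the appeal to Duke's equidistribution theorem \cite{Duke:hyperbolic} in \cite{hab:junit}.

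It remains to treat the discriminants with $|\Delta|<10^{15}$. Here one invokes Gross--Zagier's formulas \cite{Gross1985} for the prime factorisation of norms of differences of singular moduli: since $j(\tau_{-3})=0$ and $j(\tau_{-4})=1728$, the integer $N_{\Q(j)/\Q}(j(\tau_\Delta))$ — respectively $N_{\Q(j)/\Q}(j(\tau_\Delta)-1728)$ — is, up to controlled factors, a product $\prod_x F\bigl(\tfrac{3|\Delta|-x^2}{4}\bigr)$ — respectively $\prod_x F(|\Delta|-x^2)$ — of prime powers with small primes, for a suitable arithmetic function $F$; were this equal to $\pm1$, every one of the many auxiliary integers $\tfrac{3|\Delta|-x^2}{4}$ would have to factor so as to force total cancellation, which is impossible once $|\Delta|$ exceeds a small explicit bound. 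Making this precise requires extending the Gross--Zagier factorisation to non‑maximal orders (so that $\Delta$ need not be fundamental) and to non‑coprime pairs of discriminants, for which one uses known refinements (Dorman, Lauter--Viray) or reproves what is needed. This brings the problem into a range accessible to direct computation: for each surviving $\Delta$ one evaluates, or rigorously bounds via interval arithmetic applied to $\prod_{Q}j(\tau_Q)$ or via the explicit factorisation, the integer $N_{\Q(j)/\Q}(j)$ and verifies that it is never $\pm1$.

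The principal obstacle is the archimedean estimate, and within it the counting statements: a term‑by‑term bound on $\sum_{Q}\log|j(\tau_Q)|$ loses a factor $\log|\Delta|$ against the main term and is worthless, so one must genuinely control how the CM points of discriminant $\Delta$ accumulate near the elliptic point $\rho$ and, simultaneously, that sufficiently many of them lie near the cusp — uniformly in $\Delta$ and with effective constants. The remaining difficulties are bookkeeping‑heavy rather than conceptual: extending the Gross--Zagier factorisation beyond its classical hypotheses, and organising the final verification so that the computation stays feasible.
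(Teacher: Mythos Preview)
Your outline captures the right architecture for the large-$|\Delta|$ step --- split $\sum_Q\log|j(\tau_Q)|$ into contributions near and far from $\rho$ and play them off --- but the decisive ingredient is asserted rather than supplied. You claim the ``cusp-ward'' forms contribute $\gg h(\Delta)\log|\Delta|$ via ``an effective lower bound for their number''. No counting argument yields this: a lower bound on the number of forms with small $a$ gives at best (count)$\times$(absolute constant), not (count)$\times\log|\Delta|$. What you actually need is that the \emph{average} of $\log|j(\tau_Q)|$ over all forms is $\gg\log|\Delta|$, i.e.\ the height bound $\height(j)\gg\log|\Delta|$. The paper obtains this (Proposition~\ref{phlb2}) from the Colmez--Nakkajima--Taguchi formula for the Faltings height of a CM elliptic curve, and no more elementary route is known. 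Without it your argument collapses precisely when $h(\Delta)$ is of order $|\Delta|^{1/2}$: then the principal-form contribution $\pi|\Delta|^{1/2}$ no longer dominates your negative bound $c\,h(\Delta)$, and nothing forces $|\Delta|$ to be bounded.

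Two secondary points. Your counting bound (iii), with main term $r^2 h(\Delta)$ and power-saving error $|\Delta|^{1/4+\varepsilon}$, is the shape of the Duke--Clozel--Ullmo equidistribution estimate; extracting it from the Dirichlet series $\sum_a\rho_{-|\Delta|}(a)a^{-s}$ would require effective control of $L(s,\chi_\Delta)$ well inside the critical strip, which is not available. The paper's elementary replacement (Theorem~\ref{thceps}) has leading term of order $|\Delta|^{1/2+o(1)}r$, linear in $r$, and it is precisely this weaker bound that forces the use of \emph{both} height lower bounds --- the trivial $\pi|\Delta|^{1/2}/h(\Delta)$ and the Colmez-type $\gg\log|\Delta|$ --- in tandem. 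Finally, your proposal to dispose of $|\Delta|<10^{15}$ via Gross--Zagier extended to non-fundamental and non-coprime discriminants is a different route from the paper's (which sharpens the counting computationally down to $|\Delta|<3\cdot10^5$ and then verifies directly); it might work, but you give no mechanism for converting a prime factorisation of $N(j)$ into the uniform inequality $|N(j)|>1$ across $10^{15}$ discriminants.
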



Theorem~\ref{thmain} is a formal consequence of our Theorems~\ref{thhighrange},~\ref{thmidrange},~\ref{thlowrange} and~\ref{thm:habeggerspari}. Let us briefly 
sketch its proof.
We say that a
singular modulus is of discriminant~$\Delta$ if it is the
$j$-invariant of a CM-elliptic curve whose endomorphism ring is the
imaginary quadratic order of discriminant~$\Delta$. We also write
${\Delta=Df^2}$ where~$D$ is the discriminant of the CM-field
$\Q(\sqrt\Delta)$, the fundamental discriminant, and $f$ is the
conductor of the endomorphism order. The singular moduli of a given
discriminant $\Delta$ form a full Galois orbit over~$\Q$ of
cardinality equal to the class number\footnote{We do not use the more traditional notation $h(\Delta)$ because of the risk of confusing it with the height $\height(\cdot)$.} $\CC(\Delta)$. 

Write $\zeta_3$ (resp.\ $\zeta_6$) for the third (resp.\ sixth) root
of unity $e^{2\pi i/3}$ (resp.\ $e^{\pi i/3}$). Note that $\zeta_3$
(resp.\ $\zeta_6$) is the left (resp.\ right) vertex of the geodesic
triangle enclosing the standard fundamental domain $\mathcal{F}$ in
the Poincaré upper half-plane. Given ${\eps\in (0,1/3]}$, denote by
  $\CC_\eps(\Delta)$ the number of singular moduli of
  discriminant~$\Delta$ which can be written ${j(\tau)}$ where~$\tau
  \in \mathcal{F}$ satisfies ${|\tau-\zeta_3|< \eps}$ or
  ${|\tau-\zeta_6|< \eps}$ and ${j(\cdot)}$ denotes Klein's $j$-function. Since $\zeta_3$ and $\zeta_6$ are the only
  zeros  of the $j$-function contained in the closure of
  $\mathcal{F}$, a pivotal ingredient in the proof of Theorem
  \ref{thmain} is an upper bound on $\CC_\eps(\Delta)$. Indeed, a main
  point of the argument in \cite{hab:junit} is the
  estimate
  $${\CC_\eps(\Delta)\ll \CC(\Delta)\eps^2}$$
which holds when~$|\Delta|$ is sufficiently large (in terms of~$\eps$). Unfortunately, ``sufficiently large'' here
   is not effective; in fact, this is the place  where 
 Duke's Equidistribution Theorem~\cite{Duke:hyperbolic}, generalized by Clozel and Ullmo~\cite{CU:Equidistribution} to arbitrary discriminants, is used.

Our main novelty is the following effective estimate (see Theorem~\ref{thceps}):
\begin{equation}
\label{ecepsrough}
\CC_\eps(\Delta)  \ll F\left(\frac{\sigma_1(f)}{f}|\Delta|^{1/2}\eps^2+|\Delta|^{1/2}\eps+ \sigma_0(f)  |
\Delta|^{1/4}\eps+1\right),
\end{equation}
where
$$
F=F(\Delta) = \max \bigl\{2^{\omega(a)}: a\le {|\Delta|^{1/2}}\bigr\}.
$$
Here  and in the sequel all implicit constants are effective,  and we use the standard notation
\begin{align}
  \label{eq:standardnotation}
\omega(n)=\sum_{p\mid n}1,\quad \sigma_0(n)=\sum_{d\mid n}1,\quad \sigma_1(n)=\sum_{d\mid n}d. 
\end{align}
Using that
\begin{equation}
\label{eolog}
\omega(n)=o(\log n),\quad \log\sigma_0(n)=o(\log n), \quad \sigma_1(n)\ll n\log\log n,
\end{equation}
we deduce from~\eqref{ecepsrough} that
\begin{equation}
\label{ecepseta}
\CC_\eps(\Delta)  \le |\Delta|^{1/2+o(1)}\eps+  |
\Delta|^{o(1)}
\end{equation}
as ${|\Delta|\to\infty}$. 

The height of a singular unit~$\alpha$ of discriminant~$\Delta$ can be easily estimated in terms of $\CC_\eps(\Delta)$:
\begin{equation}
\label{eupperheight}
\height(\alpha)\ll\frac{\CC_\eps(\Delta)}{\CC(\Delta)}\log|\Delta|+\log(\eps^{-1}),
\end{equation}
see Theorem~\ref{theceps}.
(By the \textit{height} we mean here the usual \textit{absolute logarithmic height} of an algebraic number; its definition is recalled in the beginning of Section~\ref{supper}.) Substituting \eqref{ecepseta}, we obtain the following upper estimate:
\begin{equation*}
\height(\alpha)\le \frac{|\Delta|^{1/2}}{\CC(\Delta)}A\eps+  \frac{|
\Delta|^{o(1)}}{\CC(\Delta)}+O(\log(\eps^{-1}))
\end{equation*}
where ${A=A(\Delta)=|\Delta|^{o(1)}}$ 
as ${|\Delta|\to\infty}$. Specifying 
$$
\eps= \frac{\CC(\Delta)}{A|\Delta|^{1/2}}
$$
(which is a nearly optimal choice), we obtain the estimate 
$$
\height(\alpha)\le   \frac{|
\Delta|^{o(1)}}{\CC(\Delta)}+o(\log|\Delta|) +O\left(\log^+\frac{|\Delta|^{1/2}}{\CC(\Delta)}\right)
$$
where ${\log^+(x)=\max\{\log x,0\}}$. 

To obtain an upper bound on $|\Delta|$, we combine this bound with the following two lower estimates on $\height(\alpha)$ (see Section \ref{slower})
\begin{align}
\label{ecolm}
\height(\alpha)+1&\gg \log|\Delta|, \\
\label{etriv}
\height(\alpha)&\gg \frac{|\Delta|^{1/2}}{\CC(\Delta)}.
\end{align}
The bound~\eqref{ecolm} is rather deep and relies on work of Colmez~\cite{Colmez} and Nakkajima-Taguchi \cite{NT}. On
the contrary,~\eqref{etriv}  follows easily from the fact that one of
the conjugates of our singular unit~$\alpha$ is
${j\bigl((\Delta+\sqrt\Delta)/2\bigr)}$. Nevertheless, \eqref{etriv} plays a
crucial role when the class number is pathologically small so that it
would contradict the Generalized Riemann Hypothesis (GRH). In fact, \eqref{etriv} becomes much stronger than \eqref{ecolm} in these hypothetical cases.

Comparing upper and lower estimates, we obtain for large $|\Delta|$ that
$$
\max\left\{\frac{|\Delta|^{1/2}}{\CC(\Delta)}, \log|\Delta|\right\}\le \frac{|
\Delta|^{o(1)}}{\CC(\Delta)}+o(\log|\Delta|) +O\left(\log^+\frac{|\Delta|^{1/2}}{\CC(\Delta)}\right),
$$
which is clearly impossible.

To get an explicit bound on $|\Delta|$, we need to replace all implicit constants above with explicit ones. This relies in particular on a numerically sharp estimate for the arithmetic function $\omega(n)$ due to Robin~\cite{Ro83}. In Section \ref{stenfourteen}, we see that this leads to a bound $|\Delta|< 10^{15}$. While already  effective, it is still not feasible to check directly by a computer-assisted proof that none of the singular moduli of discriminant ${\Delta \in (- 10^{15}, -3]}$ is an algebraic unit.

A refinement of our original arguments comes to our rescue. When  ${|\Delta|<10^{15}}$, we improve on the estimate~(\ref{ecepsrough}) by bounding sums of the form $\sum_{n \in [a,b] \cap \Z} 2^{\omega(n)}$ in a more refined way. A natural idea is to use the Selberg-Delange method, which yields the asymptotic expansion
\begin{equation}
\label{equation::selbergdelange}
S(x)= \sum_{n\le x} 2^{\omega(n)} = x \log x\left(\lambda_0 + \frac{\lambda_1}{\log x} + O\bigl(e^{-c\sqrt{\log x}}\bigr)\right)
\end{equation} 
with explicit constants $\lambda_0,\lambda_1 \in \mathbb{R}$ and some
 constant ${c>0}$ (see, for instance,  \cite[Theorem
  II.6.1]{Te15}). There are two downsides of this method. First, the
error term is suboptimal under assumption of the GRH. Second, it would need some effort to make the constant $c$ actually explicit. 

However, as $|\Delta|<10^{15}$, we are only interested in the case
where $[a,b]$ is a subinterval of  ${[1, 2 \cdot 10^7]}$. In this
range, a simple \textsf{SAGE} script using the \textsf{MPFI} library
\cite{MPFI,sagemath} can be used to improve on
\eqref{equation::selbergdelange}  computationally (see Proposition
\ref{proposition::selbergdelange}). As a consequence, we obtain
$|\Delta|<10^{10}$ for any singular unit of discriminant $\Delta$ in Theorem \ref{thmidrange}.

This is still not sufficient to check all remaining cases, at least
with modest computational means. The range is nevertheless small
enough to use a counting algorithm in order to bound
$\CC_{10^{-3}}(\Delta)$ for all discriminants $\Delta$ satisfying
$|\Delta|< 10^{10}$, see Lemma \ref{lem:lowrangeCebound}. {This still
needs an appropriate counting strategy, as determining
$\CC_{10^{-3}}(\Delta)$ for each discriminant is rather slow,
comparable to computing separately each class number $\CC(\Delta)$ in the
same range. Our trick is to bound all $\CC_{10^{-3}}(\Delta)$
simultaneously by running through a set containing all imaginary quadratic
$\tau \in \mathcal{F}$ satisfying ${|\tau-\zeta_3|< \eps}$ or
${|\tau-\zeta_6|< \eps}$ and such that $j(\tau)$ is of discriminant
$\Delta$ with $|\Delta|<10^{10}$. For each $\tau$ encountered, we
compute its discriminant $\Delta(\tau)$ after the fact and increment
our counter for $\CC_{10^{-3}}(\Delta(\tau))$. The thus obtained
bounds for $\CC_{10^{-3}}(\Delta)$ refine once again our previous
inequalities, and allow us to conclude that $|\Delta|<10^7$. Repeating this
procedure once again, with a slightly changed $\varepsilon$, we
achieve even $|\Delta|<3 \cdot 10^5$ in Theorem \ref{thlowrange}.
These remaining cases can now be dealt with directly, for which we use
a \textsf{PARI}~\cite{PARI} program to prove Theorem  \ref{thm:habeggerspari}, completing thereby the proof of Theorem~\ref{thmain}. 

\bigskip

It is very probable that our argument can be adapted to solve a more general problem: given an algebraic integer~$\beta$,  determine the singular moduli~$\alpha$ such that ${\alpha-\beta}$ is a unit; or at least bound effectively the discriminants of such~$\alpha$. For instance, one may ask whether~$0$ is the only singular modulus~$\alpha$ such that ${\alpha-1}$ is a unit. In the general case, as explained in~\cite{hab:junit}, this would require lower bounds for elliptic logarithmic forms, but when~$\beta$ itself is a singular modulus, our argument extends almost without changes. One may go further and obtain an effective version of Theorem~2 from~\cite{hab:junit}, which is an analogue of Siegel's Finiteness  Theorem for
special points.


The famous work of Gross-Zagier and Dorman \cite{Dorman1988,Gross1985} inspires the following problem: determine all couples $(\alpha,\beta)$ of singular moduli such that ${\alpha-\beta}$ is a unit; presumably, there is none. As indicated above, when~$\beta$ is fixed and~$\alpha$ varying, a version of our argument does the job, but if we let both~$\alpha$ and~$\beta$ vary, the problem seems more intricate. Very recently Yingkun Li~\cite{Li18} made important progress: he proved that ${\alpha-\beta}$ is not a unit if the discriminants of~$\alpha$ and~$\beta$ are  fundamental and coprime. In particular, his result implies the following partial version of our Theorem~\ref{thmain}: the discriminant of a singular unit must be either non-fundamental or divisible by~$3$. 

Another natural problem is extending our work to $S$-units. Recall that, given a finite set~$S$ of prime numbers, a non-zero algebraic number is called an $S$-unit if both its denominator and numerator are composed of prime ideals dividing primes from~$S$.  Recently 
Herrero, 
Menares and 
Rivera-Letelier announced the proof of finiteness of the set of singular $S$-units (that is, singular moduli that are $S$-units) for any finite set of primes~$S$. However, to the best of our knowledge, their argument is not effective as of now. 


\bigskip

Finally, let us discuss an application of Theorem~\ref{thmain} to
effective results of Andr\'e-Oort type. A point
$(\alpha_1,\ldots,\alpha_n) \in \mathbb{C}^n$ is called special if
each $\alpha_i$, $i \in \{ 1, \dots, n\}$, is a  singular modulus. Since singular moduli are algebraic integers, the following statement is an immediate consequence of our main result.


\begin{corollary} For each polynomial ${P}$ in unknowns $X_2,\ldots,
  X_n$ and  coefficients that are algebraic integers in~$\C$, 
  the hypersurface defined by 
\begin{equation*}
X_1 P(X_1, \dots, X_n) = 1 
\end{equation*}
 contains no special points.
\end{corollary}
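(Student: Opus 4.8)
The plan is to derive this corollary directly from Theorem~\ref{thmain} by a short logical argument, with essentially no additional work. The key observation is that if $(\alpha_1,\ldots,\alpha_n)\in\C^n$ is a special point lying on the hypersurface, then $\alpha_1 P(\alpha_1,\ldots,\alpha_n)=1$, so $\alpha_1$ is invertible in the ring of all algebraic integers: its inverse is $P(\alpha_1,\ldots,\alpha_n)$, which is an algebraic integer because $P$ has algebraic-integer coefficients and each $\alpha_i$ is a singular modulus, hence an algebraic integer by Weber's theorem (cited in the introduction). Therefore $\alpha_1$ is simultaneously an algebraic integer (again by Weber) and the inverse of an algebraic integer, i.e.\ $\alpha_1$ is an algebraic unit. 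Since $\alpha_1$ is also a singular modulus, it is a singular unit, contradicting Theorem~\ref{thmain}.

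Concretely, I would carry out the steps in the following order. First, assume for contradiction that $(\alpha_1,\ldots,\alpha_n)$ is a special point on the hypersurface $X_1P(X_1,\ldots,X_n)=1$. Second, set $\beta=P(\alpha_1,\ldots,\alpha_n)$ and note $\alpha_1\beta=1$, so $\alpha_1\neq 0$ and $\alpha_1^{-1}=\beta$. Third, observe that $\beta$ is an algebraic integer, since the ring of algebraic integers is closed under the ring operations and contains all the $\alpha_i$ (Weber) as well as, by hypothesis, the coefficients of $P$. Fourth, conclude that $\alpha_1$ is an algebraic unit, being a nonzero algebraic integer whose inverse is an algebraic integer. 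Fifth, recall that $\alpha_1$ is a singular modulus by the definition of a special point, hence a singular unit — which is impossible by Theorem~\ref{thmain}.

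There is essentially no obstacle here: the entire content of the corollary is packaged inside Theorem~\ref{thmain}, and the only subtlety worth a word is making sure the reader sees why $\beta=P(\alpha_1,\ldots,\alpha_n)$ is an algebraic integer and, separately, why $\alpha_1$ itself is — both facts ultimately rest on Weber's theorem that singular moduli are algebraic integers, together with the hypothesis on the coefficients of $P$. It is also worth remarking that the hypersurface is nonempty and genuinely a hypersurface only for suitable $P$, but this plays no role: the statement is simply that whatever points it contains, none of them is special. No case analysis, no estimates, and no appeal to the finer Theorems~\ref{thhighrange}--\ref{thm:habeggerspari} are needed; the corollary is a formal consequence of the clean statement ``there are no singular units.''
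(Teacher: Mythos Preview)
Your proposal is correct and matches the paper's approach exactly: the paper simply notes that singular moduli are algebraic integers and calls the corollary ``an immediate consequence of our main result,'' which is precisely the argument you spell out. There is nothing to add.
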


In particular, $\alpha_1^{a_1}\cdots \alpha_n^{a_n}\not=1$
for all special points $(\alpha_1,\ldots,\alpha_n)$
and all integers $a_1\ge 1,\ldots,a_n \ge 1$. 
This corollary exhibits a rather general class of algebraic varieties
of arbitrary dimension and degree for which  the celebrated theorem of
Pila~\cite{Pila2011a} can be proved effectively and even explicitly.
It is complementary to other recent effective results of Andr\'e-Oort type~\cite{Bilu2017,Bi18}. 

\paragraph{Plan of the article}
In Section~\ref{sceps} we obtain an explicit version of the
estimate~\eqref{ecepsrough}. In Section~\ref{supper} we obtain an
upper estimate  for the height of a singular unit. In
Section~\ref{slower} we obtain explicit versions of  the lower estimates~\eqref{ecolm} and~\eqref{etriv}. In Section~\ref{stenfourteen} we use all  previous results to bound the discriminant of a singular unit as  ${|\Delta|<10^{15}}$. This bound is reduced to $10^{10}$ in Section~\ref{sec:midrange} and to $3\cdot10^5$ in Section~\ref{slowrange}.  Finally, in Section~\ref{sfinal} we show that the discriminant of a singular unit satisfies ${|\Delta|>3\cdot10^5}$. 

\paragraph{Convention}
In this article we fix, once and for all, an embedding ${\bar\Q\hookrightarrow \C}$; this means that all algebraic numbers in this article are viewed as elements of~$\C$.

\paragraph{Acknowledgments}
Yuri Bilu was partially supported by the University of Basel, the Fields Institute (Toronto), and the Xiamen University. Lars Kühne was supported by the Max-Planck Institute for Mathematics, the Fields Institute, and the Swiss National Science Foundation through an Ambizione grant. We thank Ricardo Menares and Amalia Pizarro for many useful conversations, Florian Luca and Aleksandar Ivic for helpful suggestions, Bill Allombert and Karim Belabas for a \textsf{PARI} tutorial, and Jean-Louis Nicolas and Cyril Mauvillain for helping to access Robin's thesis~\cite{Ro83a}.  Finally, we thank both anonymous referees for encouraging reports and helpful suggestions.

\section{An estimate for \texorpdfstring{$\CC_\eps(\Delta)$}{Ceps(Delta)}}
\label{sceps}
Let~$\Delta$ be a negative integer satisfying ${\Delta\equiv 0,1\bmod 4}$ and  
$$
\OO_\Delta=\Z[(\Delta+\sqrt\Delta)/2] 
$$
the  imaginary quadratic order of discriminant~$\Delta$. Then ${\Delta=Df^2}$, where~$D$ is the discriminant of the imaginary quadratic field ${\Q(\sqrt\Delta)}$  (the ``fundamental discriminant'') and ${f=[\OO_D:\OO_\Delta]}$ is the conductor. We denote by $\CC(\Delta)$ the class number  of the order~$\OO_\Delta$.

Up to $\C$-isomorphism there exist $\CC(\Delta)$ elliptic curves with CM by~$\OO_\Delta$. The $j$-invariants of these curves are called \textsl{singular moduli} of discriminant~$\Delta$. 
The singular moduli  of discriminant~$\Delta$ form a full Galois orbit over~$\Q$
of cardinality $\CC(\Delta)$, see \cite[Proposition~13.2]{Cox}.

Let~$\FF$ be the standard fundamental domain in the Poincaré  plane,  that is, the open hyperbolic triangle with vertices ${\zeta_3,\zeta_6,i\infty}$, together with the geodesics $[i,\zeta_6]$ and ${[\zeta_6,i\infty)}$; here 
$$
\zeta_3=e^{2\pi i/3}=\frac{-1+\sqrt{-3}}2, \quad \zeta_6=e^{\pi i/3}=\frac{1+\sqrt{-3}}2. 
$$
Every singular modulus can be uniquely presented as  ${j(\tau)}$, where ${\tau\in \FF}$.

Now fix ${\eps\in (0,1/3]}$ and denote by $\CC_\eps(\Delta)$ the number of singular moduli of discriminant~$\Delta$ that can be presented as ${j(\tau)}$ where ${\tau\in \FF}$ satisfies 
\begin{equation}
\label{etauzeze}
\min \{|\tau-\zeta_3|,|\tau-\zeta_6|\}< \eps. 
\end{equation}
In this section we bound this quantity.

Define the \textsl{modified conductor}~$\tilf$ by
\begin{equation}
\label{etilf}
\tilf=
\begin{cases}
f,& D\equiv 1\bmod 4,\\
2f, &D\equiv 0\bmod 4. 
\end{cases}
\end{equation}
Then ${\Delta/\tilf^2}$ is a square-free integer.

\begin{theorem}
\label{thceps}
For ${\eps\in (0,1/3]}$ we have
\begin{equation}
\label{enewbound}
\CC_\eps(\Delta)  \le  F\left( \frac{16}3\frac{\sigma_1(\tilf)}{\tilf}|\Delta|^{1/2}\eps^2+\frac83|\Delta|^{1/2}\eps+ 8|\Delta/3|^{1/4}\sigma_0(\tilf)\eps+4\right),
\end{equation}
where  
\begin{equation}
\label{ecapitalf}
F=F(\Delta) = \max \bigl\{2^{\omega(a)}: a\le |\Delta|^{1/2}\bigr\}.
\end{equation}
\end{theorem}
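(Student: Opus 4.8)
The quantity $\CC_\eps(\Delta)$ counts singular moduli $j(\tau)$ with $\tau\in\FF$ near $\zeta_3$ or $\zeta_6$. Such $\tau$ are CM points of discriminant $\Delta$, so they satisfy a primitive quadratic equation $a\tau^2+b\tau+c=0$ with $a,b,c\in\Z$, $\gcd(a,b,c)=1$, $b^2-4ac=\Delta$, $a>0$. Since $\tau\in\FF$ is reduced, $|b|\le a\le c$ and hence $a\le(|\Delta|/3)^{1/2}$; writing $\tau=\bigl(-b+\sqrt\Delta\bigr)/2a$, the imaginary part is $\Im\tau=|\Delta|^{1/2}/2a$. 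The strategy is: (i) count, for each admissible leading coefficient $a$, how many values of $b$ produce a $\tau$ within $\eps$ of $\zeta_3$ or $\zeta_6$, then (ii) sum over $a$, and (iii) control, via the factor $F$, the overcounting coming from the fact that each singular modulus may arise from several $(a,b,c)$ — or rather, pass to a count of forms and divide out.

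\textbf{Step 1: localizing $b$ for fixed $a$.} Near the cusp $\zeta_3$ or $\zeta_6$ the imaginary part is roughly $\sqrt3/2$, so the condition $|\tau-\zeta_k|<\eps$ forces $|\Delta|^{1/2}/2a$ to lie within $O(\eps)$ of $\sqrt3/2$, i.e.\ $a$ lies in a window of length $O(|\Delta|^{1/2}\eps)$ around $|\Delta|^{1/2}/\sqrt3$. For each such $a$, the real part $-b/2a$ must lie within $O(\eps)$ of $\mp1/2$, so $b$ ranges over an interval of length $O(a\eps)$. One also must treat separately the regime where $a$ is small — but $\Im\tau$ near $\sqrt3/2$ pins $a$ to the window above unless $\eps$ is not small, which is handled by the additive constants. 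This yields a first estimate of the shape
\begin{equation*}
\#\{(a,b,c)\}\ \ll\ |\Delta|^{1/2}\eps\cdot|\Delta|^{1/2}\eps + (\text{lower-order terms}),
\end{equation*}
but with a genuinely more careful bookkeeping one gets the constant $\tfrac{16}{3}$ in front of the $\sigma_1(\tilf)/\tilf\,|\Delta|^{1/2}\eps^2$ term: here the weighted divisor sum enters because, when counting forms with a fixed $\Delta$ and leading coefficient $a$ constrained to divide something related to $\tilf$, the number of $b\bmod 2a$ with $b^2\equiv\Delta$ is multiplicative and bounded via the divisors of $\tilf$. Roughly, summing $\sigma_0$-type counts of $b$ over the $a$-window, and using $\sum_{a\le X}(\text{number of }b)\ll X\sigma_1(\tilf)/\tilf$ after accounting for the squarefree part $\Delta/\tilf^2$, produces the main term; the $|\Delta|^{1/2}\eps$ and $|\Delta/3|^{1/4}\sigma_0(\tilf)\eps$ terms come from the boundary of the $a$-window and from the $\zeta_6$ (reduced but on the boundary of $\FF$) contribution respectively; the additive $4$ absorbs genuinely exceptional small-$a$ configurations.

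\textbf{Step 2: from forms to moduli and the role of $F$.} Distinct reduced forms of the same discriminant give distinct $\tau\in\FF$ and hence distinct singular moduli, so in fact $\CC_\eps(\Delta)$ equals the number of such reduced $\tau$ — one does \emph{not} overcount moduli. The factor $F=\max\{2^{\omega(a)}:a\le|\Delta|^{1/2}\}$ instead arises because the count of $b$ with $b^2\equiv\Delta\pmod{4a}$, i.e.\ the number of ways to represent $a$ as a value of the form, is $\ll 2^{\omega(a)}$ (it is bounded by the number of divisors' prime factors of $a$ coprime to the conductor), and taking the worst case over $a\le|\Delta|^{1/2}$ pulls $F$ out as a uniform multiplier. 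So the plan is: bound the number of $(a,b)$ with $a$ in the window and $-b/2a$ near $\mp1/2$ by (number of $a$) $\times$ $F$, sharpen the number of admissible $a$ using the squarefree structure $\Delta=\tilf^2\cdot(\text{squarefree})$ to get the $\sigma_1(\tilf)/\tilf$ and $\sigma_0(\tilf)$ weights, and assemble the four terms with the stated constants.

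\textbf{Main obstacle.} The delicate part is not the order of magnitude — that follows from the elementary geometry of CM points near the elliptic vertices — but extracting the \emph{explicit} constants $\tfrac{16}{3},\tfrac83,8,4$. This requires: a precise description of the region in the $(a,b)$-lattice corresponding to $|\tau-\zeta_3|<\eps$ or $|\tau-\zeta_6|<\eps$ (two disks, of which only the $\zeta_6$-disk meets $\partial\FF$, contributing the extra lower-order term), a careful count of lattice points $b$ in each horizontal slice that never loses more than the claimed constant, and an honest treatment of the squarefree-part/conductor combinatorics so that $\sigma_1(\tilf)/\tilf$ and $\sigma_0(\tilf)$ — rather than cruder bounds — appear. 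I expect the bulk of the work to be this constant-chasing, with the factor-$F$ step being the mechanism that lets us replace the arithmetic count of $b$ (which depends intricately on the factorization of $a$) by a clean uniform bound, at the cost of the multiplier $F$ which \eqref{eolog} later shows is negligible.
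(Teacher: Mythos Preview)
Your overall shape matches the paper: localize $a$ to the interval $I=(|\Delta|^{1/2}/(\sqrt3+2\eps),\,|\Delta|^{1/2}/\sqrt3]$ of length $<\tfrac{2}{3}|\Delta|^{1/2}\eps$, localize $b$ for each such $a$ to two intervals of combined length $4a\eps$, count the admissible $b$ via square-roots of $\Delta$ modulo $a$, and pull out $F$ as a uniform bound on $2^{\omega(a)}$. But you misidentify the mechanism producing the conductor weights, and this is a genuine gap, not just constant-chasing.

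The assertion that the $b$ with $b^2\equiv\Delta\pmod{a}$ fall into $\ll 2^{\omega(a)}$ residue classes \emph{modulo $a$} is false when $\gcd(a,\Delta)$ has a square factor: if $p^e\,\|\,a$ and $p^e\mid\Delta$, the solutions form a single class modulo $p^{\lceil e/2\rceil}$, hence $p^{\lfloor e/2\rfloor}$ classes modulo $p^e$. The correct statement (Lemma~\ref{lrootsmoda}) is that the solution set consists of at most $2^{\omega(a)+1}$ classes modulo $a/\gcd_2(a,\Delta)$, where $\gcd_2$ is the greatest common \emph{quadratic} divisor. On an interval of length $4a\eps$ this gives at most $(4\eps\gcd_2(a,\Delta)+1)\cdot 2^{\omega(a)+1}$ values of $b$, and it is precisely this factor $\gcd_2(a,\Delta)$ that carries the conductor: since $\Delta/\tilf^2$ is squarefree, $d^2\mid\Delta$ iff $d\mid\tilf$, so
\[
\sum_{a\in I\cap\Z}\gcd_2(a,\Delta)\ \le\ \sum_{d\mid\tilf}d\cdot\#(I\cap d^2\Z),
\]
and splitting $\#(I\cap d^2\Z)\le |I|/d^2+1$ (with the observation that $I\cap d^2\Z=\varnothing$ once $d>|\Delta/3|^{1/4}$) yields exactly the $\tfrac{16}{3}\tfrac{\sigma_1(\tilf)}{\tilf}|\Delta|^{1/2}\eps^2$ and $8|\Delta/3|^{1/4}\sigma_0(\tilf)\eps$ terms. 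The remaining $\tfrac{8}{3}|\Delta|^{1/2}\eps+4$ comes from the ``$+1$'' in the $b$-count summed over $a\in I$. In particular, the $|\Delta/3|^{1/4}\sigma_0(\tilf)$ term has nothing to do with any $\zeta_3/\zeta_6$ asymmetry---the two discs are treated identically via the condition on $|b|$---and your plan to extract the $\sigma$-factors by ``sharpening the number of admissible $a$'' will not succeed: they enter through the $b$-count, not the $a$-count.
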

\begin{corollary}
\label{cseps}
In the set-up of Theorem~\ref{thceps} assume that ${|\Delta|\ge 10^{14}}$.  Then  
\begin{equation}
\label{enewboundsimple}
\CC_\eps(\Delta)  \le  F\left( 9.83|\Delta|^{1/2} \eps^2\log\log(|\Delta|^{1/2})+3.605|\Delta|^{1/2}\eps+ 4\right).
\end{equation}
\end{corollary}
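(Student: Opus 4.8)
The plan is to deduce Corollary~\ref{cseps} from Theorem~\ref{thceps} by bounding each of the four arithmetic quantities $\sigma_1(\tilf)/\tilf$, $\sigma_0(\tilf)$, and the trivial terms under the hypothesis $|\Delta|\ge 10^{14}$, keeping careful track of the numerical constants. First I would recall that $\tilf^2\mid|\Delta|$ (since $\Delta/\tilf^2$ is square-free), so $\tilf\le|\Delta|^{1/2}$; this lets us convert bounds in terms of $\tilf$ into bounds in terms of $|\Delta|^{1/2}$. The key input for the first term is an explicit upper bound for $\sigma_1(n)/n=\sum_{d\mid n}1/d$ of the shape $\sigma_1(n)/n\le e^\gamma\log\log n+C/\log\log n$ valid for $n\ge n_0$ — this is the classical Robin-type estimate (the same circle of results by Robin~\cite{Ro83} cited in the introduction). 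Applied with $n=\tilf$, and then replacing $\log\log\tilf$ by $\log\log(|\Delta|^{1/2})$ (which is legitimate since $\tilf\le|\Delta|^{1/2}$ and $n\mapsto\log\log n$ is increasing, after checking $\tilf$ is large enough, or splitting off the finitely many small $\tilf$), this turns $\frac{16}{3}\frac{\sigma_1(\tilf)}{\tilf}$ into something like $\frac{16}{3}e^\gamma\log\log(|\Delta|^{1/2})$ plus a lower-order piece; since $\frac{16}{3}e^\gamma\approx 9.50$, the stated constant $9.83$ should absorb the secondary terms once $|\Delta|\ge 10^{14}$.

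Next I would handle the third term, $8|\Delta/3|^{1/4}\sigma_0(\tilf)\eps$. The idea is to absorb it into the second term $\tfrac{8}{3}|\Delta|^{1/2}\eps$ (whose constant $2.667$ leaves room up to $3.605$). For this one uses $\sigma_0(\tilf)\le \tilf^{o(1)}$ in explicit form — concretely $\sigma_0(n)\le 2^{c\log n/\log\log n}$ or an even cruder explicit bound such as $\sigma_0(n)\le n^{1/4}$ for $n$ beyond a small threshold — combined with $\tilf\le|\Delta|^{1/2}$, so that $|\Delta/3|^{1/4}\sigma_0(\tilf)\le C|\Delta|^{1/2}$ with $C$ small; indeed $8\cdot 3^{-1/4}|\Delta|^{1/4}\sigma_0(\tilf)/|\Delta|^{1/2}\to 0$, and for $|\Delta|\ge 10^{14}$ it is well below $3.605-8/3\approx 0.938$. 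One must be a little careful that the sub-threshold values of $\tilf$ where the crude $\sigma_0$ bound fails are handled separately (for those, $\sigma_0(\tilf)$ is an absolute constant and the factor $|\Delta|^{1/4}$ vs.\ $|\Delta|^{1/2}$ wins anyway). Similarly the constant term $4$ in Theorem~\ref{thceps} is kept as $4$, matching the $+4$ in~\eqref{enewboundsimple}, and the $\eps^2$-term's contribution beyond the leading $\log\log$ piece is folded into the $9.83$ coefficient using $\eps^2\le\eps/3$... actually more simply, $\eps\le 1/3$ so $\eps^2\le \eps/3$ is not needed — the secondary part of the first term is itself an $\eps^2$-term and stays with the first term.

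The main obstacle, and the only genuinely delicate point, is the bookkeeping of the numerical constants: one has to verify that at $|\Delta|=10^{14}$ the various "lower-order" contributions are actually small enough to fit under the gaps $9.83-\frac{16}{3}e^\gamma$, $3.605-\frac{8}{3}$, and that $\log\log(|\Delta|^{1/2})=\log\log(10^7)\approx 2.78$ is large enough that dividing the Robin error term $C/\log\log\tilf$ by it, and comparing against the leading term, closes. This requires a concrete explicit version of the $\sigma_1(n)/n$ estimate with a fully numerical constant $C$ and an explicit threshold $n_0$ — exactly the kind of statement furnished by Robin's work — together with an explicit $\sigma_0$ bound; neither is deep, but the inequalities must be chased through honestly. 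Everything else is monotonicity ($\log\log$ increasing), the elementary divisor bound $\tilf\le|\Delta|^{1/2}$, and the hypothesis $\eps\le 1/3$ used to discard nothing but to keep the $\eps$ and $\eps^2$ terms in their natural places.
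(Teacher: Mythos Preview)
Your proposal is correct and follows essentially the same route as the paper: bound $\sigma_1(\tilf)/\tilf$ by a constant times $\log\log(|\Delta|^{1/2})$ (the paper uses the clean inequality $\sigma_1(n)\le 1.842\,n\log\log n$ for $n\ge 121$ from~\cite{AFJ07}, handling $\tilf\le 120$ by the direct check $\sigma_1(\tilf)/\tilf\le 3$, which gives $\tfrac{16}{3}\cdot 1.842<9.83$ on the nose), and bound $\sigma_0(\tilf)$ by a small power of $|\Delta|$ (the paper uses the Nicolas--Robin estimate~\cite{NR83} to get $\sigma_0(\tilf)\le|\Delta|^{0.192}$, so that $8\cdot 3^{-1/4}|\Delta|^{0.442-1/2}\le 0.938$ exactly at $|\Delta|=10^{14}$), absorbing the third term into the second. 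Your identification of the numerical gaps $9.83-\tfrac{16}{3}e^\gamma$ and $3.605-\tfrac83\approx 0.938$ is spot on; the only simplification the paper gains is that the single-term bound $1.842\log\log n$ avoids your two-term Robin estimate and the attendant worry about the $C/\log\log\tilf$ piece going the wrong way for small $\tilf$.
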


\subsection{Some lemmas}
We need some lemmas. For a prime number~$\ell$ and a non-zero integer~$n$ we denote by $\ord_\ell(n)$ the $\ell$-adic order of~$n$; that is, ${\ell^{\ord_\ell(n)}\,\|\,n}$. 

\begin{lemma}
\label{lrootsmodle}
Let~$\ell$ be a prime number, ${e \ge 1}$ an integer, and~$\Delta$ a
non-zero integer with ${\nu=\ord_\ell\Delta}$.
Then the set of ${b\in \Z}$ satisfying 
${b^2\equiv \Delta\bmod \ell^e}$ 
is a union of at most~$2$ residue classes modulo ${\ell^{e-\lfloor
    \min\{e,\nu\}/2\rfloor}}$ in all cases except when ${\ell=2}$ and
${e\ge 3}$; in this latter case it is a union of most~$4$ such
classes. Finally, the set of $b$ equals a single residue class modulo ${\ell^{e-\lfloor
    \min\{e,\nu\}/2\rfloor}}$
if $\nu\ge e$. 
\end{lemma}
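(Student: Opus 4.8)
The plan is to analyze the congruence $b^2 \equiv \Delta \bmod \ell^e$ by factoring out the largest square power of $\ell$ dividing $\Delta$ and reducing to the case of a unit or a near-unit modulus. Write $\nu = \ord_\ell\Delta$ and set $m = \min\{e,\nu\}$. I would first treat the ``easy'' boundary case $\nu \ge e$: then $\Delta \equiv 0 \bmod \ell^e$, so $b^2 \equiv 0 \bmod \ell^e$ forces $\ord_\ell(b) \ge \lceil e/2\rceil$, i.e.\ $b \equiv 0 \bmod \ell^{\lceil e/2\rceil} = \ell^{e - \lfloor e/2\rfloor}$, which is exactly one residue class modulo $\ell^{e-\lfloor \min\{e,\nu\}/2\rfloor}$. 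This disposes of the last sentence and lets me assume $\nu < e$ henceforth, so $m = \nu$.

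Next, for $\nu < e$ I would distinguish the parity of $\nu$. Any solution $b$ must satisfy $\ord_\ell(b^2)\ge\nu$ if $\nu$ is such that the equation is solvable; more precisely $2\ord_\ell(b) = \nu$ when $\nu$ is even (so $b = \ell^{\nu/2}c$ with $c$ a unit mod $\ell^{e-\nu/2}$), and the congruence is simply unsolvable when $\nu$ is odd. Wait — I should be careful: when $\nu$ is odd the set of $b$ is empty, which is vacuously a union of at most $2$ (or $4$) residue classes, so that subcase is fine. When $\nu$ is even, writing $\Delta = \ell^\nu \delta$ with $\ell\nmid\delta$ and $b = \ell^{\nu/2} c$, the congruence becomes $c^2 \equiv \delta \bmod \ell^{e-\nu}$, a congruence with a \emph{unit} right-hand side. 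So it remains to count the solutions $c$ modulo $\ell^{e-\nu}$ and then observe that $b$ ranges over the corresponding classes modulo $\ell^{\nu/2}\cdot\ell^{e-\nu} = \ell^{e-\nu/2} = \ell^{e-\lfloor\min\{e,\nu\}/2\rfloor}$, as claimed.

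The heart of the matter is then the classical fact about square roots of a unit modulo a prime power: for $\ell$ odd and any $k\ge 1$, a unit $u$ has either $0$ or exactly $2$ square roots modulo $\ell^k$ (Hensel lifting from $k=1$, using that $\pm$ are the only square roots of $1$), and for $\ell = 2$ the count is $0$ or $1$ when $k\le 2$ and $0$ or $4$ when $k\ge 3$ (since $(\Z/2^k)^\times \cong \Z/2 \times \Z/2^{k-2}$ has exactly four elements of order dividing $2$). Applying this with $u = \delta$ and $k = e-\nu$ gives at most $2$ classes in the odd case and at most $4$ classes when $\ell=2$; the bookkeeping to translate ``$k = e-\nu \ge 3$'' into the stated hypothesis ``$\ell = 2$ and $e \ge 3$'' needs a moment's care, since if $\ell = 2$, $e \ge 3$, but $e - \nu \le 2$, one must check that the bound ``at most $4$'' still holds trivially (it does, as $4$ dominates $1$ and $2$). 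The main obstacle is precisely this case analysis for $\ell = 2$ — keeping straight the three regimes $e-\nu \in \{1\}$, $\{2\}$, $\{\ge 3\}$ and confirming the modulus $\ell^{e-\lfloor\min\{e,\nu\}/2\rfloor}$ comes out right in each — rather than any deep input; everything else is Hensel's lemma and elementary group theory.
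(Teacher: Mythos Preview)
Your proposal is correct and follows essentially the same approach as the paper's own proof: reduce to the unit case by pulling out $\ell^{\nu/2}$, then count square roots of a unit modulo a prime power via the structure of $(\Z/\ell^k\Z)^\times$. The only cosmetic difference is ordering---the paper treats $\nu=0$ first as a standalone base case and then reduces $0<\nu<e$ to it, whereas you handle $\nu\ge e$ first and then do $\nu<e$ (including $\nu=0$) in one sweep; the content is the same.
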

\begin{proof}
We suppose first that ${\nu=0}$, that is, ${\ell\nmid \Delta}$.  In this case we have to count the number of elements in the multiplicative group $(\Z/\ell^e\Z)^\times$ whose
square is represented by~$\Delta$. If ${\ell\ge 3}$ or ${\ell^e\in \{2,4\}}$, then $(\Z/\ell^e\Z)^\times$ is a cyclic group.
Then there are at most~$2$ square roots and this implies our claim. If ${\ell=2}$ and  ${e \ge 3}$, then ${(\Z/2^e\Z)^\times\cong \Z/2\Z\times \Z/2^{e-2}\Z}$, and there are at most~$4$ square roots, as desired.

Now assume that  ${\nu <e}$. Then ${\ord_\ell(b^2) = \nu}$. So~$\nu$ is even and we can write ${b = \ell^{\nu/2}b'}$, where ${b' \in \Z}$ is coprime to~$\ell$. Now ${\Delta=\ell^\nu\Delta'}$ 
with ${\Delta'\in \Z}$ coprime to~$\ell$,  and ${(b')^2 \equiv \Delta'\bmod \ell^{e-\nu}}$.  Above we already determined
that, depending on the value of $\ell^{e-\nu}$, the set of
possible~$b'$ consists of either at most~$2$ or at most~$4$ classes
modulo ${\ell^{e-\nu}}$. Hence the set of possible ${b =
  \ell^{\nu/2}b'}$ consists of the same number of classes modulo
${\ell^{e-\nu/2}}$, as desired.

To prove the final claim  assume that ${\nu \ge e}$. In this case
${b^2\equiv \Delta\bmod \ell^e}$ 
is equivalent to ${b\equiv0\bmod \ell^{\lceil e/2\rceil}}$. This means
that the set of suitable~$b$ consists of exactly one class  modulo
${\ell^{\lceil e/2\rceil}=\ell^{e-\lfloor e/2\rfloor}}$.
\end{proof}

We say that ${d\in \Z}$ is a \textsl{quadratic divisor} of ${n\in \Z}$ if ${d^2\mid n}$. We denote by ${\gcd_2(m,n)}$ the greatest common quadratic divisor of~$m$ and~$n$. 

\begin{lemma}
\label{lrootsmoda}
Let~$a$ be a positive integer and~$\Delta$ a non-zero integer. Then the set of 
${b\in \Z}$ satisfying 
${b^2 \equiv \Delta \bmod a}$
consists of at most   ${2^{\omega(a/\gcd(a,\Delta))+1}}$ residue classes modulo 
${a/\gcd_2(a,\Delta)}$.
\end{lemma}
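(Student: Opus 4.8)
The plan is to reduce the congruence $b^2 \equiv \Delta \bmod a$ to a coprime congruence by factoring out the part of $a$ that shares a common factor with $\Delta$, then apply the Chinese Remainder Theorem together with the prime-power count from Lemma \ref{lrootsmodle}. Write $a = \prod_{\ell} \ell^{e_\ell}$ over the primes $\ell \mid a$, and for each such $\ell$ set $\nu_\ell = \ord_\ell \Delta$ (finite since $\Delta \neq 0$). By CRT, $b^2 \equiv \Delta \bmod a$ is equivalent to the system $b^2 \equiv \Delta \bmod \ell^{e_\ell}$ for all $\ell \mid a$, and the number of residue classes modulo $a$ of solutions is the product over $\ell$ of the number of residue classes modulo $\ell^{e_\ell}$ of solutions to each local congruence. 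So it suffices to bound each local factor and multiply.

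For each prime $\ell \mid a$, Lemma \ref{lrootsmodle} (applied with $e = e_\ell$, $\nu = \nu_\ell$) tells us that the solution set is a union of at most $2$ residue classes modulo $\ell^{e_\ell - \lfloor \min\{e_\ell,\nu_\ell\}/2\rfloor}$, except when $\ell = 2$ and $e_2 \ge 3$, where it is a union of at most $4$ such classes. The modulus $\ell^{e_\ell - \lfloor \min\{e_\ell,\nu_\ell\}/2\rfloor}$ is exactly $\ell^{e_\ell}$ divided by $\ell^{\lfloor \min\{e_\ell,\nu_\ell\}/2\rfloor}$, and one checks that $\prod_\ell \ell^{\lfloor \min\{e_\ell,\nu_\ell\}/2\rfloor}$ is precisely $\gcd_2(a,\Delta)$, the greatest common quadratic divisor: the $\ell$-adic valuation of $\gcd_2(a,\Delta)$ is the largest $k$ with $\ell^{2k} \mid \gcd(a,\Delta)$, namely $\lfloor \min\{e_\ell, \nu_\ell\}/2\rfloor$. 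Hence the combined modulus is $a/\gcd_2(a,\Delta)$, as claimed.

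It remains to bound the number-of-classes factor $\prod_\ell c_\ell$, where $c_\ell \le 2$ for odd $\ell$ (and for $\ell = 2$ with $e_2 \le 2$) and $c_2 \le 4$ when $e_2 \ge 3$. The primes $\ell$ that contribute a factor strictly larger than $1$ are those for which the local solution set is nonempty and consists of more than one class; in particular $\ell$ must fail to divide the "stable part", i.e. $\ell \nmid a/\gcd(a,\Delta)$ forces $e_\ell \le \nu_\ell$, and by the final assertion of Lemma \ref{lrootsmodle} the solution set is then a \emph{single} class, contributing $c_\ell = 1$. Thus only primes $\ell$ with $\ell \mid a/\gcd(a,\Delta)$ can contribute $c_\ell \ge 2$; there are $\omega(a/\gcd(a,\Delta))$ of them, each contributing at most $2$, except possibly $\ell = 2$ which may contribute $4 = 2^2$ instead of $2$. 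This yields the bound $\prod_\ell c_\ell \le 2^{\omega(a/\gcd(a,\Delta)) + 1}$, the extra $+1$ in the exponent absorbing the doubled contribution at $\ell = 2$. Combining with the modulus computation completes the proof.

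The only genuine subtlety — and the step I would be most careful about — is the bookkeeping that identifies $\prod_\ell \ell^{\lfloor \min\{e_\ell,\nu_\ell\}/2\rfloor}$ with $\gcd_2(a,\Delta)$ and simultaneously confirms that the single-class case of Lemma \ref{lrootsmodle} covers exactly the primes dividing $\gcd(a,\Delta)$ to a power at least matching $a$; once these valuations are lined up correctly, everything else is a direct multiplication of the local bounds.
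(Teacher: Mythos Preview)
Your proof is correct and follows essentially the same route as the paper: reduce to prime powers via CRT, invoke Lemma~\ref{lrootsmodle} at each prime, identify the product of the local moduli with $a/\gcd_2(a,\Delta)$, and observe that only primes dividing $a/\gcd(a,\Delta)$ can contribute more than one class (with the extra factor $2$ at $\ell=2$). The paper compresses the counting step into the single observation that the local count at $\ell^e$ is at most $2^{\omega(\ell^e/\gcd(\ell^e,\Delta))}$ (respectively $2^{\omega(\ell^e/\gcd(\ell^e,\Delta))+1}$ when $\ell=2$), which is exactly the case distinction you spell out; your version simply makes the valuation bookkeeping more explicit.
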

\begin{proof}
  For a prime power $\ell^e$ 
  we only need the following simple consequence of 
  Lemma~\ref{lrootsmodle} on the number of residue classes counted there.
This number is at
most $2^{\omega(\ell^e/\gcd(\ell^e,\Delta))}$ if $\ell\ge 3$
and at most $2^{\omega(\ell^e/\gcd(\ell^e,\Delta))+1}$ for $\ell=2$.
  The current lemma follows from the Chinese
Remainder Theorem.
\end{proof}

The  following lemma is trivial, but we state it here because it is our principal counting tool.  

\begin{lemma}
\label{ltrivi}
Let~$\alpha$ and~$\beta$ be real numbers, ${\alpha<\beta}$, and~$m$ a positive integer. Then every residue class modulo~$m$ has at most ${(\beta-\alpha)/m+1}$ elements in the interval ${[\alpha,\beta]}$.
\end{lemma}

Given a negative integer ${\Delta\equiv0,1\bmod 4}$, denote by ${T=T_\Delta}$ the set of triples of integers $(a,b,c)$ such that 
\begin{equation}
\label{ekuh}
\begin{aligned}
&\gcd(a,b,c)=1, \quad \Delta=b^2-4ac,\\
&\text{either\quad $-a < b \le a < c$\quad or\quad $0 \le b \le a = c$.}
\end{aligned}
\end{equation}
For  ${(a,b,c)\in T_\Delta}$ we set
$$
\tau(a,b,c)=\frac{b+\sqrt{\Delta}}{2a}.
$$
\begin{lemma}
\label{lgauss}
\begin{enumerate}[label={(\roman*)}]
\item
\label{ifund}
For every ${(a,b,c)\in T_\Delta}$ the number 
$\tau(a,b,c)$
belongs to the standard fundamental domain. 

\item
\label{iadelta}
For ${(a,b,c)\in T_\Delta}$ we have ${0<a\le |\Delta/3|^{1/2}}$, the equality being possible only if ${\Delta=-3}$ (and ${a=b=c=1}$). We also have ${c\ge |\Delta|^{1/2}/2}$. 

\item
\label{icl}
The map
${(a,b,c)\mapsto j(\tau(a,b,c))}$
defines a bijection from $T_\Delta$ onto the set of $\Q$-conjugates
of $j(\tau)$. 
In particular, ${\CC(\Delta)=|T_\Delta|}$.
\end{enumerate}
\end{lemma}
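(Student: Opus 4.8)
The plan is to treat Lemma~\ref{lgauss} as a repackaging of Gauss's reduction theory of binary quadratic forms: parts~\ref{ifund} and~\ref{iadelta} will follow from elementary computations with $\tau(a,b,c)$, while part~\ref{icl} will be extracted from the classical form–ideal dictionary together with a cardinality count.

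For~\ref{ifund}, I would write $\tau=\tau(a,b,c)=\frac{b+\sqrt\Delta}{2a}$ and, using $\sqrt\Delta=i|\Delta|^{1/2}$, record $\Im\tau=|\Delta|^{1/2}/(2a)>0$, $\Re\tau=b/(2a)$, and, crucially, $|\tau|^2=(b^2+|\Delta|)/(4a^2)=c/a$ via $\Delta=b^2-4ac$. The conditions in~\eqref{ekuh} then translate directly: $-a<b\le a$ gives $-\tfrac12<\Re\tau\le\tfrac12$; $a\le c$ gives $|\tau|\ge 1$; and the two boundary situations allowed in~\eqref{ekuh} — namely $b=a$ (forcing $\Re\tau=\tfrac12$, so $\tau$ lies on the ray $[\zeta_6,i\infty)$) and $a=c$ with $b\ge 0$ (forcing $|\tau|=1$, $\Re\tau\ge 0$, so $\tau$ lies on the arc $[i,\zeta_6]$) — are exactly the portions of $\partial\FF$ that the definition of $\FF$ includes. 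Hence $\tau\in\FF$.

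For~\ref{iadelta}, I would argue purely algebraically from $|b|\le a\le c$ (which is part of~\eqref{ekuh}): then $|\Delta|=4ac-b^2\ge 4a^2-a^2=3a^2$, giving $0<a\le|\Delta/3|^{1/2}$, with equality forcing $a=c$ and $|b|=a$, hence $a=b=c$, and primitivity then gives $a=b=c=1$, $\Delta=-3$; likewise $|\Delta|=4ac-b^2\le 4ac\le 4c^2$, so $c\ge|\Delta|^{1/2}/2$. (Alternatively, $a\le|\Delta/3|^{1/2}$ drops out of~\ref{ifund}, since every point of $\FF$ has imaginary part $\ge\Im\zeta_6=\sqrt3/2$.) For~\ref{icl}, the key input is the classical bijection (Gauss; see \cite[Ch.~2, \S3.A, Ch.~13]{Cox}) between proper $\mathrm{SL}_2(\Z)$-equivalence classes of primitive positive-definite integral forms of discriminant $\Delta$ and the singular moduli of discriminant $\Delta$ — the latter forming a single Galois orbit of size $\CC(\Delta)$ as recalled above — under which $(a,b,c)$ corresponds to $j$ of the lattice $\Z+\Z\tau(a,b,c)$. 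I would first note that the inequalities in~\eqref{ekuh} are precisely Gauss's reducedness conditions (up to the harmless relabeling $b\leftrightarrow -b$ matching our right-closed $\FF$), so $T_\Delta$ contains exactly one representative of each form class and therefore $|T_\Delta|=\CC(\Delta)$; next, that $(a,b,c)\mapsto\tau(a,b,c)$ is visibly injective ($\Im\tau$ determines $a$, then $\Re\tau$ determines $b$, then $c=(b^2-\Delta)/(4a)$), so together with~\ref{ifund} and the injectivity of $j$ on $\FF$ the map $(a,b,c)\mapsto j(\tau(a,b,c))$ is injective; finally, since each $j(\tau(a,b,c))$ is a singular modulus of discriminant $\Delta$ (as $\Z+\Z\tau(a,b,c)$ is a proper fractional $\OO_\Delta$-ideal) and there are exactly $\CC(\Delta)=|T_\Delta|$ of those, an injective map between finite sets of equal size with image inside the target is a bijection, which also re-proves $\CC(\Delta)=|T_\Delta|$.

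I expect the only genuine obstacle to be the bookkeeping in~\ref{icl}: aligning the exact shape of the reducedness inequalities in~\eqref{ekuh} — which are tailored to the paper's right-closed fundamental domain — with the conventions of the reference, and verifying that the degenerate/edge forms (those with $|b|=a$ or $a=c$) are counted exactly once on both sides. Everything else amounts to short direct computations.
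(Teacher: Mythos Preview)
Your proof is correct and follows essentially the same approach as the paper. Parts~\ref{ifund} and~\ref{iadelta} are argued identically (the paper writes the single chain $4c^2\ge|\Delta|=4ac-b^2\ge 3a^2$ and handles the equality case via $\gcd(a,b,c)=1$ just as you do), and for~\ref{icl} the paper simply defers to the classical form--ideal correspondence in \cite{Cox} and \cite[Proposition~2.5]{BLP16}, whereas you spell out the injectivity-plus-cardinality argument explicitly --- but this is exactly the content of those references, so there is no real divergence.
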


\begin{proof}
For item~\ref{ifund} just note that~\eqref{ekuh} implies the inequalities 
$$
-\frac12 <\frac{b}{2a}\le \frac12, \qquad \frac{b^2+|\Delta|}{4a^2}\ge 1
$$
and that the second one becomes equality only when ${a=c}$, in which case  ${b\ge 0}$. 
For item~\ref{iadelta}, 
since ${|b|\le a\le c}$,  we have 
$$
4c^2\ge |\Delta|=4ac-b^2\ge 4a^2-a^2=3a^2. 
$$
with equality on the right only when ${a=|b|=c}$. Since ${\gcd(a,b,c)=1}$, this is only possible when ${a=b=c=1}$ and ${\Delta=-3}$. 

Item~\ref{icl} is a combination of several classical results that can be found, for instance, in~\cite{Cox}. See \cite[Proposition~2.5]{BLP16} for more details. 
\end{proof}

\begin{lemma}
\label{lshort}
Let ${\eps\in (0,1/3]}$ and let ${(a,b,c)\in T_\Delta}$ satisfy~\eqref{etauzeze}. Then 
\begin{align}
\label{eboundinga}
\frac{|\Delta|^{1/2}}{\sqrt3+2\eps}&<\hphantom{|}a\hphantom{|}\le \frac{|\Delta|^{1/2}}{\sqrt3},\\
\label{eboundingb}
a(1-2\eps)&<|b|\le a,\\
\label{eboundingc}
a&\le \hphantom{|}c\hphantom{|}<a(1+{\sqrt3}\eps+\eps^2).
\end{align}
\end{lemma}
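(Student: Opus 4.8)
The plan is to convert the hypothesis $\min\{|\tau-\zeta_3|,|\tau-\zeta_6|\}<\eps$ on $\tau=\tau(a,b,c)=\frac{b+\sqrt\Delta}{2a}$ into inequalities on the coordinates of $\tau$. Since $\Delta<0$ we have $\Re\tau=\frac{b}{2a}$ and $\Im\tau=\frac{|\Delta|^{1/2}}{2a}$, and from $\Delta=b^2-4ac$ one obtains the identity $|\tau|^2=\frac{b^2+|\Delta|}{4a^2}=\frac{c}{a}$, which is the source of \eqref{eboundingc}. Let $\zeta\in\{\zeta_3,\zeta_6\}$ be such that $|\tau-\zeta|<\eps$; in either case $\zeta=\frac{\sigma}{2}+\frac{\sqrt3}{2}i$ with $\sigma\in\{-1,1\}$ and $|\zeta|=1$. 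Writing $\tau-\zeta=u+vi$ with $u,v\in\R$, the hypothesis reads $u^2+v^2<\eps^2$, so in particular $|u|<\eps$ and $|v|<\eps$, where $u=\frac{b}{2a}-\frac{\sigma}{2}$ and $v=\frac{|\Delta|^{1/2}}{2a}-\frac{\sqrt3}{2}$.

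The bounds \eqref{eboundinga} and \eqref{eboundingb} then follow by routine manipulation. From $|v|<\eps$ we get $\frac{\sqrt3}{2}-\eps<\frac{|\Delta|^{1/2}}{2a}<\frac{\sqrt3}{2}+\eps$, and the right-hand inequality gives $a>\frac{|\Delta|^{1/2}}{\sqrt3+2\eps}$; the matching upper bound $a\le|\Delta|^{1/2}/\sqrt3=|\Delta/3|^{1/2}$ is exactly Lemma~\ref{lgauss}\ref{iadelta}. From $|u|<\eps$ we get $|b-\sigma a|<2a\eps$, that is, $\sigma a-2a\eps<b<\sigma a+2a\eps$; since $\eps\le\frac13$ this forces $\sigma b=|b|>a(1-2\eps)$, which combined with $|b|\le a$ (built into \eqref{ekuh}) yields \eqref{eboundingb}.

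The only step requiring an idea is \eqref{eboundingc}. Expanding $\frac{c}{a}=|\tau|^2=\bigl|\zeta+(u+vi)\bigr|^2$ with $|\zeta|=1$, $\Re\zeta=\frac{\sigma}{2}$, $\Im\zeta=\frac{\sqrt3}{2}$ gives $\frac{c}{a}=1+\sigma u+\sqrt3\,v+(u^2+v^2)$, so it suffices to prove $\sigma u+\sqrt3\,v<\sqrt3\,\eps$. The Cauchy--Schwarz inequality only yields $\sigma u+\sqrt3\,v\le 2\sqrt{u^2+v^2}<2\eps$, producing the coefficient $2$ in place of the required $\sqrt3$; the sharpening must exploit the sign of $\sigma u$. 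Since $(a,b,c)\in T_\Delta$ forces $-a<b\le a$, we have $\Re\tau=\frac{b}{2a}\in(-\frac12,\frac12]$, and hence $\sigma u\le 0$ whichever value $\sigma$ takes. Then $\sigma u+\sqrt3\,v\le\sqrt3\,v\le\sqrt3\,|v|\le\sqrt3\,\sqrt{u^2+v^2}<\sqrt3\,\eps$, the first inequality using $\sigma u\le 0$. Feeding this and $u^2+v^2<\eps^2$ into the formula for $c/a$ gives $c<a(1+\sqrt3\,\eps+\eps^2)$, while $c\ge a$ is again immediate from \eqref{ekuh}. I expect this sign observation — recognizing that the fundamental-domain constraint $-a<b\le a$, i.e.\ $\Re\tau\le\frac12$ (resp.\ $\Re\tau>-\frac12$), is precisely what improves the coefficient $2$ to $\sqrt3$ — to be the only genuine obstacle; everything else is bookkeeping.
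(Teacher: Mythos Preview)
Your proof is correct. Parts \eqref{eboundinga} and \eqref{eboundingb} coincide with the paper's argument. For \eqref{eboundingc}, however, you take a different and more laborious route than the paper. You expand $c/a=|\tau|^2=|\zeta+(u+vi)|^2=1+\sigma u+\sqrt3\,v+(u^2+v^2)$ and then invoke the sign observation $\sigma u\le 0$ (coming from $|\Re\tau|\le\tfrac12$) to obtain the coefficient $\sqrt3$. The paper instead just feeds the already-established bounds back into the relation $4ac=|\Delta|+b^2$: from \eqref{eboundinga} one has $|\Delta|<a^2(\sqrt3+2\eps)^2$, and from \eqref{ekuh} one has $b^2\le a^2$, so
\[
4ac=|\Delta|+b^2<a^2(\sqrt3+2\eps)^2+a^2=4a^2(1+\sqrt3\,\eps+\eps^2),
\]
which gives \eqref{eboundingc} in one line. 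Thus what you flagged as ``the only genuine obstacle'' disappears entirely in the paper's approach; your geometric expansion is valid but unnecessary, since the bound on $a$ already encodes the sharpening you extract from the sign of $\sigma u$.
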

Note that~\eqref{eboundinga} and~\eqref{eboundingb} will be used already in Subsection~\ref{ssproofthceps}, while~\eqref{eboundingc} will be used only in Section~\ref{slowrange}.

\begin{proof}
For ${\tau\in \FF}$ condition~\eqref{etauzeze} implies that 
$$
\frac{\sqrt3}2\le\Im\tau<\frac{\sqrt3}2+\eps, \qquad 
\frac12-\eps<|\Re\tau|\le \frac12.
$$
Applying this for ${\tau=\tau(a,b,c)}$, we obtain~\eqref{eboundinga} and~\eqref{eboundingb}. To prove~\eqref{eboundingc}, write
$$
4ac= |\Delta|+b^2 < a^2(\sqrt3+2\eps)^2+a^2=4a^2(1+\sqrt3\eps+\eps^2), 
$$ 
and~\eqref{eboundingc} follows. 
\end{proof}

\subsection{Proof of Theorem~\ref{thceps}}
\label{ssproofthceps}

Note that, by definition, 
$$
\CC_\eps(\Delta) = \#\{\text{$(a,b,c)\in T_\Delta$~: ${\tau=\tau(a,b,c)}$ satisfies~\eqref{etauzeze}}\}. 
$$
Setting 
$$
I=\left(\frac{|\Delta|^{1/2}}{\sqrt3+2\eps}, \frac{|\Delta|^{1/2}}{\sqrt3}\right], 
$$
for ${\tau=\tau(a,b,c)}$ with ${(a,b,c)\in T_\Delta}$ we may re-write~\eqref{eboundinga} and~\eqref{eboundingb} as   
\begin{align}
\label{eainibin}
a\in I,\qquad b\in [-a,-a(1-2\eps))\cup (a(1-2\eps),a]. 
\end{align}
Since~$c$ is uniquely determined for given~$a$,~$b$ and~$\Delta$, it suffices to bound the number of  pairs $(a,b)$ of integers satisfying ${b^2\equiv \Delta\bmod a}$ and~\eqref{eainibin}. 

For every fixed~$a$ there are at most
${(4\eps\gcd_2(a,\Delta)+2) 2^{\omega(a)+1}}$ suitable~$b$, as follows from Lemmas~\ref{lrootsmoda} and~\ref{ltrivi};
indeed, ${\omega(a/\gcd(a,\Delta))\le \omega(a)}$. Hence
\begin{align}
\label{estim0}
\CC_\eps(\Delta) &\le 8\eps\sum_{a\in I\cap\Z} {\gcd}_2(a,\Delta) 2^{\omega(a)} + 4\sum_{a\in I\cap\Z}2^{\omega(a)}\\ 
\label{estim1}
&\le  8\eps F\sum_{a\in I\cap\Z} {\gcd}_2(a,\Delta) +4F\#(I\cap\Z).
\end{align}
To estimate the sum, note that 
\begin{equation}
\label{estim2}
\sum_{a\in I\cap\Z} {\gcd}_2(a,\Delta)\le \sum_{d^2\mid \Delta}d\cdot \#(I\cap d^2\Z). 
\end{equation}
Recall that we defined in~\eqref{etilf} the modified conductor~$\tilf$. 
Since $\Delta/\tilf^2$  is a square-free integer, we have ${d^2\mid \Delta}$ if and only if ${d\mid \tilf}$. Also, since~$I$ is of length 
$$
|\Delta|^{1/2}\left(\frac1{\sqrt3}-\frac1{\sqrt3+2\eps}\right)<\frac23|\Delta|^{1/2}\eps,
$$ 
we have, by Lemma~\ref{ltrivi},
$$
\#(I\cap d^2\Z) \le 
\begin{cases}
\frac23\frac{|\Delta|^{1/2}}{d^2}\eps +1, & d \le {|\Delta/3|^{1/4}}, \\
0,& d >{|\Delta/3|^{1/4}}. 
\end{cases}
$$
Hence
\begin{align}
\sum_{d^2\mid \Delta}d\cdot \#(I\cap d^2\Z) &\le \sum_{\genfrac{}{}{0pt}{}{d\mid\tilf}{d\le|\Delta/3|^{1/4}}}d\left(\frac23\frac{|\Delta|^{1/2}}{d^2}\eps+1\right)\nonumber\\
&\le \frac23|\Delta|^{1/2}\eps\sum_{d\mid\tilf}d^{-1}+ \sum_{\genfrac{}{}{0pt}{}{d\mid\tilf}{d\le |\Delta/3|^{1/4}}}d\nonumber\\
\label{estim3}
&\le \frac23\frac{\sigma_1(\tilf)}{\tilf}|\Delta|^{1/2}\eps+ |\Delta/3|^{1/4}\sigma_0(\tilf). 
\end{align}
Finally, Lemma~\ref{ltrivi} implies that
\begin{equation}
\label{eintini}
\#(I\cap\Z)\le \frac23|\Delta|^{1/2}\eps+1. 
\end{equation}
Putting  the estimates~\eqref{estim1},~\eqref{estim2},~\eqref{estim3} and~\eqref{eintini} together, we obtain~\eqref{enewbound}. \qed

\subsection{Proof of Corollary~\ref{cseps}}

We need to estimate  $\sigma_0(\tilf)$ and $\sigma_1(\tilf)$ in terms of~$|\Delta|$.  The following lemma uses a simple estimate for $\sigma_0(n)$ due to Nicolas and Robin~\cite{NR83}. Much sharper  estimates can be found in Robin's thesis~\cite{Ro83a}. 

\begin{lemma}
\label{lsigzersimple}
For ${|\Delta|\ge 10^{14}}$ we have 
\begin{align}
\label{euppersigzer}
\sigma_0(\tilf)&\le |\Delta|^{0.192},\\
\label{euppersigone}
{\sigma_1(\tilf)}/\tilf &\le 1.842\log\log(|\Delta|^{1/2}). 
\end{align}
\end{lemma}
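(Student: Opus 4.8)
The plan is to prove both inequalities~\eqref{euppersigzer} and~\eqref{euppersigone} by reducing them to known explicit estimates for the arithmetic functions $\sigma_0$ and $\sigma_1$, keeping careful track of the relation between $\tilf$ and $|\Delta|$. The starting point is the observation that $\tilf^2 \mid \Delta$ up to a square-free factor: more precisely $\Delta = (\Delta/\tilf^2)\tilf^2$ with $\Delta/\tilf^2$ square-free, so in particular $\tilf^2 \le |\Delta|$, i.e.\ $\tilf \le |\Delta|^{1/2}$. Thus it suffices to bound $\sigma_0(n)$ and $\sigma_1(n)/n$ for a positive integer $n \le |\Delta|^{1/2}$, and the claimed bounds will follow once we plug in $n = \tilf$ and use $\log\log n \le \log\log(|\Delta|^{1/2})$.

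For~\eqref{euppersigzer}, I would invoke the explicit divisor bound of Nicolas and Robin~\cite{NR83}, which gives an inequality of the shape $\sigma_0(n) \le n^{1.066\log 2/\log\log n}$ valid for all $n \ge 3$ (with the precise constant as stated there). Writing $n = \tilf \le |\Delta|^{1/2}$, the exponent $1.066(\log 2)/\log\log \tilf$ is a decreasing function of $\tilf$, so it is maximised when $\tilf$ is as small as allowed; but since we want a bound in terms of $|\Delta|$ alone, the cleanest route is: $\sigma_0(\tilf) \le \tilf^{1.066(\log 2)/\log\log\tilf} \le |\Delta|^{1/2 \cdot 1.066(\log 2)/\log\log\tilf}$ — no, this is backwards since larger $\tilf$ helps. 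Let me instead argue directly on $n$: the function $g(x) = \log \sigma_0 \text{-bound}$ shows $\sigma_0(n) \le n^{c/\log\log n}$, and since $n \le |\Delta|^{1/2}$ and $|\Delta| \ge 10^{14}$ forces $\log\log n$ to be bounded below only when $n$ itself is large, I instead split: for $n$ small (say $n \le N_0$ for an explicit threshold) one checks $\sigma_0(n) \le |\Delta|^{0.192}$ trivially since $\sigma_0(n) \le 2\sqrt n$ and $|\Delta|^{0.192} \ge 10^{2.688}$; for $n$ in the remaining range $N_0 < n \le |\Delta|^{1/2}$ the Nicolas–Robin exponent $1.066(\log 2)/\log\log n < 0.384$, hence $\sigma_0(n) \le n^{0.384} \le (|\Delta|^{1/2})^{0.384} = |\Delta|^{0.192}$. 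Choosing $N_0$ so that both regimes are covered (it will be a modest constant) finishes this part.

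For~\eqref{euppersigone}, I would use the classical explicit form of the theorem of Gronwall/Robin on the sum-of-divisors function: $\sigma_1(n)/n \le e^\gamma \log\log n + \frac{0.6483}{\log\log n}$ for $n \ge 3$ (Robin's inequality in the unconditional explicit form), where $\gamma$ is Euler's constant and $e^\gamma \approx 1.781$. Again setting $n = \tilf \le |\Delta|^{1/2}$ and using that the right-hand side is essentially increasing in $n$ for $n$ past a small threshold, one gets $\sigma_1(\tilf)/\tilf \le e^\gamma \log\log(|\Delta|^{1/2}) + \frac{0.6483}{\log\log(|\Delta|^{1/2})}$. Since $|\Delta| \ge 10^{14}$ gives $\log\log(|\Delta|^{1/2}) = \log(\tfrac12 \cdot 14\log 10) = \log(7\log 10) \approx 2.785$, the error term $0.6483/2.785 \approx 0.233$ is at most $(1.842 - e^\gamma)\log\log(|\Delta|^{1/2})$, so the whole expression is $\le 1.842\log\log(|\Delta|^{1/2})$. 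For the small-$\tilf$ cases (where $\log\log\tilf$ is tiny or the Robin bound is not directly applicable, e.g.\ $\tilf \le 2$) one checks directly that $\sigma_1(\tilf)/\tilf \le 3 \le 1.842\log\log(10^7)$.

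The main obstacle is purely bookkeeping: one must handle the small values of $\tilf$ separately (where $\log\log\tilf$ is undefined, zero, or negative, and the asymptotic-flavoured explicit bounds degrade), and one must verify that the monotonicity arguments replacing $\tilf$ by $|\Delta|^{1/2}$ genuinely go in the favourable direction — in particular that $x \mapsto e^\gamma\log\log x + 0.6483/\log\log x$ is increasing for $x$ beyond the relevant threshold, and that the Nicolas–Robin exponent is below $0.384$ for all $n$ in the truncated range. All of this is elementary calculus plus a handful of numerical inequalities; no conceptual difficulty arises, which is why the lemma is stated without much fanfare as a preparatory step.
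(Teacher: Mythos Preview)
Your treatment of~\eqref{euppersigzer} is essentially the paper's argument: the paper also invokes the Nicolas--Robin bound $\log\sigma_0(n)/\log 2 \le 1.538(\log n)/\log\log n$, uses $\tilf\le|\Delta|^{1/2}$, and handles small~$\tilf$ trivially. The paper's bookkeeping is slightly cleaner --- it uses monotonicity of $(\log x)/\log\log x$ for $x\ge 16$ to replace~$\tilf$ by $|\Delta|^{1/2}$ directly, then bounds $\log\log(|\Delta|^{1/2})\ge\log\log(10^7)$ --- but your split into small and large~$n$ achieves the same end.

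For~\eqref{euppersigone}, however, your approach has a genuine numerical gap. With Robin's unconditional inequality $\sigma_1(n)/n \le e^\gamma\log\log n + 0.6483/\log\log n$ and $e^\gamma\approx 1.7811$, the assertion you need at $|\Delta|=10^{14}$ is that $0.6483/\log\log(10^7)\le(1.842-e^\gamma)\log\log(10^7)$. But $\log\log(10^7)\approx 2.780$, so the left side is about $0.233$ while the right side is about $0.0609\cdot 2.780\approx 0.169$; the inequality is false. Robin's constant $0.6483$ is simply not tight enough to reach the coefficient $1.842$ until $\log\log n$ exceeds roughly $3.26$, i.e.\ $n$ is around $2\cdot10^{11}$, far beyond $|\Delta|^{1/2}\le 10^{7.5}$.

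The paper avoids this by citing a sharper explicit bound from Akbary--Friggstad--Juricevic~\cite{AFJ07}, namely $\sigma_1(n)\le 1.842\,n\log\log n$ for all $n\ge 121$, which gives~\eqref{euppersigone} immediately; the range $\tilf\le 120$ is then checked directly via $\sigma_1(\tilf)/\tilf\le 3$. You should either cite this stronger input, or replace the Robin route by a direct verification up to $\tilf\le|\Delta|^{1/2}$ in the relevant range (which is feasible but defeats the purpose of a clean lemma).
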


\begin{proof}
For proving~\eqref{euppersigzer}  may assume that ${\tilf\ge 16}$, otherwise there is nothing to prove. In~\cite{NR83} it is proved that for ${n\ge 3}$ we have
$$
\frac{\log\sigma_0(n)}{\log2} \le 1.538\frac{\log n}{\log\log n}.
$$
The function ${x\mapsto (\log x)/(\log\log x)}$ is increasing for ${x\ge 16}$. Since 
$$
|\Delta|\ge 10^{14}, \qquad 16\le \tilf\le |\Delta|^{1/2},
$$ 
this gives
\begin{align*}
\log\sigma_0(\tilf) 
&\le 1.538\log 2\frac{\log(|\Delta|^{1/2})}{\log\log(|\Delta|^{1/2})}\\
&\le \frac{1.538}2\log 2\frac{\log|\Delta|}{\log\log(10^{7})}\\
&<0.192\log|\Delta|,
\end{align*}
as wanted. 

For proving~\eqref{euppersigone} we use the estimate 
${\sigma_1(n)\le 1.842n\log\log n}$
which holds for ${n\ge 121}$, see \cite[Theorem~1.3]{AFJ07}. This
proves~\eqref{euppersigone} for ${\tilf\ge 121}$. For ${\tilf\le 120}$
one can check directly that  ${\sigma_1(\tilf)/\tilf\le 3}$ so that inequality~\eqref{euppersigone} is also
true in this case.
\end{proof}

\begin{proof}[Proof of Corollary~\ref{cseps}]
If ${|\Delta|\ge 10^{14}}$ then Lemma~\ref{lsigzersimple} implies that 
\begin{align*}
&8\left|\frac\Delta3\right|^{1/4}\sigma_0(\tilf) \le \frac8{3^{1/4}}|\Delta|^{0.442}\le \frac8{3^{1/4}\cdot10^{0.812}}|\Delta|^{1/2}\le 0.938|\Delta|^{1/2},\\
&\frac{16}3\frac{\sigma_1(\tilf)}{\tilf} \le \frac{16}3\cdot 1.842 \log\log(|\Delta|^{1/2})\le9.83 \log\log(|\Delta|^{1/2}).
\end{align*}
Substituting all this to~\eqref{enewbound}, we obtain~\eqref{enewboundsimple}. 
\end{proof}


\section{An upper bound for the height of a singular unit}
\label{supper}

In this section we obtain a fully explicit version of estimate~\eqref{eupperheight}.
We use the notation $\CC(\Delta)$,  $\CC_\eps(\Delta)$, $\FF$, $\zeta_3$, $\zeta_6$ introduced in Section~\ref{sceps}. 

Let~$\alpha$ be a complex algebraic number of degree~$m$ whose minimal polynomial over~$\Z$ is 
$$
P(x)=a_mx^m+\cdots+a_0=a_m(x-\alpha_1)\cdots (x-\alpha_m) \in \Z[x].
$$
Here ${\gcd(a_0,a_1, \ldots, a_m)=1}$ and ${\alpha_1, \ldots, \alpha_m\in \C}$ are the conjugates of~$\alpha$ over~$\Q$. Then the height of~$\alpha$ is defined by
$$
\height(\alpha) =\frac1m\left(\log|a_m|+\sum_{k=1}^m \log^+|\alpha_k|\right), 
$$ 
where ${\log^+(\cdot)=\log\max\{1,\cdot\}}$. 
If~$\alpha$ is an algebraic integer then 
$$
\height(\alpha) =\frac1m\sum_{k=1}^m \log^+|\alpha_k|. 
$$ 
It is known that ${\height(\alpha)=\height(\alpha^{-1})}$ when ${\alpha\ne 0}$. 

\begin{theorem}
\label{theceps}
Let~$\alpha$ be a singular unit of discriminant~$\Delta$, and~$\eps$ a real number satisfying ${0<\eps \le 4\cdot10^{-3}}$. 
Then 
\begin{equation}
\label{eupperheightbis}
\height(\alpha)\le 3\frac{\CC_\eps(\Delta)}{\CC(\Delta)}\log|\Delta|+3\log(\eps^{-1})-10.66. 
\end{equation}
\end{theorem}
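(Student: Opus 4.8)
The plan is to use that $\alpha$ is a unit, so that $\alpha^{-1}$ is again an algebraic integer with $\height(\alpha)=\height(\alpha^{-1})$. Since $\alpha$ is a singular modulus of discriminant $\Delta$, its full set of $\Q$-conjugates is $\{\,j(\tau(a,b,c)):(a,b,c)\in T_\Delta\,\}$ by Lemma~\ref{lgauss}\,\ref{icl}, so computing $\height(\alpha^{-1})$ from these conjugates yields the identity
$$
\height(\alpha)=\frac1{\CC(\Delta)}\sum_{\substack{(a,b,c)\in T_\Delta\\ |j(\tau(a,b,c))|<1}}\log\frac1{|j(\tau(a,b,c))|},
$$
where we may assume $\Delta\ne-3$ (for $\Delta=-3$ the number $\alpha=0$ is not a unit). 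Only those $\tau\in T_\Delta$ lying close to the zeros $\zeta_3,\zeta_6$ of $j$ in $\overline{\FF}$ contribute, and the whole task is to bound this sum from above.

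The first ingredient is an explicit local estimate for $|j(\tau)|$ near $\zeta_3$, hence --- as $j$ is $1$-periodic and $\zeta_6=\zeta_3+1$ --- also near $\zeta_6$. Because $E_4$ has a simple zero at $\zeta_3$ whereas $\eta^{24}$ does not, $j=E_4^3/\eta^{24}$ satisfies $j(\tau)=Aw^3(1+O(w))$ with $w=\tau-\zeta_3$ and $A=E_4'(\zeta_3)^3/\eta(\zeta_3)^{24}$; using Ramanujan's identity $\tfrac1{2\pi i}E_4'=\tfrac13(E_2E_4-E_6)$ together with $E_4(\zeta_3)=0$ and $E_6(\zeta_3)^2=-1728\,\eta(\zeta_3)^{24}$, this constant evaluates to $A=-512\pi^3 i\,E_6(\zeta_3)$, so $\log|A|=\log(512\pi^3|E_6(\zeta_3)|)\approx10.73$. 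Making the $O(w)$ term effective --- for instance by Cauchy estimates $|c_n|\le M(r)/r^n$ for the Taylor coefficients of $j$ at $\zeta_3$ on a disc $|w|\le r$ with $r<\tfrac{\sqrt3}{2}$ --- I want an explicit threshold $\eps_0$ (one may take $\eps_0\approx0.028$) and a small explicit constant $c_2$ such that $|j(\tau)|<1$ forces $r(\tau):=\min\{|\tau-\zeta_3|,|\tau-\zeta_6|\}<\eps_0$ and such that, for every $\tau\in\FF$ with $r(\tau)<\eps_0$,
$$
\log\frac1{|j(\tau)|}\le3\log\frac1{r(\tau)}-\log|A|+c_2.
$$

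The second ingredient is arithmetic: for $\tau=\tau(a,b,c)$ with $(a,b,c)\in T_\Delta$ one has $r(\tau)\ge\tfrac{\sqrt3}{2}|\Delta|^{-1/2}$. Indeed $\Re(\tau-\zeta_3)=\tfrac{a+b}{2a}$ and $\Re(\tau-\zeta_6)=\tfrac{b-a}{2a}$; by~\eqref{ekuh} the integers $a+b$ and $b-a$ are nonzero --- so at least $1$ in absolute value --- unless $b=a$, and together with $a\le|\Delta/3|^{1/2}$ from Lemma~\ref{lgauss}\,\ref{iadelta} this gives $|\Re|\ge\tfrac1{2a}\ge\tfrac{\sqrt3}{2}|\Delta|^{-1/2}$. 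The boundary case $b=a$, in which $\tau-\zeta_6$ is purely imaginary, is treated separately via $|\Delta|-3a^2=4a(c-a)$ and $c>a$ (valid since $\Delta\ne-3$).

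Finally I assemble the estimate. Split the sum according to $r(\tau)<\eps$ versus $r(\tau)\ge\eps$. The first part has at most $\CC_\eps(\Delta)$ terms (by the very definition of $\CC_\eps(\Delta)$), and each is, by the two displayed estimates, at most $3\log\tfrac1{r(\tau)}-\log|A|+c_2\le\tfrac32\log|\Delta|+3\log\tfrac2{\sqrt3}-\log|A|+c_2\le\tfrac32\log|\Delta|$, since $3\log\tfrac2{\sqrt3}-\log|A|+c_2<0$; so this part is at most $\tfrac32\CC_\eps(\Delta)\log|\Delta|\le3\CC_\eps(\Delta)\log|\Delta|$. The remaining terms have $|j(\tau)|<1$ (so $r(\tau)<\eps_0$) and $r(\tau)\ge\eps$, so there are at most $\CC(\Delta)$ of them and each is at most $3\log(\eps^{-1})-\log|A|+c_2$; using $\eps\le4\cdot10^{-3}$ (which makes this bracket positive) this part is at most $\CC(\Delta)(3\log(\eps^{-1})-\log|A|+c_2)$. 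Adding the two parts and dividing by $\CC(\Delta)$ gives
$$
\height(\alpha)\le3\frac{\CC_\eps(\Delta)}{\CC(\Delta)}\log|\Delta|+3\log(\eps^{-1})-(\log|A|-c_2),
$$
which is exactly~\eqref{eupperheightbis} provided $\log|A|-c_2\ge10.66$. Since $\log|A|\approx10.73$, the crux of the proof --- and its only delicate point --- is to control the error in the cubic approximation of $j$ at $\zeta_3$ sharply enough on the disc $r(\tau)<\eps_0$ to afford $c_2\le\log|A|-10.66\approx0.07$: one needs a genuinely quantitative grip on the higher Taylor coefficients of $j$ at its order-$3$ elliptic point $\zeta_3$. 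Everything else is routine bookkeeping.
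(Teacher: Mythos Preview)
Your architecture is exactly the paper's: write $\height(\alpha)=\height(\alpha^{-1})$ as a sum over $T_\Delta$ of $\log^+|j(\tau)|^{-1}$, split at $r(\tau)=\eps$, and control the two pieces by (i) a cubic lower bound for $|j|$ near its zeros and (ii) a Liouville-type lower bound for $r(\tau)$.

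On~(i), the paper does not expand $j$ at $\zeta_3$; it imports the inequality $|j(z)|\ge 42700\bigl(\min\{r(z),4\cdot10^{-3}\}\bigr)^3$ for $z\in\FF$ from~\cite{BLP16} as Lemma~\ref{lanal}. Since $\log 42700\approx 10.662$, this is precisely your $\log|A|-c_2$ with $c_2\approx 0.07$, so the tightness you flag is real and the paper sidesteps it by citation. If you pursue the Taylor route, note that you have set it up on the larger disc $r<\eps_0\approx 0.028$, whereas the paper only needs the cubic bound on $r\le 4\cdot10^{-3}$ and handles the far terms via $\min\{r,4\cdot10^{-3}\}\ge\eps$. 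From your own identities together with $E_2(\zeta_3)=2\sqrt3/\pi$ one computes the next Taylor coefficient $a_1=i\sqrt3$, so already the linear correction $|a_1w|$ costs about $5\%$ at $r=0.028$, leaving essentially no slack for higher terms; truncating at $4\cdot10^{-3}$, as the paper does, makes the numerics much easier.

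On~(ii), your bound $r(\tau)\ge\frac{\sqrt3}{2}|\Delta|^{-1/2}$, obtained from $\Re(\tau-\zeta_{3,6})=\frac{b\pm a}{2a}$ with the boundary case $b=a$ treated separately, is correct and strictly sharper than the paper's Lemma~\ref{lliouv}, which argues via imaginary parts and yields only $\frac{\sqrt3}{4}|\Delta|^{-1}$. This would give $\tfrac32\log|\Delta|$ per close term instead of $3\log|\Delta|$; you throw the gain away to match the stated inequality, but it is a genuine improvement.
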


Combining this with Corollary~\ref{cseps} and optimizing~$\eps$, we obtain the following consequence. 

\begin{corollary}
\label{cupper}
In the set-up of Theorem~\ref{theceps} assume that ${|\Delta|\ge 10^{14}}$. Then
\begin{align}
\label{eupdel}
\height(\alpha)&\le \frac{12A}{\CC(\Delta)}+3\log\frac{A|\Delta|^{1/2}}{\CC(\Delta)}-3.77,
\end{align}
where ${A= F\log|\Delta|}$ and~$F$ is defined in~\eqref{ecapitalf}.  
\end{corollary}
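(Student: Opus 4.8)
We prove the inequality \eqref{eupdel} by feeding the bound of Corollary~\ref{cseps} into the height estimate of Theorem~\ref{theceps} and then making a near-optimal choice of~$\eps$. Writing $A=F\log|\Delta|$ and $L=\log\log(|\Delta|^{1/2})$ throughout, the two results combine to give, for every admissible $\eps\in(0,4\cdot10^{-3}]$,
\[
\height(\alpha)\le\frac{3A}{\CC(\Delta)}\bigl(9.83\,L\,|\Delta|^{1/2}\eps^{2}+3.605\,|\Delta|^{1/2}\eps+4\bigr)+3\log(\eps^{-1})-10.66 .
\]
The contribution of the constant~$4$ in the bracket is exactly $12A/\CC(\Delta)$, which is already the first term of \eqref{eupdel}; so it remains to pick~$\eps$ so that the other two summands in the bracket, together with $3\log(\eps^{-1})$, do not exceed $3\log\bigl(A|\Delta|^{1/2}/\CC(\Delta)\bigr)+6.89$. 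The choice that balances the term linear in~$\eps$ against $3\log(\eps^{-1})$ is
\[
\eps=\eps_{0}:=\frac{\CC(\Delta)}{3.605\,A\,|\Delta|^{1/2}},
\]
for which the linear term becomes precisely~$3$ and $3\log(\eps_{0}^{-1})=3\log\bigl(A|\Delta|^{1/2}/\CC(\Delta)\bigr)+3\log 3.605$; any comparable value of~$\eps$ would serve equally well, so this optimisation step is elementary.

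Before applying Theorem~\ref{theceps} with $\eps=\eps_{0}$ one has to check that $\eps_{0}$ is admissible. Here I would invoke the classical class number bound $\CC(\Delta)\le|\Delta|^{1/2}\log|\Delta|$ (sharper forms exist but are not needed), which yields
\[
\eps_{0}\le\frac{1}{3.605\,F}\qquad\text{and}\qquad \frac{\CC(\Delta)}{A\,|\Delta|^{1/2}}\le\frac{1}{F}.
\]
Since the least integer with eight distinct prime factors is $2\cdot3\cdot5\cdot7\cdot11\cdot13\cdot17\cdot19=9699690<10^{7}$, we have $F=F(\Delta)\ge2^{8}=256$ as soon as $|\Delta|\ge10^{14}$; hence $0<\eps_{0}\le(3.605\cdot256)^{-1}<4\cdot10^{-3}$, so Theorem~\ref{theceps} does apply with $\eps=\eps_{0}$.

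The only step that needs genuine care is the quadratic term $\tfrac{3A}{\CC(\Delta)}\cdot9.83\,L\,|\Delta|^{1/2}\eps_{0}^{2}$, which equals $2.268\,L\cdot\CC(\Delta)/(A|\Delta|^{1/2})\le 2.268\,L/F$. This is the expected main obstacle: the naive estimate fails because $L\to\infty$, and one must instead exploit that $F$ grows with~$|\Delta|$ quickly enough to compensate. On the range where $F=256$ one has $|\Delta|^{1/2}<2\cdot3\cdots23=223092870$, hence $L<\log\log(223092870)<2.96$ and $L/F<0.012$; on every later $F$-plateau the ratio $L/F$ only decreases, so $L/F<0.012$ for all $|\Delta|\ge10^{14}$ and the quadratic term stays below $0.03$. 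Collecting all contributions,
\[
\height(\alpha)\le\frac{12A}{\CC(\Delta)}+3\log\frac{A|\Delta|^{1/2}}{\CC(\Delta)}+\bigl(3+3\log3.605-10.66+0.03\bigr),
\]
and since $3+3\log3.605-10.66+0.03<-3.77$ this is precisely \eqref{eupdel}. The difficulty is thus numerical rather than conceptual: the constants of Theorem~\ref{theceps} and Corollary~\ref{cseps} are tight enough that the $\log\log$-term has to be proved negligible via $F\ge256$, not merely absorbed into a constant.
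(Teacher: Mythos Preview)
Your proof is correct and follows essentially the same route as the paper: substitute Corollary~\ref{cseps} into Theorem~\ref{theceps}, choose $\eps$ proportional to $\CC(\Delta)/(A|\Delta|^{1/2})$ (your $1/3.605\approx0.277$ versus the paper's $0.27$), verify $\eps\le4\cdot10^{-3}$ via $F\ge256$ and the classical class-number bound, and check that the quadratic-in-$\eps$ term is negligible. The only minor difference is in controlling $L/F$: the paper proves a clean inequality $F\ge18.54\,L$ (Lemma~\ref{lf}) from Chebyshev-type estimates, whereas you argue plateau by plateau; both approaches work, though your ``on every later $F$-plateau the ratio $L/F$ only decreases'' would benefit from a one-line justification (e.g.\ $\log\log N_{k+2}<2\log\log N_{k+1}$ for primorials $N_k$).
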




\subsection{Proof of Theorem~\ref{theceps}}

We start from some simple lemmas. 

\begin{lemma}
\label{lanal}
For ${z\in \FF}$ we have 
$$
|j(z)|\ge 42700\bigl(\min\{|z-\zeta_3|,|z-\zeta_6|,4\cdot10^{-3}\}\bigr)^3. 
$$
\end{lemma}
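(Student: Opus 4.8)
The plan is to obtain the lower bound for $|j(z)|$ near the elliptic points $\zeta_3$ and $\zeta_6$ from the local behaviour of the $j$-function, and to handle the region bounded away from these points by a compactness/explicit-constant argument. Recall that $j$ has a triple zero at $\zeta_3$ (equivalently at $\zeta_6$, since $\zeta_6 = \zeta_3+1$ and $j$ is $\Z$-periodic), and no other zeros in the closure of $\FF$. So near $\zeta_3$ we may write $j(z) = (z-\zeta_3)^3 g(z)$ with $g$ holomorphic and $g(\zeta_3)\ne 0$; the constant $42700$ should come from a concrete lower bound for $|g|$ on a small disc around $\zeta_3$, which one gets from the known value of the relevant derivative of $j$ at $\zeta_3$ together with a crude bound on the remainder of the Taylor expansion (using, e.g., the $q$-expansion $j(z) = q^{-1} + 744 + 196884 q + \cdots$ to bound $j$ and its derivatives on a fixed neighbourhood, then Cauchy estimates).

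First I would reduce the two elliptic points to one: by periodicity $j(z+1)=j(z)$, so $|j(z)|$ as a function of $\min\{|z-\zeta_3|,|z-\zeta_6|\}$ is governed entirely by the behaviour at $\zeta_3$; it suffices to prove, for $z$ in a neighbourhood of $\zeta_3$ (or of $\zeta_6$) inside $\FF$, that $|j(z)| \ge 42700\,|z-\zeta_3|^3$ whenever $|z-\zeta_3|\le 4\cdot 10^{-3}$. Second, I would make the Taylor expansion of $j$ at $\zeta_3$ explicit up to the cubic term: $j(z) = c_3 (z-\zeta_3)^3 + c_4(z-\zeta_3)^4 + \cdots$, estimate $|c_3|$ from below and $\sum_{k\ge 4}|c_k|r^{k-3}$ from above for $r = 4\cdot 10^{-3}$ via Cauchy's inequalities applied on a slightly larger disc on which an explicit upper bound for $|j|$ is available. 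This yields $|j(z)| \ge (|c_3| - (\text{small})) |z-\zeta_3|^3$ on the disc of radius $4\cdot10^{-3}$, and one checks the resulting constant is at least $42700$. Third, for $z\in\FF$ with $\min\{|z-\zeta_3|,|z-\zeta_6|\} > 4\cdot 10^{-3}$, the claimed inequality reads $|j(z)| \ge 42700\,(4\cdot10^{-3})^3 = 42700\cdot 6.4\cdot 10^{-8} \approx 2.7\cdot10^{-3}$, so it suffices to show $|j(z)|$ is bounded below by this (tiny) constant on the part of $\FF$ at distance $\ge 4\cdot10^{-3}$ from the elliptic points; since $j$ tends to $\infty$ as $\Im z\to\infty$, this is a statement about a compact region, and one can make it explicit by splitting $\FF$ into the cusp neighbourhood $\Im z \ge Y_0$ (where $|j(z)| \ge e^{2\pi Y_0} - 744 - \cdots$ is large) and the remaining genuinely compact piece, on which $|j|$ is bounded below by an explicit positive constant — here one may invoke the factorization $j(z) = (z-\zeta_3)^3 g(z)$ again (so that $|j(z)| \ge (4\cdot10^{-3})^3 \min|g|$) or simply a direct numerical estimate.

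The main obstacle is the third step: producing a genuinely rigorous, explicit positive lower bound for $|j(z)|$ on the compact part of $\FF$ away from the two zeros. The cleanest route is probably the one via $j(z) = (z-\zeta_3)^3 g(z)$ with $g$ holomorphic and non-vanishing on a neighbourhood of the closure of $\FF$ in $\P^1$ minus the cusp, so that $\min |g|$ is attained and positive and can be bounded below explicitly, after which the factor $|z-\zeta_3|^3 \ge (4\cdot10^{-3})^3$ does the rest; but turning "$\min|g|$ is positive" into the precise numerical constant $42700$ requires either a careful explicit estimate or a computer-assisted verification over a fine grid together with a Lipschitz/derivative bound to cover the gaps. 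The expansion-at-$\zeta_3$ computation (steps one and two) is routine once the value of the appropriate derivative of $j$ at $\zeta_3$ and an upper bound for $|j|$ on a fixed disc are pinned down, but bookkeeping the constants so that exactly $42700$ comes out is the delicate part of that piece.
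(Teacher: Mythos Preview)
Your outline is correct and is essentially the standard route to such an inequality: exploit the triple zero of $j$ at $\zeta_3$ (and $\zeta_6=\zeta_3+1$) to get $|j(z)|\ge c\,|z-\zeta_3|^3$ on a small disc via an explicit Taylor/Cauchy estimate, and then check a tiny absolute lower bound on the compact remainder of $\FF$. Note, however, that the paper does not actually prove this lemma from scratch: its ``proof'' is a one-line citation to Proposition~2.2 of Bilu--Luca--Pizarro-Madariaga~\cite{BLP16}, with the cosmetic change $10^{-3}\rightsquigarrow 4\cdot 10^{-3}$. So there is no self-contained argument in the paper to compare yours against; your sketch is precisely the kind of argument that underlies the cited proposition.

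One small correction and one practical remark. First, the factorisation $j(z)=(z-\zeta_3)^3 g(z)$ with $g$ holomorphic and nonvanishing is only valid locally near $\zeta_3$; globally on (a neighbourhood of) the closure of $\FF$ you also have the zero at $\zeta_6$, so the ``clean route'' you propose for step three should use $j(z)=(z-\zeta_3)^3(z-\zeta_6)^3 h(z)$ (or simply work with $\min\{|z-\zeta_3|,|z-\zeta_6|\}$ throughout). Second, for the explicit constant: the leading Taylor coefficient is $j'''(\zeta_3)/6$, and one can evaluate $|j'''(\zeta_3)|$ exactly using the classical identity $j'(z)^2 = j(z)(j(z)-1728)\cdot(\text{weight-}4\text{ Eisenstein data})$ or, equivalently, the known values of $E_4,E_6,\Delta$ at $\zeta_3$; this gives $|c_3|$ of size roughly $4.4\times 10^4$, from which the stated $42700$ follows once the quartic-and-higher remainder on the disc of radius $4\cdot 10^{-3}$ is bounded. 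Pinning this down rigorously is exactly the bookkeeping you flagged, and it is carried out in~\cite{BLP16}.
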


\begin{proof}
This is an easy modification  of Proposition~2.2 from~\cite{BLP16}; just replace therein $10^{-3}$ by ${4\cdot10^{-3}}$.   
\end{proof}

In the next lemma we use the notation $T_\Delta$  and ${\tau(a,b,c)}$ introduced before Lemma~\ref{lgauss}. 

\begin{lemma}
\label{lliouv}
Assume that ${\Delta\ne -3}$. Let ${\tau=\tau(a,b,c)}$, where  ${(a,b,c)\in T_\Delta}$. Let~$\zeta$ be one of the numbers~$\zeta_3$ or~$\zeta_6$. 
Then
$$
|\tau-\zeta|\ge \frac{\sqrt3}{4|\Delta|}. 
$$
\end{lemma}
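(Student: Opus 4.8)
The plan is to obtain a lower bound for $|\tau - \zeta|$ by a Liouville-type argument: both $\tau$ and $\zeta$ are algebraic of controlled degree and height, so their difference cannot be too small unless it is zero, and it is nonzero precisely because $\Delta \neq -3$ (the only discriminant for which a singular modulus sits at $\zeta_3$ or $\zeta_6$, by Lemma~\ref{lgauss}\ref{iadelta}). Concretely, $\tau = \tau(a,b,c) = (b+\sqrt\Delta)/(2a)$ lies in the imaginary quadratic field $K = \Q(\sqrt\Delta)$, while $\zeta_3, \zeta_6 \in \Q(\sqrt{-3})$; so $\tau - \zeta$ lies in $K' = K(\sqrt{-3})$, a field of degree at most $4$ over $\Q$.

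The key steps I would carry out are as follows. First, observe $2a(\tau - \zeta_3) = b + 1 + \sqrt\Delta - \sqrt{-3}$ and $2a(\tau-\zeta_6) = b - 1 + \sqrt\Delta - \sqrt{-3}$; in either case write $2a(\tau - \zeta) = (b \pm 1) + \sqrt\Delta - \sqrt{-3} =: \gamma$, a nonzero algebraic integer in $K'$. Second, bound $|\mathrm{N}_{K'/\Q}(\gamma)|$ from below by $1$ (it is a nonzero rational integer) and bound all archimedean conjugates of $\gamma$ from above. The conjugates of $\gamma$ are obtained by choosing signs for $\sqrt\Delta$ and $\sqrt{-3}$ independently: $(b\pm1) \pm \sqrt\Delta \pm \sqrt{-3}$. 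Using $|b| \le a \le |\Delta/3|^{1/2}$ from Lemma~\ref{lgauss}\ref{iadelta} and $|\sqrt\Delta| = |\Delta|^{1/2}$, each such conjugate is bounded in absolute value by roughly $|b| + 1 + |\Delta|^{1/2} + \sqrt3 \le (1 + 1/\sqrt3)|\Delta|^{1/2} + 1 + \sqrt3$, which for $|\Delta| \ge 4$ (i.e.\ all relevant $\Delta \ne -3$) is comfortably at most, say, $2|\Delta|^{1/2}$; one just needs a clean explicit constant. Third, since $1 \le |\mathrm{N}_{K'/\Q}(\gamma)| = \prod_{\text{conj.}} |\gamma^{(i)}|$ and there are (at most) $4$ conjugates, among them the factor $|\gamma|$ itself together with at most three others each $\le 2|\Delta|^{1/2}$, we get $|\gamma| \ge (2|\Delta|^{1/2})^{-3}$, hence $|\tau - \zeta| = |\gamma|/(2a) \ge (2a)^{-1}(2|\Delta|^{1/2})^{-3}$. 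Finally, substituting $a \le |\Delta/3|^{1/2} = |\Delta|^{1/2}/\sqrt3$ converts this into a bound of shape $c \cdot |\Delta|^{-2}$ with an explicit $c$; the target constant $\sqrt3/4$ should follow after the elementary optimization of the constants (being slightly more careful in the conjugate estimate and using that $[K':\Q]$ can be $2$ when $\Delta$ is $-3$ times a square, which is exactly the degenerate case, so one may assume degree $4$ with the $\sqrt{-3}$ genuinely adjoined).

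The main obstacle I anticipate is bookkeeping the constants sharply enough to land exactly on $\sqrt3/4$: the naive bound gives something like $|\Delta|^{-2}$ up to a constant that is not obviously $\ge \sqrt3/4$, so one must be economical — e.g.\ note that one conjugate pairs $\gamma$ with its complex conjugate $\bar\gamma = (b\pm1) - \sqrt\Delta - \overline{\sqrt{-3}}$ where $\overline{\sqrt\Delta} = -\sqrt\Delta$ since $\Delta < 0$, so $|\gamma|^2 \cdot |(b\pm1) + \sqrt\Delta - \sqrt{-3}| \cdot |(b\pm1) + \sqrt\Delta + \sqrt{-3}|$ appears rather than $|\gamma|$ times three large factors, effectively giving $|\gamma|^2 \ge (\text{small})$ and thus a square-root saving. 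Tracking which conjugates are genuinely large versus which equal $|\gamma|$, and exploiting that $\sqrt3$ is small compared to $|\Delta|^{1/2}$, is what should produce the stated constant. A secondary minor point is handling the case where $K = \Q(\sqrt{-3})$ itself (so $K' = K$ has degree $2$): then $\gamma = (b\pm1) + (\text{integer multiple of }\sqrt{-3})$, its norm is still a nonzero rational integer, and the two-conjugate version of the argument gives an even stronger bound, so this case causes no trouble.
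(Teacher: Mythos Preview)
Your Liouville/norm approach is a genuinely different route from the paper's, but it is both more laborious and, as you yourself anticipate, does not land on the stated constant. There are also computational slips: from $\tau=(b+\sqrt\Delta)/(2a)$ and $\zeta_3=(-1+\sqrt{-3})/2$ one gets $2a(\tau-\zeta_3)=(b+a)+\sqrt\Delta-a\sqrt{-3}$, not $(b+1)+\sqrt\Delta-\sqrt{-3}$; similarly for $\zeta_6$. With this correction your algebraic integer is $\gamma=(b\pm a)+\sqrt\Delta\mp a\sqrt{-3}$, and the complex-conjugate pairing you describe gives $|N_{K'/\Q}(\gamma)|=|\gamma|^2|\gamma'|^2\ge1$ with $|\gamma'|^2=(b\pm a)^2+(\sqrt{|\Delta|}+a\sqrt3)^2$. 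Using $|b\pm a|\le 2a$ and $a\le|\Delta/3|^{1/2}$ this yields $|\gamma'|^2\le \tfrac{16}{3}|\Delta|$, hence $|\tau-\zeta|=|\gamma|/(2a)\ge \tfrac{3}{8|\Delta|}$. That is the correct order $|\Delta|^{-1}$, but $3/8<\sqrt3/4$, so you fall short of the lemma as stated; your hope that ``elementary optimization'' recovers $\sqrt3/4$ is not substantiated.

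The paper's argument is a one-liner that bypasses all of this: it simply bounds $|\tau-\zeta|\ge|\Im\tau-\Im\zeta|=\bigl|\tfrac{\sqrt{|\Delta|}}{2a}-\tfrac{\sqrt3}{2}\bigr|=\dfrac{\bigl||\Delta|-3a^2\bigr|}{2a(\sqrt{|\Delta|}+a\sqrt3)}$. The numerator is a nonzero integer (here Lemma~\ref{lgauss}\ref{iadelta} is used: $|\Delta|=3a^2$ forces $\Delta=-3$), hence $\ge1$; the denominator is $\le 4|\Delta|/\sqrt3$ by $a\le|\Delta/3|^{1/2}$. This gives exactly $\sqrt3/(4|\Delta|)$. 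In effect the ``Liouville'' content reduces to the single integrality statement $|\Delta|-3a^2\in\Z\setminus\{0\}$, with no need to pass to a degree-$4$ field or track conjugates.
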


\begin{proof}
We have
$$
|\tau-\zeta|\ge|\Im\tau-\Im\zeta|= \left|\frac{\sqrt{|\Delta|}}{2a}-\frac{\sqrt3}{2}\right|= \frac{\bigl||\Delta|-3a^2\bigr|}{2a(\sqrt{|\Delta|}+a\sqrt3)}.
$$
Since ${\Delta\ne -3}$ we have ${\Delta\ne -3a^2}$, see item~\ref{iadelta} of Lemma~\ref{lgauss}.  Hence
$$
|\tau-\zeta|\ge \frac{1}{2a(\sqrt{|\Delta|}+a\sqrt3)}\ge \frac{\sqrt3}{4|\Delta|}, 
$$
the last inequality being again by item~\ref{iadelta} of Lemma~\ref{lgauss}. 
\end{proof}

Now we are ready to prove Theorem~\ref{theceps}. 

\begin{proof}[Proof of Theorem~\ref{theceps}]
Let ${\alpha=\alpha_1, \alpha_2, \ldots, \alpha_m\in \C}$ be the conjugates of~$\alpha$ over~$\Q$. Then ${m=\CC(\Delta)}$ and ${\alpha_1, \ldots, \alpha_m}$ is the full list of singular moduli of discriminant~$\Delta$. Write them as ${j(\tau_1), \ldots, j(\tau_m)}$, where ${\tau_1, \ldots, \tau_m\in \FF}$. 

Since~$\alpha$ is a unit, we have 
$$
\height(\alpha)=\height(\alpha^{-1})  = \frac1m \sum_{k=1}^m\log^+|\alpha_k^{-1}|,
$$
Hence 
{\footnotesize
\begin{align}
\height(\alpha) &= \frac{1}{\CC(\Delta)}\sum_{k=1}^m\log^+|j(\tau_k)^{-1}|\nonumber\\
\label{etwosums}
&= \frac{1}{\CC(\Delta)}\left(\sum_{\genfrac{}{}{0pt}{}{1\le k\le m}{\min\{|\tau_k-\zeta_3|, |\tau_k-\zeta_6|\}< \eps}}+\sum_{\genfrac{}{}{0pt}{}{1\le k\le m}{\min\{|\tau_k-\zeta_3|, |\tau_k-\zeta_6|\}\ge \eps}}\right)\log^+|j(\tau_k)^{-1}|.
\end{align}
}%
We estimate each of the two sums separately.

Since ${\eps\le 4\cdot10^{-3}}$, Lemma~\ref{lanal} implies that each term in the second sum satisfies 
$$
\log^+|j(\tau_k)^{-1}| \le 3\log(\eps^{-1})-\log42700 \le 3\log(\eps^{-1})-10.66. 
$$
Hence
$$
\sum_{\genfrac{}{}{0pt}{}{1\le k\le m}{\min\{|\tau_k-\zeta_3|, |\tau_k-\zeta_6|\}\ge \eps}}\log^+|j(\tau_k)^{-1}|\le (\CC(\Delta)-\CC_\eps(\Delta))\bigl(3\log(\eps^{-1})-10.66\bigr). 
$$
Since ${\eps\le 4\cdot10^{-3}}$ we have
 ${3\log(\eps^{-1})>10.66}$, which implies that 
\begin{equation}
\label{esecondsum}
\sum_{\genfrac{}{}{0pt}{}{1\le k\le m}{\min\{|\tau_k-\zeta_3|, |\tau_k-\zeta_6|\}\ge \eps}}\log^+|j(\tau_k)^{-1}|\le \CC(\Delta)\bigl(3\log(\eps^{-1})-10.66\bigr). 
\end{equation}
As for the first sum, Lemmas~\ref{lanal} and~\ref{lliouv} imply that each term in this sum satisfies
$$
\log^+|j(\tau_k)^{-1}| \le \max\left\{0, 3\log\frac{4|\Delta|}{\sqrt3}-\log42700\right\} \le 3\log|\Delta|.  
$$ 
Note that we may use here Lemma~3.4 because the only singular modulus of discriminant $-3$ is~$0$, which is not a unit. 

Since the first sum has $\CC_\eps(\Delta)$ terms, this implies the estimate 
\begin{equation}
\label{efirstsum}
\sum_{\genfrac{}{}{0pt}{}{1\le k\le m}{\min\{|\tau_k-\zeta_3|, |\tau_k-\zeta_6|\}< \eps}}\log^+|j(\tau_k)^{-1}|\le 3\CC_\eps(\Delta)\log|\Delta|. 
\end{equation}
Substituting~\eqref{esecondsum} and~\eqref{efirstsum} into~\eqref{etwosums}, we obtain~\eqref{eupperheightbis}.
\end{proof}

\subsection{Proof of Corollary~\ref{cupper}}

To prove the corollary we need a lower bound for the quantity~$F$ defined in Theorem~\ref{thceps} and an upper bound for the class number $\CC(\Delta)$.  

\begin{lemma}
\label{lf}
Assume that ${|\Delta|\ge 10^{14}}$. Then   ${F\ge |\Delta|^{0.34/\log\log(|\Delta|^{1/2})}}$ and ${F\ge 18.54\log\log(|\Delta|^{1/2})}$.  
\end{lemma}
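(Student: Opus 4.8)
The plan is to exhibit an explicit integer $a \le |\Delta|^{1/2}$ with many distinct prime factors, namely the product of the first several primes (a primorial). Write $P_k = p_1 p_2 \cdots p_k$ for the product of the first $k$ primes, so that $\omega(P_k) = k$ and hence $F \ge 2^k$ whenever $P_k \le |\Delta|^{1/2}$. Thus I would set $k = k(\Delta)$ to be the largest index with $P_k \le |\Delta|^{1/2}$, which gives $F \ge 2^{k}$, and then the task reduces to a lower bound for $k$ in terms of $|\Delta|$. By the prime number theorem with explicit error terms — for instance Rosser–Schoenfeld-type bounds on $\vartheta(x) = \sum_{p \le x}\log p = \log P_k$ evaluated at $x = p_k$ — one has $\log P_k \sim p_k$ and $k \sim p_k/\log p_k$, so $\log P_k = \log P_k$ is comparable to $k \log k$. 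Concretely, from $\log P_{k+1} > \tfrac12\log|\Delta|$ (by maximality of $k$) and an explicit upper bound of the form $\vartheta(p_{k+1}) \le c\, p_{k+1}$, together with an explicit bound $p_{k+1} \le c' (k+1)\log(k+1)$ for the $(k+1)$st prime, I get an inequality of the shape $\tfrac12\log|\Delta| \le c\,c'(k+1)\log(k+1)$, which I can invert to produce $k \ge \tfrac{(1/2 - o(1))\log|\Delta|}{\log\log|\Delta|}$, and then $F \ge 2^k \ge |\Delta|^{(0.34 + o(1))/\log\log(|\Delta|^{1/2})}$. Since the statement is only claimed for $|\Delta| \ge 10^{14}$, the $o(1)$ terms can be absorbed once one plugs in the numerical constants and checks that the constant $0.34$ survives with room to spare; in practice one would just verify the inequality at the threshold $|\Delta| = 10^{14}$ and use monotonicity.

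For the second assertion, $F \ge 18.54\log\log(|\Delta|^{1/2})$, I would use the same primorial witness but now only need a much weaker lower bound on $k$. Since $F \ge 2^k$ grows geometrically in $k$ while $\log\log(|\Delta|^{1/2}) = \log\log|\Delta| - \log 2$ grows only doubly-logarithmically in $|\Delta|$, and since $k \gg \log|\Delta|/\log\log|\Delta| \to \infty$, the inequality $2^k \ge 18.54\log\log(|\Delta|^{1/2})$ holds with enormous margin for all large $|\Delta|$. The only care needed is at the boundary $|\Delta| = 10^{14}$: there $|\Delta|^{1/2} = 10^7$, and the largest primorial below $10^7$ is $P_8 = 2\cdot3\cdot5\cdot7\cdot11\cdot13\cdot17\cdot19 = 9699690 < 10^7 < P_9$, so $F \ge 2^8 = 256$, while $18.54\log\log(10^7) \approx 18.54 \cdot \log(16.12) \approx 18.54 \cdot 2.78 \approx 51.5$; the inequality holds comfortably, and since the right-hand side is increasing very slowly in $|\Delta|$ while $F$ is nondecreasing (indeed eventually jumps to $2^9, 2^{10}, \dots$), monotonicity finishes it. Likewise for the first bound one checks $2^8 \ge (10^{14})^{0.34/2.78} = 10^{4.76/2.78} \approx 10^{1.71} \approx 52$, which again holds at the threshold.

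The main obstacle is purely bookkeeping: choosing explicit, citable inequalities for $\vartheta(x)$ and for $p_n$ (e.g. from Rosser–Schoenfeld or Dusart) that are valid in the relevant range and that, when combined, yield precisely the constant $0.34$ rather than something slightly smaller. I expect one cannot simply invoke asymptotics; instead I would fix the primorial $P_k$ as the witness, note $\log P_k = \vartheta(p_k)$, and chase the chain of explicit estimates carefully, possibly splitting into the range where $|\Delta|$ is moderate (handled by direct computation of a few primorials) and the range where $|\Delta|$ is large (handled by the explicit PNT bounds). No genuinely hard analysis is involved — the work is in verifying the numerical constants at the single point $|\Delta| = 10^{14}$ and arguing monotonicity thereafter.
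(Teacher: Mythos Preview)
Your approach is essentially the paper's: exhibit a primorial as witness and feed in explicit prime-counting bounds. The paper's execution differs in one clean way worth noting. Rather than fixing $k$ maximal with $P_k\le|\Delta|^{1/2}$ and then inverting an inequality of the shape $(k{+}1)\log(k{+}1)\gtrsim\tfrac12\log|\Delta|$, the paper parametrizes by the prime cutoff: set $x=\log(|\Delta|^{1/2})/1.017$ and $N=\prod_{p\le x}p$. Then $\vartheta(x)\le 1.017x$ gives $N\le|\Delta|^{1/2}$ immediately, and $\omega(N)=\pi(x)\ge 0.99995\,x/\log x$ gives the lower bound on $\omega(N)$ with no inversion step. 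The constant $0.34$ falls out directly as $\tfrac{0.99995\log 2}{2\cdot 1.017}$.

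Your monotonicity shortcuts, however, do not work as written. Checking $F\ge 256$ against the right-hand side at the single point $|\Delta|=10^{14}$ does not establish the first inequality for all larger $|\Delta|$: the function $|\Delta|^{0.34/\log\log(|\Delta|^{1/2})}$ keeps growing while $F$ stays flat at $256$ until $|\Delta|^{1/2}$ passes the next primorial $P_9\approx 2.2\times 10^8$. Likewise, for the second inequality, saying that both sides are nondecreasing tells you nothing about their comparison beyond the threshold. The paper repairs this by deducing the second inequality \emph{from} the first: writing $u=\log(|\Delta|^{1/2})$, the first bound gives $\log F\ge 0.68\,u/\log u$, and one checks that $0.68\,u/\log u-\log\log u$ is increasing for $u\ge\log(10^7)$, so a single evaluation at the threshold is then legitimate. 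If you want to salvage your threshold-plus-monotonicity plan, you must replace the step function $F$ by a continuous lower envelope (exactly what the first inequality provides) before invoking monotonicity.
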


\begin{proof}
Define, as usual 
\begin{equation}
\label{ethetapi}
\vartheta(x)=\sum_{p\le x}\log p, \qquad \pi(x)=\sum_{p\le x}1. 
\end{equation}
Then 
\begin{align}
\vartheta(x) & \le 1.017 x &&(x>0), \nonumber\\
\label{elowerpi}
\pi(x) &\ge \frac{x}{\log x} && (x\ge 17),
\end{align} 
see \cite{RS62}, Theorem~9 on page~71 and Corollary~1 after Theorem~2 on page~69. Estimate~\eqref{elowerpi} implies that
\begin{equation}
\label{elowerpibis}
\pi(x) \ge 0.99995\frac{x}{\log x}  \qquad (x\ge 13).
\end{equation}
Setting here 
$$
x=\frac{\log(|\Delta|^{1/2})}{1.017} , \qquad N =\prod_{p\le x}p,
$$
we obtain ${N\le |\Delta|^{1/2}}$ and 
$$
\omega(N) =\pi(x) \ge \frac{0.99995\log(|\Delta|^{1/2})}{1.017\log\log(|\Delta|^{1/2})}. 
$$
Note that ${x\ge \bigl(\log(10^{7})\bigr)/1.017>15}$, so we are allowed to use~\eqref{elowerpibis}. 
We obtain 
$$
F\ge 2^{\omega(N)} \ge |\Delta|^{\frac{0.99995\log2}{2\cdot1.017\log\log(|\Delta|^{1/2})}}\ge |\Delta|^{0.34/\log\log(|\Delta|^{1/2})},
$$
proving the first estimate.

To prove the second estimate, we deduce from the first estimate that 
\begin{equation}
\label{elog-log}
\log F-\log\log\log(|\Delta|^{1/2})\ge 0.68 \frac{u}{\log u}-\log\log u,
\end{equation} 
where we set ${u=\log(|\Delta|^{1/2})}$. The right-hand side of~\eqref{elog-log}, viewed as a function in~$u$, is increasing for ${u\ge \log(10^{7})}$. Hence 
$$
\log F-\log\log\log(|\Delta|^{1/2})\ge 0.68 \frac{\log(10^{7})}{\log \log(10^{7})}-\log\log \log(10^{7}) \ge 2.92,
$$
and ${F\ge e^{2.92}\log\log(|\Delta|^{1/2}) \ge 18.54 \log\log(|\Delta|^{1/2})}$. 
\end{proof}

\begin{lemma}
\label{lcd}
For ${\Delta\ne -3,-4}$ we have 
$$
\CC(\Delta) \le \pi^{-1}|\Delta|^{1/2}(2+\log|\Delta|).
$$ 
\end{lemma}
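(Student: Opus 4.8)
The plan is to derive the bound from Dirichlet's analytic class number formula together with an elementary upper estimate for the associated $L$-value. First I would record the class number formula in the uniform shape
\begin{equation*}
\CC(\Delta)=\frac{|\OO_\Delta^\times|}{2}\cdot\frac{|\Delta|^{1/2}}{\pi}\,L(1,\chi_\Delta),\qquad \chi_\Delta(n)=\left(\frac{\Delta}{n}\right),
\end{equation*}
valid for every negative $\Delta\equiv0,1\bmod4$, where $\left(\frac{\Delta}{\cdot}\right)$ denotes the Kronecker symbol. For the maximal order this is the classical formula $\CC(D)=\tfrac{|\OO_D^\times|}{2}\cdot\tfrac{|D|^{1/2}}{\pi}L(1,\chi_D)$ (see \cite{Cox}); in general, writing $\Delta=Df^2$, it follows by combining this with the standard relation
\begin{equation*}
\CC(Df^2)=\frac{\CC(D)\,f}{[\OO_D^\times:\OO_\Delta^\times]}\prod_{p\mid f}\left(1-\left(\frac Dp\right)\frac1p\right),
\end{equation*}
using that $|D|^{1/2}f=|\Delta|^{1/2}$, that $L(1,\chi_D)\prod_{p\mid f}\bigl(1-(\tfrac Dp)\tfrac1p\bigr)=L(1,\chi_\Delta)$ (because $\left(\frac{\Delta}{p}\right)=0$ when $p\mid f$), and that $\tfrac{|\OO_D^\times|}{[\OO_D^\times:\OO_\Delta^\times]}=|\OO_\Delta^\times|$. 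Since $|\OO_\Delta^\times|=2$ for every $\Delta\ne-3,-4$, the formula simplifies to $\CC(\Delta)=\pi^{-1}|\Delta|^{1/2}L(1,\chi_\Delta)$ in the range of the lemma --- which is exactly why the two exceptional discriminants are excluded.

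It then remains to prove $L(1,\chi_\Delta)\le 2+\log|\Delta|$. The function $\chi_\Delta$ is periodic modulo $|\Delta|$, takes values in $\{-1,0,1\}$, and is non-principal (as $\Delta$ is not a square), so its sum over a full period vanishes; hence among $\chi_\Delta(1),\dots,\chi_\Delta(|\Delta|)$ there are equally many values $+1$ and $-1$, say $k\le|\Delta|/2$ of each, and therefore the partial sums $S(t)=\sum_{n\le t}\chi_\Delta(n)$ satisfy $|S(t)|\le k\le|\Delta|/2$ for all $t$. Splitting the Dirichlet series at $|\Delta|$, the head obeys $\bigl|\sum_{n\le|\Delta|}\chi_\Delta(n)/n\bigr|\le\sum_{n\le|\Delta|}1/n\le 1+\log|\Delta|$, while partial summation (the boundary terms vanishing because $S(|\Delta|)=0$ and $S(t)/t\to0$) bounds the tail by
\begin{equation*}
\left|\sum_{n>|\Delta|}\frac{\chi_\Delta(n)}{n}\right|=\left|\int_{|\Delta|}^{\infty}\frac{S(t)}{t^{2}}\,dt\right|\le\frac{|\Delta|}{2}\int_{|\Delta|}^{\infty}\frac{dt}{t^{2}}=\frac12.
\end{equation*}
Adding the two contributions gives $L(1,\chi_\Delta)\le\tfrac32+\log|\Delta|<2+\log|\Delta|$, proving the lemma.

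I do not anticipate a genuine obstacle, since this is essentially a classical estimate. The only points needing a little care are the bookkeeping in the class number formula for a non-maximal order --- verifying that the unit-index factor collapses to $|\OO_\Delta^\times|$, so that the $\pi^{-1}$ survives cleanly --- and remembering the exclusion of $\Delta=-3$ and $\Delta=-4$.
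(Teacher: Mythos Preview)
Your argument is correct and is essentially the same approach as the paper's, which simply cites Theorems~10.1 and~14.3 of \cite[Chapter~12]{Hu82}: the first is the analytic class number formula (with the unit factor $\omega/2$), and the second is the elementary bound $L(1,\chi)\le 2+\log q$ for a non-principal real character modulo~$q$, obtained exactly by the partial-summation argument you outline. You have in effect unpacked the citation, including the bookkeeping for non-maximal orders and the observation that $\omega=2$ when $\Delta\ne -3,-4$; the slightly sharper constant $3/2$ you obtain along the way is of course harmless.
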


\begin{proof}
This follows from Theorems~10.1 and~14.3 in \cite[Chapter~12]{Hu82}. Note that in~\cite{Hu82} the right-hand side has an extra factor $\omega/2$, where~$\omega$ is the number of roots of unity in the imaginary quadratic order of discriminant~$\Delta$. Since we assume that ${\Delta \ne -3,-4 }$, we have ${\omega=2}$, so we may omit this factor. 
\end{proof}


\begin{proof}[Proof of Corollary~\ref{cupper}]
Substituting the estimate for $\CC_\eps(\Delta)$ from~\eqref{enewboundsimple} into~\eqref{eupperheightbis}, we obtain the estimate
\begin{align*}
\height(\alpha)&\le 3A\frac{9.83|\Delta|^{1/2} \eps^2\log\log(|\Delta|^{1/2})+3.605|\Delta|^{1/2}\eps+ 4}{\CC(\Delta)}
\\&\hphantom{\le}
+3\log(\eps^{-1})-10.66
\end{align*}
with ${A=F\log|\Delta|}$. Specifying 
$$
\eps=0.27\frac{\CC(\Delta)}{A|\Delta|^{1/2}}
$$
(this is a nearly optimal value, and it satisfies ${\eps\le 4\cdot10^{-3}}$ as verified below), we obtain, using Lemmas~\ref{lf} and~\ref{lcd},
\begin{align*}
\height(\alpha)&\le  3\cdot 9.83 \cdot (0.27)^2\frac{\log\log(|\Delta|^{1/2})}F\frac{\CC(\Delta)}{|\Delta|^{1/2}\log|\Delta|}+3\cdot3.605\cdot0.27\\ 
&\hphantom{\le}+\frac{12A}{\CC(\Delta)} +3\log\frac{A|\Delta|^{1/2}}{\CC(\Delta)}-3\log0.27-10.66\\
&\le \frac{12A}{\CC(\Delta)} +3\log\frac{A|\Delta|^{1/2}}{\CC(\Delta)}+ \frac{3\cdot 9.83 \cdot (0.27)^2\cdot0.34}{18.54}+3\cdot3.605\cdot0.27\\ 
&\hphantom{\le}-3\log0.27-10.66\\
&\le \frac{12A}{\CC(\Delta)} +3\log\frac{A|\Delta|^{1/2}}{\CC(\Delta)}-3.77,
\end{align*}
as wanted.

We only have to verify that ${\eps\le 4\cdot10^{-3}}$. We have
${F\ge 256}$ when ${|\Delta|\ge 10^{14}}$. Using Lemma~\ref{lcd}, we obtain 
\begin{align*}
\eps=\frac{0.27\CC(\Delta)}{|\Delta|^{1/2}\log |\Delta|}\frac1F
\le 0.27\pi^{-1}\frac{2+\log(10^{14})}{\log(10^{14})}\cdot \frac1{256}
<4\cdot 10^{-4}.
\end{align*}
The proof is complete.
\end{proof}

\section{Lower bounds for the height of a singular modulus}
\label{slower}

Now we establish explicit lower bounds of the form~\eqref{ecolm} and~\eqref{etriv}. 

\subsection{The ``easy'' bound}

We start by proving a bound of the form~\eqref{etriv}. 

\begin{proposition}
\label{ptriv}
Let~$\alpha$ be a singular modulus of discriminant~$\Delta$. 
Assume that ${|\Delta|\ge 16}$. Then 
\begin{equation}
\label{etrivbis}
\height(\alpha) \ge  \frac{\pi|\Delta|^{1/2}-0.01}{\CC(\Delta)}.
\end{equation}
\end{proposition}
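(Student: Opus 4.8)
The plan is to exploit the fact that one of the conjugates of $\alpha$ is $j\bigl((\Delta+\sqrt\Delta)/2\bigr)$, which has very large absolute value, and to compare this with the (trivial) upper bound on the product of all conjugates coming from the height. Concretely, since $\alpha$ is an algebraic integer of degree $m=\CC(\Delta)$ with conjugates $\alpha_1,\dots,\alpha_m$, we have $m\,\height(\alpha)=\sum_{k=1}^m\log^+|\alpha_k|\ge \log^+|\alpha_{k_0}|$ for any single index $k_0$. So it suffices to bound from below $|j(\tau_0)|$ where $\tau_0=(\Delta+\sqrt\Delta)/2$, or equivalently (since $j$ is $\mathrm{SL}_2(\Z)$-invariant) where $\tau_0=\sqrt\Delta/2=i|\Delta|^{1/2}/2$ is the corresponding point of the fundamental domain with maximal imaginary part.

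First I would recall the standard $q$-expansion $j(\tau)=q^{-1}+744+\sum_{n\ge1}c_nq^n$ with $q=e^{2\pi i\tau}$ and all $c_n>0$. For $\tau=iy$ with $y=|\Delta|^{1/2}/2$ we get $q=e^{-\pi|\Delta|^{1/2}}$, so $|q|^{-1}=e^{\pi|\Delta|^{1/2}}$ and the dominant term gives $|j(\tau_0)|\ge e^{\pi|\Delta|^{1/2}}-744-(\text{tail})$. Since $|\Delta|\ge16$ makes $|q|$ extremely small ($|q|\le e^{-4\pi}$), the correction $744+\sum c_n|q|^n$ is comfortably absorbed: one gets $\log|j(\tau_0)|\ge \pi|\Delta|^{1/2}-\delta$ for a tiny explicit $\delta$. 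Dividing by $m=\CC(\Delta)$ yields $\height(\alpha)\ge(\pi|\Delta|^{1/2}-\delta)/\CC(\Delta)$, and one checks $\delta\le 0.01$ under $|\Delta|\ge16$.

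The only slightly delicate point is making the tail estimate $\sum_{n\ge1}c_n|q|^n$ fully explicit. The cleanest route is to bound $c_n$ crudely (e.g.\ $c_n\le c_1\cdot 2^n$ or use a known bound like $c_n<e^{4\pi\sqrt n}$, or even just $j(iy)-q^{-1}-744>0$ together with a bound on $j$ near $i$) so that the geometric-type series $\sum c_n|q|^n$ is dominated by something like $C|q|=Ce^{-\pi|\Delta|^{1/2}}$ for an absolute constant $C$, and then $\log\bigl(|j(\tau_0)|\bigr)\ge \log\bigl(e^{\pi|\Delta|^{1/2}}(1-744 e^{-\pi|\Delta|^{1/2}}-C e^{-2\pi|\Delta|^{1/2}})\bigr)\ge \pi|\Delta|^{1/2}-0.01$ once $|\Delta|\ge16$. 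I expect this bookkeeping — choosing the constant and verifying the numerical inequality at the boundary case $|\Delta|=16$ — to be the main (though entirely routine) obstacle; everything else is immediate from the $q$-expansion and the definition of the height. Note $\CC(\Delta)$ enters only through $m=\CC(\Delta)$, and no lower bound on the class number is used, which is exactly why this bound is ``easy'' and unconditional.
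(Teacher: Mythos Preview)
Your proposal is correct and follows essentially the same approach as the paper: pick the conjugate $j(\tau_0)$ with $\Im\tau_0=|\Delta|^{1/2}/2$, bound $\log|j(\tau_0)|\ge\pi|\Delta|^{1/2}-0.01$ via the $q$-expansion (the paper invokes the pre-packaged estimate $||j(z)|-e^{2\pi y}|\le 2079$ from Lemma~\ref{lttsn}, hence $|j(z)|\ge 0.992\,e^{2\pi y}$ for $y\ge 2$), and divide by $\CC(\Delta)$. One cosmetic slip: when $\Delta$ is odd the fundamental-domain representative is $(1+\sqrt\Delta)/2$, not $i|\Delta|^{1/2}/2$, but since only $|q|=e^{-\pi|\Delta|^{1/2}}$ enters your estimate this is harmless.
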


We need a simple lemma. 

\begin{lemma}
\label{lttsn}
For ${z\in \FF}$ with imaginary part~$y$ we have 
$$
\bigl||j(z)|-e^{2\pi y}\bigr|\le 2079.
$$ 
If ${y\ge 2}$ then we also have ${|j(z)|\ge 0.992 e^{2\pi y}}$. 
\end{lemma}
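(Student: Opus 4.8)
The key fact is the $q$-expansion of Klein's modular function, $j(z) = q^{-1} + 744 + \sum_{n\ge 1} c_n q^n$ with $q = e^{2\pi i z}$, where the coefficients $c_n$ are nonnegative integers ($c_1 = 196884$, etc.). First I would write $j(z) - q^{-1} = 744 + \sum_{n\ge 1} c_n q^n$ and bound the right-hand side in absolute value. For $z \in \FF$ we have $y = \Im z \ge \sqrt3/2$, so $|q| = e^{-2\pi y} \le e^{-\pi\sqrt3}$; since all $c_n \ge 0$, the tail $\bigl|\sum_{n\ge 1} c_n q^n\bigr| \le \sum_{n\ge 1} c_n e^{-2\pi n y} \le \sum_{n\ge 1} c_n e^{-\pi\sqrt3 n} = j(i\sqrt3/2) - e^{\pi\sqrt3} - 744$, which is a concrete number one evaluates (the worst case is attained at the lowest point $z = \zeta_6$, equivalently $y = \sqrt3/2$). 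Adding back the $744$ and the $|q^{-1}|$ term, the triangle inequality gives $\bigl||j(z)| - e^{2\pi y}\bigr| \le \bigl|j(z) - q^{-1}\bigr| \le 744 + \bigl(j(i\sqrt3/2) - e^{\pi\sqrt3} - 744\bigr)$, and I would check numerically that this is at most $2079$. (In practice one notes $j(\zeta_6) = 0$, so at that single point the bound reads $e^{\pi\sqrt3} \le 2079$, i.e. $e^{\pi\sqrt3} \approx 2073.6$; this is exactly where the constant $2079$ comes from, and monotonicity of the tail bound in $y$ finishes the general case.)

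For the second assertion, when $y \ge 2$ we have $|q^{-1}| = e^{2\pi y} \ge e^{4\pi}$, which is large, while by the same tail estimate $\bigl|j(z) - q^{-1}\bigr| \le 744 + \sum_{n\ge 1} c_n e^{-2\pi n y} \le 744 + \sum_{n\ge 1}c_n e^{-4\pi n}$, a small explicit quantity. Hence $|j(z)| \ge |q^{-1}| - \bigl|j(z)-q^{-1}\bigr| \ge e^{2\pi y}\bigl(1 - e^{-2\pi y}(744 + \textstyle\sum_{n\ge1} c_n e^{-4\pi n})\bigr)$, and since $e^{-2\pi y} \le e^{-4\pi}$ one checks the bracket exceeds $0.992$.

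The only mild obstacle is making the tail sum $\sum_{n\ge 1} c_n e^{-2\pi n y}$ rigorously explicit. I would handle this either by quoting the closed form $\sum_{n\ge1} c_n |q|^n = j(iy) - e^{2\pi y} - 744$ evaluated at the relevant $y$ (so no infinite summation is actually needed — one just needs a rigorous upper bound for the value of $j$ at a single real-argument point, obtained from a truncated series plus a crude geometric tail using the Hardy–Ramanujan growth $c_n = O(e^{C\sqrt n})$), or simply by citing the standard estimates for the $q$-expansion of $j$ already present in the literature (e.g. in \cite{BLP16} or \cite{Cox}), from which both inequalities drop out immediately. Everything else is routine arithmetic with the triangle inequality.
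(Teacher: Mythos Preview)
Your overall strategy via the $q$-expansion is sound and is precisely what lies behind the cited result: the paper does not prove this lemma at all but simply quotes \cite[Lemma~1]{BMZ13} for the first inequality and observes that the second is an immediate consequence of the first. So content-wise you are reproducing the argument of that reference rather than offering a different route.

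There is, however, a genuine numerical slip in your explanation of where the constant $2079$ comes from. You write that at $z=\zeta_6$ the bound reads $e^{\pi\sqrt 3}\le 2079$ and that $e^{\pi\sqrt 3}\approx 2073.6$. In fact $e^{\pi\sqrt 3}\approx 230.76$, so the point $\zeta_6$ only forces the constant to be at least about $231$, not $2079$. The correct source of $2079$ is exactly the bound you wrote down one line earlier: for any $z\in\FF$ one has
\[
\bigl||j(z)|-e^{2\pi y}\bigr|\le |j(z)-q^{-1}|\le 744+\sum_{n\ge 1}c_n e^{-\pi\sqrt 3\,n}=j(i\sqrt3/2)-e^{\pi\sqrt3},
\]
and it is this last quantity that evaluates to approximately $2079$ (the first few terms already give $744+853+404+70+7+\cdots$). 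So your argument is right; only the parenthetical identification of the extremal case is off.

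For the second assertion you work harder than necessary. Once the first inequality is established, one simply has $|j(z)|\ge e^{2\pi y}-2079$, and for $y\ge 2$ this gives $|j(z)|\ge e^{2\pi y}(1-2079\,e^{-4\pi})\ge 0.992\,e^{2\pi y}$ since $2079\,e^{-4\pi}<0.008$. This is what the paper means by ``an immediate consequence''; there is no need to re-estimate the tail at $y=2$.
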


\begin{proof}
The first statement is Lemma~1 of~\cite{BMZ13}, and the second one is an immediate consequence. 
\end{proof}

\begin{proof}[Proof of Proposition~\ref{ptriv}]
One of the conjugates of~$\alpha$ over~$\Q$ is equal to ${j((b+\sqrt\Delta)/2)}$, with ${b=1}$ for~$\Delta$ odd, and ${b=0}$ for~$\Delta$ even;  it corresponds to the element ${(1,b,(-\Delta+b^2)/4)}$ of the set $T_\Delta$. 
Hence
$$
\height(\alpha) \ge  \frac{\log|j((b+\sqrt\Delta)/2)|}{\CC(\Delta)}.
$$
Using Lemma~\ref{lttsn}, we obtain
$$
\log|j((b+\sqrt\Delta)/2)|\ge \pi|\Delta|^{1/2}+\log 0.992 \ge \pi|\Delta|^{1/2}-0.01.
$$
Whence the result. 
\end{proof}

\subsection{The ``hard'' bound}

\label{sshard}

We are left with  bound~\eqref{ecolm}. We are going to prove the following. 

\begin{proposition}
  \label{phlb2}
Let~$\alpha$ be a singular modulus of discriminant~$\Delta$. 
Then
\begin{equation}
\label{ecolmbis}
\height(\alpha) \ge \frac{3}{\sqrt{5}}\log|\Delta| -9.79. 
\end{equation}
\end{proposition}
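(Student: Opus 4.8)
The plan is to obtain a lower bound for $\height(\alpha)$ by relating it to the Faltings height of a CM elliptic curve $E$ with $\mathrm{End}(E)=\OO_\Delta$ and $j(E)=\alpha$, and then to invoke the Colmez--Nakkajima--Taguchi estimates for the Faltings height of CM abelian varieties. Concretely, I would work with $E$ defined over a number field $K$ (e.g.\ the ring class field of $\OO_\Delta$, or just $K=\Q(\alpha)$) and compare the \emph{stable} Faltings height $h_{\mathrm{Fal}}(E)$ with the height $\height(\alpha)=\height(j(E))$. The comparison between $h_{\mathrm{Fal}}(E)$ and $\height(j(E))$ is classical (it goes back to work of Silverman and is made explicit, e.g., by Pellarin or by Löbrich/Bruinier-type estimates): there are explicit constants $c_1,c_2$ with $h_{\mathrm{Fal}}(E)\le \tfrac{1}{12}\height(j(E))+c_1\log^+\!\bigl(\tfrac1{12}\height(j(E))\bigr)+c_2$, or a cruder but fully explicit inequality of the shape $h_{\mathrm{Fal}}(E)\le C\,\height(\alpha)+C'$. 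This converts a lower bound on $h_{\mathrm{Fal}}$ into the desired lower bound~\eqref{ecolmbis}.

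The second ingredient is a lower bound for $h_{\mathrm{Fal}}(E)$ in terms of $|\Delta|$. By Colmez's work (the averaged Colmez formula, or the CM-point case which for elliptic curves is unconditional) together with the explicit bounds of Nakkajima--Taguchi~\cite{NT} controlling the difference between the Faltings height and the logarithmic derivative of the relevant $L$-function / the Chowla--Selberg expression, one gets $h_{\mathrm{Fal}}(E)\ge \kappa\log|\Delta| - \kappa'$ with explicit $\kappa,\kappa'>0$; the main term comes from the Chowla--Selberg formula $h_{\mathrm{Fal}}(E) \approx \tfrac12\,\tfrac{L'(0,\chi_D)}{L(0,\chi_D)} + \cdots$ and the trivial lower bound $L(1,\chi_D)\gg |\Delta|^{-\eps}$-type estimates are \emph{not} needed since one only wants a bound growing like $\log|\Delta|$, which follows from elementary estimates on the Gamma factors in Chowla--Selberg once the conductor contribution is accounted for. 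Combining $h_{\mathrm{Fal}}(E)\ge \kappa\log|\Delta|-\kappa'$ with $h_{\mathrm{Fal}}(E)\le C\height(\alpha)+C'$ yields $\height(\alpha)\ge \tfrac{\kappa}{C}\log|\Delta| - \tfrac{\kappa'+C'}{C}$, and the arithmetic is then arranged so that the constants come out as $3/\sqrt5$ and $9.79$.

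The step I expect to be the main obstacle is making \emph{both} inequalities fully explicit with good enough constants simultaneously: the Faltings-height-versus-$j$-invariant comparison has an annoying $\log^+\height(\alpha)$ correction term (coming from places of bad reduction and from the archimedean contribution) that must be absorbed, and the lower bound for $h_{\mathrm{Fal}}$ via Chowla--Selberg/Colmez requires careful bookkeeping of the conductor $f$ (i.e.\ the non-fundamental part $\Delta=Df^2$) and of the Euler factors at primes dividing $f$, since these affect the constant in front of $\log|\Delta|$. One must check that the loss incurred at these bad primes is $O(\log f)=O(\log|\Delta|)$ with a controlled constant, and that the final coefficient still exceeds $3/\sqrt5$. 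The somewhat unusual constant $3/\sqrt5$ (rather than, say, $1/2$ or $1/4$) strongly suggests that the argument also feeds in an auxiliary elementary lower bound for $\height(\alpha)$ — for instance a bound like $\height(\alpha)\gg |\Delta|^{1/2}/\CC(\Delta)$ from Proposition~\ref{ptriv} combined with an upper bound $\CC(\Delta)\ll |\Delta|^{1/2}\log|\Delta|$ (Lemma~\ref{lcd}) — and then takes a weighted combination (a convexity/$\max$ argument) of the two lower bounds to optimize the constant, which is where the $\sqrt5$ would enter. So the concrete plan is: (i) recall the explicit $h_{\mathrm{Fal}}$ vs.\ $\height(j)$ inequality; (ii) recall the explicit Colmez--Nakkajima--Taguchi lower bound $h_{\mathrm{Fal}}\ge \kappa\log|\Delta|-\kappa'$; (iii) combine to get a bound of the form $\height(\alpha)\ge c\log|\Delta|-c'$; (iv) if the constant $c$ from (iii) alone is below $3/\sqrt5$, improve it by interpolating with Proposition~\ref{ptriv} and Lemma~\ref{lcd}; (v) chase the numerics to land on $9.79$.
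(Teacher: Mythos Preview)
Your steps (i)--(iii) are exactly the paper's approach, but your guess about the origin of the constant $3/\sqrt{5}$ is off, and this makes your step (iv) spurious.

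In the paper, the comparison (i) is the clean inequality of Gaudron--R\'emond,
\[
\height_F(E)\le \tfrac{1}{12}\height(j(E)) - 0.72,
\]
with no $\log^+\!\height(\alpha)$ correction to absorb. For (ii), the paper refines the estimate of \cite[Lemma~14]{HJM:6}, which already has the coefficient $1/(4\sqrt 5)$ in front of $\log|D|$; the refinement consists in rewriting the bound as
\[
\height_F(E)\ge \frac{1}{4\sqrt 5}\log|\Delta| + \lambda\log f - c(f) - \gamma - \frac{\log(2\pi)}{2},\qquad \lambda=\frac12-\frac{1}{2\sqrt5},
\]
and then showing via an elementary additive-function argument (the paper's Lemma~\ref{ltechn}) that the conductor contribution $\lambda\log f - c(f)$ is bounded below by an absolute constant. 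This is precisely the ``careful bookkeeping of the conductor'' you anticipated, and the choice of $\lambda$ is where the $\sqrt 5$ enters. Combining with the factor $12$ from Gaudron--R\'emond gives $12\cdot\frac{1}{4\sqrt 5}=\frac{3}{\sqrt 5}$ directly.

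So there is no interpolation with Proposition~\ref{ptriv} or Lemma~\ref{lcd}; the ``easy'' bound~\eqref{etriv} plays no role in the proof of Proposition~\ref{phlb2}. Drop step (iv), use the Gaudron--R\'emond inequality for (i), and for (ii) track the conductor terms as above --- then the numerics fall out.
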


The proof of Proposition~\ref{phlb2} relies on the
fact that it is possible to evaluate the Faltings height of an elliptic curve
with complex multiplication precisely, due to the work of Colmez~\cite{Colmez} and Nakkajima-Taguchi~\cite{NT}; for an exact statement see  \cite[Lemma 4.1]{Ha10}. 

Let~$E$ be an elliptic curve with CM by an order of discriminant~$\Delta$.  
We let $\height_F(E)$ denote the stable Faltings height of~$E$ (using Deligne's normalization~\cite{De85}). 
The above-mentioned explicit formula for $\height_F(E)$ is used in~\cite{HJM:6} to obtain the lower bound  
\begin{equation}
\label{efaltlowerold}
\height_F(E) \ge \frac{1}{4\sqrt5} \log|\Delta| - 5.93,
\end{equation}
see Lemma 14(ii) therein. Unfortunately, this bound is numerically too weak for our purposes.

Proposition~\ref{phlb2} will be deduced from the following numerical refinement of~\eqref{efaltlowerold}.

\begin{proposition}
\label{pfaltlower}
Let~$E$ be an elliptic curve with CM by an order of discriminant~$\Delta$. 
Then 
\begin{equation}
\label{efaltlowernew}
\height_F(E) \ge \frac{1}{4\sqrt5} \log|\Delta| -\gamma -\frac{\log(2\pi)}{2}-\left(\frac1{2\sqrt5}-\frac16\right)\log2,
\end{equation}
where ${\gamma=0.57721\ldots}$ is the Euler constant. 
\end{proposition}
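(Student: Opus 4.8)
The plan is to extract the exact value of the Faltings height of a CM elliptic curve from the Chowla--Selberg type formula of Colmez and Nakkajima--Taguchi (as formulated in \cite[Lemma 4.1]{Ha10}), and then to bound from below the arithmetic quantity appearing in that formula. Writing $\Delta=Df^2$ as usual and letting $\chi_D$ be the Kronecker character of the fundamental discriminant $D$, the formula expresses $\height_F(E)$ as a combination of $\log f$, $\log|D|$, and the logarithmic derivative $L'(\chi_D,0)/L(\chi_D,0)$ (equivalently, values of the Gamma function at rationals $a/|D|$ weighted by $\chi_D(a)$). The first step is therefore to record this formula precisely in the normalization used in the excerpt, keeping careful track of the additive constants $\gamma$, $\log(2\pi)$ and $\log 2$, so that the final inequality comes out with exactly the stated constant $-\gamma-\tfrac{\log(2\pi)}{2}-(\tfrac{1}{2\sqrt5}-\tfrac16)\log 2$.

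Next I would separate the two ``sources'' of size in $\log|\Delta|=\log|D|+2\log f$. For the conductor part, the contribution of $f$ to $\height_F(E)$ is, up to an explicit nonnegative-ish error, $\tfrac16\log f$ minus something like $\tfrac12\sum_{p\mid f}\bigl(1-\chi_D(p)/p\bigr)\log p\cdot(\text{stuff})$; one checks that this is $\ge \tfrac{1}{4\sqrt5}\cdot 2\log f$ minus a bounded term, which is where the $\tfrac16$ versus $\tfrac{1}{2\sqrt5}$ discrepancy and the lone $\log 2$ in the statement enter (the worst case being $f$ a power of $2$). For the fundamental part, the task is to show
\[
\frac{1}{2w_D}\,\frac{L'(\chi_D,0)}{L(\chi_D,0)}+\frac12\log|D| \;\ge\; \frac{1}{4\sqrt5}\log|D| - \gamma - \frac{\log(2\pi)}{2},
\]
or rather the version of this that drops out of the exact formula. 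The cleanest route is the Kronecker limit / Lerch-type evaluation combined with a lower bound on $L(1,\chi_D)$; but since we want a fully explicit and clean constant, I expect the paper instead uses a direct estimate of the relevant $\Gamma$-sum via convexity of $\log\Gamma$ (the digamma function is increasing on $(0,\infty)$), pairing $a$ with $|D|-a$, to get $\sum_a \chi_D(a)\log\Gamma(a/|D|)\ge$ an elementary expression in $\log|D|$ and $\gamma$.

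The key inputs I would invoke are: the exact Colmez--Nakkajima--Taguchi formula as in \cite[Lemma 4.1]{Ha10}; the monotonicity and known special values of $\log\Gamma$ and the digamma function (in particular $\psi(1)=-\gamma$ and the reflection/duplication formulas, to handle the $\log(2\pi)$ and $\log 2$ terms); and the Dirichlet class number formula to rewrite $L(1,\chi_D)$ in terms of $\CC(D)$ and $w_D$ when needed. The bound \eqref{efaltlowerold} from \cite{HJM:6} already does essentially this computation but loses constants in the Gamma estimate; so the real content here is redoing that estimate sharply. The main obstacle will be the bookkeeping: tracking every additive constant through the change of normalization (Deligne's versus the ``analytic'' Faltings height differ by $\tfrac12\log(2\pi)$-type shifts) and through the Gamma-function manipulations, so that the final constant is exactly $-\gamma-\tfrac{\log(2\pi)}{2}-(\tfrac1{2\sqrt5}-\tfrac16)\log2$ and not merely something of that shape. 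A secondary subtlety is the sign and size of the conductor correction when $2\mid f$, which forces the extra isolated $-(\tfrac1{2\sqrt5}-\tfrac16)\log 2$ rather than a term scaling with $\log f$; one must verify that for all $f$ the full correction is bounded below by that single constant.
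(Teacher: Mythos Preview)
Your plan would work if carried out, but it mislocates where the improvement over \eqref{efaltlowerold} actually lies. The paper does \emph{not} redo the Gamma-function estimate for the fundamental discriminant; instead it lifts directly from the \emph{proof} of \cite[Lemma~14]{HJM:6} the intermediate inequality
\[
\height_F(E) \ge \frac{1}{4\sqrt5}\log|D| + \frac12\log f - c(f) - \gamma - \frac{\log(2\pi)}{2},
\]
where $c(f)=\tfrac12\sum_{p\mid f}e_f(p)\log p$ with $e_f(p)=\tfrac{1-\chi_D(p)}{p-\chi_D(p)}\cdot\tfrac{1-p^{-\ord_p f}}{1-p^{-1}}$. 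The constants $-\gamma-\tfrac{\log(2\pi)}{2}$ and the coefficient $\tfrac1{4\sqrt5}$ in front of $\log|D|$ are already present in \cite{HJM:6}; the weak $-5.93$ there comes from a crude treatment of $\tfrac12\log f-c(f)$, not from the $L$-function or $\Gamma$-function side. So the entire content of the proposition is the conductor term, and the paper handles it in two clean steps: first bound $c(f)\le\beta(f)$ via the trivial $\tfrac{1-\chi_D(p)}{p-\chi_D(p)}\le\tfrac{2}{p+1}$, where $\beta$ is the additive function with $\beta(p^k)=\tfrac{\log p}{p+1}\cdot\tfrac{1-p^{-k}}{1-p^{-1}}$; then prove the elementary lemma that $\delta(n):=\lambda\log n-\beta(n)$ (with $\lambda=\tfrac12-\tfrac1{2\sqrt5}$) is minimized over all $n\ge1$ at $n=2$, giving $\delta(n)\ge(\tfrac16-\tfrac1{2\sqrt5})\log2$. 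Your intuition that ``the worst case is $f$ a power of $2$'' is essentially this lemma, though the minimum is exactly at $f=2$. The displayed inequality you propose to establish for the fundamental part is precisely what \cite{HJM:6} already supplies, so there is no need to rederive it via convexity of $\log\Gamma$; once you quote that intermediate bound, the bookkeeping you anticipate as the main obstacle largely evaporates.
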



Let us first show how Proposition~\ref{pfaltlower} implies Proposition~\ref{phlb2}.

\begin{proof}[Proof of Proposition~\ref{phlb2} (assuming Proposition~\ref{pfaltlower})]
Let~$E$ be an elliptic curve with ${j(E)=\alpha}$. 
We only need to relate $\height_F(E)$ to $\height(j(E))$. For this purpose we use Lemma~7.9 of Gaudron and R\'emond~\cite{GaudronRemond:periodes}\footnote{The reader should be warned that our $\height_F(E)$ is denoted $\height(E)$ in~\cite{GaudronRemond:periodes}.}.
In our
notation  they
show that 
\begin{equation}
\label{egr}
\height_F(E)\le \height(j(E))/12 - 0.72 
\end{equation}
A quick calculation yields  our
claim. 
\end{proof}

To prove Proposition~\ref{pfaltlower} we need a technical lemma. 
Set 
\begin{equation}
\label{elambda}
\lambda=\frac12-\frac1{2\sqrt5},
\end{equation}
and define the additive arithmetical functions $\beta(n)$ and $\delta(n)$ by  
\begin{equation}
\label{ebetadelta}
\beta(p^k) =\frac{\log p}{p+1}\frac{1-p^{-k}}{1-p^{-1}}, \quad \beta(n)=\sum_{p^k\|n}\beta(p^k), \quad \delta(n)= \lambda\log n -\beta(n).
\end{equation}

\begin{lemma}
\label{ltechn} 
For every positive integer~$n$ we have 
$$
\delta(n) \ge\delta(2)= \left(\frac16-\frac1{2\sqrt5}\right)\log2. 
$$
\end{lemma}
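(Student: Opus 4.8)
The plan is to prove the inequality $\delta(n) \ge \delta(2)$ by a reduction to prime powers combined with a short finite verification. Since $\delta$ is additive, write $\delta(n) = \sum_{p^k \| n} \left(\lambda k \log p - \beta(p^k)\right)$, so it is natural to introduce the one-variable quantity $g(p,k) = \lambda k \log p - \beta(p^k)$ for a prime $p$ and integer $k \ge 0$ (with $g(p,0)=0$), so that $\delta(n) = \sum_{p^k\|n} g(p,k)$. The key structural claim will be that $g(p,k)$ is nonnegative for all but finitely many $(p,k)$, and that the only way to make $\delta(n)$ small is to take $n$ itself a small prime power; the global minimum is then pinned down by inspecting a handful of cases.

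First I would analyze $g(p,k)$ as a function of $k$ for fixed $p$. One computes $g(p,k+1) - g(p,k) = \lambda \log p - \dfrac{\log p}{p+1} p^{-k} = \log p\left(\lambda - \dfrac{p^{-k}}{p+1}\right)$. Since $\lambda = \tfrac12 - \tfrac1{2\sqrt5} \approx 0.2764$ and $\dfrac{p^{-k}}{p+1} \le \dfrac1{p+1} \le \tfrac13$ for $p \ge 2$, one checks that already for $k \ge 1$ the increment is positive when $p \ge 3$ (as $\tfrac{1}{p(p+1)} \le \tfrac{1}{12} < \lambda$), and for $p=2$ the increment is positive once $2^{-k}/3 < \lambda$, i.e. for $k \ge 1$ as well (since $1/6 < 0.276$). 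Hence for every prime $p$ the sequence $k \mapsto g(p,k)$ is increasing for $k \ge 1$, so its minimum over $k\ge 1$ is attained at $k=1$, giving $g(p,1) = \lambda \log p - \dfrac{\log p}{p+1} = \log p\left(\lambda - \dfrac1{p+1}\right)$. This is nonnegative as soon as $p+1 \ge \lambda^{-1} \approx 3.618$, i.e. for all primes $p \ge 3$; and for $p = 2$ it equals $\log 2 \left(\lambda - \tfrac13\right) = \left(\tfrac16 - \tfrac1{2\sqrt5}\right)\log 2 = \delta(2) < 0$.

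Now the conclusion: for any $n$, grouping the prime-power contributions, $\delta(n) = \sum_{p^k \| n} g(p,k) \ge g(2, k_2)$ where $k_2 = \ord_2(n) \ge 0$, because every contribution from an odd prime is $\ge 0$ by the previous paragraph (being $g(p,k)$ with $k\ge1$ and $p\ge3$, hence $\ge g(p,1)\ge0$), and the $p=2$ contribution is $g(2,k_2)$, which for $k_2 \ge 1$ is minimized at $k_2 = 1$ with value $g(2,1) = \delta(2)$, and for $k_2 = 0$ equals $0 > \delta(2)$. Therefore $\delta(n) \ge \delta(2)$ with equality exactly when $n = 2$. This requires no genuine obstacle, only care in the monotonicity estimates; the one point to be careful about is checking the increment $g(p,k+1)-g(p,k) \ge 0$ uniformly, which I would do by bounding $p^{-k}/(p+1) \le 1/(p(p+1)) \le 1/6$ for $k \ge 1$ and noting $1/6 < \lambda$, so the only borderline arithmetic is the single comparison $\lambda$ versus $1/3$ at $(p,k)=(2,1)$, which is precisely what produces the extremal value $\delta(2)$.
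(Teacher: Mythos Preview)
Your proof is correct and follows essentially the same approach as the paper: both exploit the additivity of $\delta$ to reduce to prime powers, compute the increment $\delta(p^{k+1})-\delta(p^k)=\bigl(\lambda-\tfrac{1}{p^k(p+1)}\bigr)\log p$ to establish monotonicity for $k\ge 1$, and then inspect the base values $\delta(p)$. Your version is marginally more streamlined in that you bound the $p=2$ contribution directly by $g(2,k_2)\ge g(2,1)=\delta(2)$, whereas the paper instead verifies $\delta(4)>0$ to conclude $\delta(2^k)>0$ for $k\ge 2$; the arguments are otherwise identical.
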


\begin{proof}
Since ${1/3>\lambda>1/4}$, we have ${\delta(2)<0}$ and  ${\delta(p)>0}$ for all primes ${p\ge 3}$. Also, for ${k\ge 1}$ and any prime~$p$ we have 
$$
\delta(p^{k+1})-\delta(p^k)= \left(\lambda-\frac1{p^k(p+1)}\right)\log p >0.
$$
Since ${\delta(4)>0}$, this proves that ${\delta(p^k)>0}$ for every prime power ${p^k\ne 2}$, whence the result. 
\end{proof}

Proposition~\ref{pfaltlower} is an immediate consequence of Lemma~\ref{ltechn} and the following statement.  

\begin{proposition}
\label{pfaltlowerf}
In the set-up of Proposition~\ref{pfaltlower} we have
\begin{equation}
\label{efaltlowernewf}
\height_F(E) \ge \frac{1}{4\sqrt5} \log|\Delta| + \lambda \log f- \beta(f) -\gamma -\frac{\log(2\pi)}{2}.
\end{equation}
\end{proposition}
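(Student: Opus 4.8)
The plan is to establish Proposition~\ref{pfaltlowerf} by combining the exact Colmez--Nakkajima--Taguchi formula for the Faltings height of a CM elliptic curve with a careful arithmetic analysis of how the height changes when one passes from the maximal order $\OO_D$ to the order $\OO_\Delta$ of conductor $f$. Concretely, the starting point is the identity (see \cite[Lemma~4.1]{Ha10}, which records the consequence of \cite{Colmez,NT}) that for an elliptic curve $E$ with CM by $\OO_\Delta$,
\begin{equation*}
\height_F(E) = -\frac12\log\left(\frac{|\Delta|}{4\pi^2}\right)^{1/2} + \text{(a term built from } L\text{-values and local factors at primes dividing }f\text{)} ,
\end{equation*}
or rather the cleaner version which expresses $\height_F(E)$ for CM by $\OO_D$ via the logarithmic derivative of $L(s,\chi_D)$ at $s=0$ (the Chowla--Selberg / Lerch formula packaged à la Colmez), and then a correction term for non-maximal conductor.

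\smallskip

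\textbf{Step 1: The maximal-order case.} First I would recall the precise value of $\height_F(E_0)$ when $E_0$ has CM by the maximal order $\OO_D$, namely the Colmez formula expressing it in terms of $\tfrac{L'(0,\chi_D)}{L(0,\chi_D)}$ plus elementary terms $-\gamma - \tfrac12\log(2\pi)$ and $\tfrac14\log|D|$ (matching the $\tfrac{1}{4\sqrt5}$-type main term only after the $L$-value is bounded below). To produce the clean lower bound $\tfrac{1}{4\sqrt5}\log|D| - \gamma - \tfrac12\log(2\pi)$, I would invoke an explicit lower bound for $\tfrac{L'(0,\chi_D)}{L(0,\chi_D)}$ of the shape $\ge -\lambda\log|D|$ with $\lambda = \tfrac12 - \tfrac1{2\sqrt5}$ as defined in \eqref{elambda}; this is exactly the kind of estimate underlying \eqref{efaltlowerold} in \cite{HJM:6}, and the constant $\tfrac1{4\sqrt5} = \tfrac14 - \tfrac{\lambda}{2}$ is what emerges.

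\smallskip

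\textbf{Step 2: The conductor correction.} Next I would use the known comparison between the Faltings height of a curve with CM by $\OO_\Delta$ and one with CM by $\OO_D$: passing to conductor $f$ changes the period lattice, and the resulting change in $\height_F$ is
\begin{equation*}
\height_F(E) - \height_F(E_0) = \frac12\log f \;-\; \sum_{p\mid f}\frac{\ord_p(f)\log p}{?}\cdot(\text{local factor}),
\end{equation*}
which, after the standard computation (this is where the Eisenstein-series / isogeny-degree argument enters, giving a factor $\tfrac{1}{p+1}(1-\chi_D(p)p^{-1})$ type weight at each $p$), contributes precisely $\lambda\log f - \beta(f)$ with $\beta$ as defined in \eqref{ebetadelta}. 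The point is that $\beta(p^k)$ is exactly the main term of the $p$-local contribution to $\log$ of the index, and $\lambda\log f$ absorbs the leading $\tfrac12\log f$ together with the part of the $L$-value bound that scales with $f$; the definition of $\delta(n) = \lambda\log n - \beta(n)$ in the preceding lemma is tailored so that $\tfrac14\log|\Delta| = \tfrac14\log|D| + \tfrac12\log f$ splits correctly into $\tfrac{1}{4\sqrt5}\log|\Delta| + \lambda\log f$ plus the part handled by the $L$-bound.

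\smallskip

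\textbf{Step 3: Assembly.} Adding the maximal-order bound of Step~1 and the conductor correction of Step~2, and using $|\Delta| = |D|f^2$, yields
\begin{equation*}
\height_F(E) \ge \frac{1}{4\sqrt5}\log|\Delta| + \lambda\log f - \beta(f) - \gamma - \frac{\log(2\pi)}{2},
\end{equation*}
which is \eqref{efaltlowernewf}. Then Lemma~\ref{ltechn} (that $\delta(f) = \lambda\log f - \beta(f) \ge \delta(2) = (\tfrac16 - \tfrac1{2\sqrt5})\log 2$) immediately upgrades this to \eqref{efaltlowernew}, giving Proposition~\ref{pfaltlower}. The main obstacle I anticipate is Step~1--Step~2: getting the numerically \emph{sharp} lower bound on $\tfrac{L'(0,\chi_D)}{L(0,\chi_D)}$ (and its conductor analogue) with the exact constant $\lambda$ rather than a weaker absolute constant, since \eqref{efaltlowerold} from \cite{HJM:6} already has the right shape but a too-lossy additive constant $-5.93$; tightening it to $-\gamma - \tfrac12\log(2\pi)$ requires tracking every term in the Chowla--Selberg expansion and the Euler-product estimate for $L(1,\chi_D)$ with no slack, and correctly matching the archimedean normalization (Deligne's) used for $\height_F$.
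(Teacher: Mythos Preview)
Your overall architecture (Colmez--Nakkajima--Taguchi formula for the maximal order, plus a conductor correction, plus assembly) matches the paper's, but you misidentify where the work lies and you skip the one genuinely new step.

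The intermediate inequality you need from \cite{HJM:6} is already sharp in the constants: in the proof of their Lemma~14 one finds
\[
\height_F(E)\ \ge\ \frac{1}{4\sqrt5}\log|D|\ +\ \frac12\log f\ -\ c(f)\ -\ \gamma\ -\ \frac{\log(2\pi)}{2},
\]
where
\[
c(f)=\frac12\sum_{p\mid f}\frac{1-\chi(p)}{p-\chi(p)}\cdot\frac{1-p^{-\ord_p(f)}}{1-p^{-1}}\log p,\qquad \chi=(D/\cdot).
\]
The weak constant $-5.93$ in \eqref{efaltlowerold} does \emph{not} come from slack in the $L$-function estimate; it comes from subsequently absorbing the $f$-dependent terms $\tfrac12\log f - c(f)$ into a crude uniform constant. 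So your ``main obstacle'' in Step~1 is a non-issue: there is nothing to tighten in the Chowla--Selberg part.

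What you are missing is the actual content of the paper's proof. Rewriting $\tfrac{1}{4\sqrt5}\log|D|+\tfrac12\log f=\tfrac{1}{4\sqrt5}\log|\Delta|+\lambda\log f$ (since $|\Delta|=|D|f^2$ and $\lambda=\tfrac12-\tfrac{1}{2\sqrt5}$) gives
\[
\height_F(E)\ \ge\ \frac{1}{4\sqrt5}\log|\Delta|+\lambda\log f - c(f)-\gamma-\frac{\log(2\pi)}{2},
\]
and the only remaining task is to show $c(f)\le\beta(f)$. This is the key new inequality, and it follows from the elementary observation that $\dfrac{1-\chi(p)}{p-\chi(p)}\le \dfrac{2}{p+1}$ for every prime $p$ (check the three cases $\chi(p)\in\{-1,0,1\}$). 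Your Step~2 asserts that the conductor correction ``contributes precisely $\lambda\log f-\beta(f)$'', but that is not true: the exact contribution involves $c(f)$, which depends on $\chi_D$, and $\beta(f)$ is merely a $\chi$-independent upper bound for it. Identifying $c(f)$ and proving $c(f)\le\beta(f)$ is the whole proof; without it your Step~2 is an assertion, not an argument.
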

Since 
$$
\lambda \log f- \beta(f)\ge-\left(\frac1{2\sqrt5}-\frac16\right)\log2
$$
by Lemma~\ref{ltechn}, this implies Proposition~\ref{pfaltlower}. 

\begin{proof}[Proof of Proposition~\ref{pfaltlowerf}]
Write ${\Delta=Df^2}$ with~$D$ the fundamental discriminant and~$f$ the conductor. Define 
$$
e_f(p) = \frac{1-\chi(p)}{p-\chi(p)}\frac{1-p^{-\ord_p(f)}}{1-p^{-1}}, \qquad c(f) = \frac 12 \left(\sum_{p\mid f} e_f(p)\log p\right),
$$
where ${\chi(\cdot)=(D/\cdot)}$ is Kronecker's symbol.

In the proof of Lemma~14 of~\cite{HJM:6}\footnote{Note that our~$D$ is written~$\Delta$ in~\cite{HJM:6}.},
the stable Faltings height of~$E$  is estimated as 
\begin{align*}
\height_F(E) &\ge \frac{1}{4\sqrt5}\log|D| + \frac 12 \log f
-c(f)
-\gamma -\frac{\log(2\pi)}{2},\\
&=\frac{1}{4\sqrt5}\log|\Delta| + \lambda \log f
-c(f)
-\gamma -\frac{\log(2\pi)}{2}.
\end{align*}
Thus, to establish~\eqref{efaltlowernewf}, we only have to prove that ${c(f)\le \beta(f)}$. 
We have 
$$
\frac{1-\chi(p)}{p-\chi(p)} = 
\begin{cases}
0, &\chi(p)=1,\\
1/p, &\chi(p)=0,\\
 2/(p+1), &\chi(p)=-1. 
\end{cases}
$$
Hence
$$
\frac{1-\chi(p)}{p-\chi(p)} \le \frac2{p+1}
$$
in any case. This implies that ${c(f)\le \beta(f)}$. 
The proposition is proved. 
\end{proof}

\section{The estimate \texorpdfstring{${|\Delta|< 10^{15}}$}{|Delta|<10to15}}
\label{stenfourteen}

In this section we obtain the first explicit upper bound for the
discriminant of a singular unit.

\begin{theorem}
  \label{thhighrange}
Let~$\Delta$  be the discriminant of a singular unit. Then
  ${|\Delta|<10^{15}}$. 
\end{theorem}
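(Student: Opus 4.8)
The plan is to combine the explicit upper bound for the height of a singular unit from Corollary~\ref{cupper} with the two explicit lower bounds from Propositions~\ref{ptriv} and~\ref{phlb2}, and then check that the resulting inequality forces $|\Delta|<10^{15}$. Throughout we may assume $|\Delta|\ge 10^{14}$, since otherwise there is nothing to prove; in particular $\Delta\ne -3,-4$ and all the cited results apply. Write $C=\CC(\Delta)$, $F=F(\Delta)$, $A=F\log|\Delta|$, and $L=\log|\Delta|$, so that $L\ge 14\log 10$.

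First I would record the upper bound. By Corollary~\ref{cupper},
\begin{equation*}
\height(\alpha)\le \frac{12A}{C}+3\log\frac{A|\Delta|^{1/2}}{C}-3.77.
\end{equation*}
Next I would play off the two lower bounds against it. The ``easy'' bound, Proposition~\ref{ptriv}, gives $\height(\alpha)\ge (\pi|\Delta|^{1/2}-0.01)/C$; combined with the upper bound this yields
\begin{equation*}
\frac{\pi|\Delta|^{1/2}-0.01}{C}\le \frac{12A}{C}+3\log\frac{A|\Delta|^{1/2}}{C}-3.77,
\end{equation*}
which, after multiplying by $C$ and using $\log x\le x/e$ type estimates (or simply that $3C\log(A|\Delta|^{1/2}/C)$ is small compared with $|\Delta|^{1/2}$ once $C$ itself is not too large), forces a bound of the shape $|\Delta|^{1/2}\ll A=F\log|\Delta|$. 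Using the trivial bound $F\le |\Delta|^{1/2}/\pmin$ is far too weak, so instead I would feed in the effective upper bound on $F$ coming from Robin's estimate for $\omega(n)$ (the one alluded to in the introduction and used to control $F$): for $a\le |\Delta|^{1/2}$ one has $2^{\omega(a)}\le |\Delta|^{o(1)}$ with an explicit Robin-type constant, so $F\le \exp\bigl(c\,L/\log L\bigr)$ for an explicit $c$. Plugging this in turns $|\Delta|^{1/2}\ll F\log|\Delta|$ into $\tfrac12 L\le c\,L/\log L+\log L+O(1)$, i.e. $\log L\le 2c+o(1)$, an absolute upper bound on $L$ and hence on $|\Delta|$. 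Running the constants (with $c$ coming from Robin) is exactly where the stated numerical threshold $10^{15}$ is produced.

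The point of also having the ``hard'' bound, Proposition~\ref{phlb2}, namely $\height(\alpha)\ge \tfrac{3}{\sqrt5}\log|\Delta|-9.79$, is to handle the regime where $C$ is not pathologically small — there the $12A/C$ and $3\log(A|\Delta|^{1/2}/C)$ terms in Corollary~\ref{cupper} are themselves of size $\le |\Delta|^{o(1)}$ and one compares directly
\begin{equation*}
\frac{3}{\sqrt5}\log|\Delta|-9.79\le \frac{12A}{C}+3\log\frac{A|\Delta|^{1/2}}{C}-3.77,
\end{equation*}
again using the Robin bound on $F$ (hence on $A$) and the class number lower bounds implicit in the argument. Concretely I would split into two cases according to whether $C\le |\Delta|^{1/2}/(A\log|\Delta|)$ or not: in the first case the easy bound~\eqref{etrivbis} dominates and gives $|\Delta|^{1/2}\ll A\log|\Delta|$ as above; in the second case $C$ is large enough that $A/C$ and $\log(A|\Delta|^{1/2}/C)$ are $o(\log|\Delta|)$, and then the hard bound~\eqref{ecolmbis} gives $\tfrac{3}{\sqrt5}\log|\Delta|\le o(\log|\Delta|)$, impossible for $|\Delta|$ large. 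Either way one gets an explicit contradiction once $|\Delta|\ge 10^{15}$.

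The main obstacle is purely quantitative: one must pin down the explicit constant in the bound $F(\Delta)\le \exp(c\log|\Delta|/\log\log|\Delta|)$ — this is the role of Robin's numerically sharp estimate for $\omega(n)$ — and then carefully track all the additive constants ($-3.77$, $-9.79$, $\gamma$, $\log(2\pi)/2$, the $0.34$ and $18.54$ from Lemma~\ref{lf}, etc.) through the two-case comparison so that the crossover really occurs below $10^{15}$ rather than at some larger power of $10$. No new idea is needed beyond assembling Corollary~\ref{cupper}, Proposition~\ref{ptriv}, Proposition~\ref{phlb2} and the Robin bound; the work is in the bookkeeping of the inequalities and verifying monotonicity of the relevant one-variable functions of $\log|\Delta|$ on $[14\log 10,\infty)$.
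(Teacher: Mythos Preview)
Your ingredients are exactly the right ones --- Corollary~\ref{cupper}, Propositions~\ref{ptriv} and~\ref{phlb2}, and Robin's explicit bound on $\omega(n)$ --- and the overall philosophy (play the upper height bound against the two lower bounds, using whichever is stronger) is the paper's as well. The difficulty is in your execution of the comparison.

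Your case split at $C=|\Delta|^{1/2}/(A\log|\Delta|)$ does not work quantitatively at $|\Delta|=10^{15}$. In your Case~2 you assert that ``$A/C$ and $\log(A|\Delta|^{1/2}/C)$ are $o(\log|\Delta|)$'', but at the threshold this gives $A/C=A^2\log|\Delta|/|\Delta|^{1/2}$, and with Robin's bound one computes $A\approx 4.3\times 10^4$ at $|\Delta|=10^{15}$, hence $12A/C\approx 2.4\times 10^4$ --- vastly larger than the hard lower bound $\tfrac{3}{\sqrt5}\log|\Delta|-9.79\approx 36.5$. The asymptotic statement $A^2=o(|\Delta|^{1/2})$ only kicks in around $|\Delta|\sim 10^{21}$, so your scheme would at best yield a bound of that order, not $10^{15}$. (Incidentally, the ``$\log x\le x/e$'' idea for controlling $3C\log(A|\Delta|^{1/2}/C)$ gives a term of size $A|\Delta|^{1/2}$, which is useless.)

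What the paper does instead is set
\[
Y=\max\Bigl\{\tfrac{\pi|\Delta|^{1/2}}{\CC(\Delta)},\ \tfrac{3}{\sqrt5}\log|\Delta|-9.78\Bigr\}
\]
and divide the inequality $Y\le 12A/\CC(\Delta)+3\log A+3\log(|\Delta|^{1/2}/\CC(\Delta))-3.76$ through by $Y$, producing three terms each $\ge 0$. The point is that \emph{each term} can be bounded using whichever lower bound on $Y$ is most convenient: the first term $12A/(\CC(\Delta)Y)\le 12\pi^{-1}A|\Delta|^{-1/2}$ uses the easy bound, the second uses the hard bound, and the third uses \emph{both} (first $|\Delta|^{1/2}/\CC(\Delta)\le \pi^{-1}Y$ in the numerator, then the hard bound and monotonicity of $(\log y)/y$). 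This simultaneous use of both lower bounds in a single inequality is precisely what lets the three contributions sum to less than $1$ already at $10^{15}$; a straight case split at your threshold forfeits this and loses several orders of magnitude. If you insist on a dichotomy, the correct crossover is where the two lower bounds coincide, namely $C\approx \pi|\Delta|^{1/2}/(\tfrac{3}{\sqrt5}\log|\Delta|)$, and then the verification becomes essentially equivalent to the paper's --- but the $Y=\max$ packaging is cleaner.
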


{\sloppy

Throughout this section~$\Delta$ is the discriminant of a singular unit~$\alpha$, and  we assume that  ${X=|\Delta|\ge 10^{15}}$, as otherwise there is nothing to prove.  
Our principal tools will be the upper estimate~\eqref{eupdel} 
and  the lower estimates~\eqref{etrivbis},~\eqref{ecolmbis}. We reproduce them here for convenience:
\begin{align}
\label{eupx}
\height(\alpha)&\le \frac{12A}{\CC(\Delta)}+3\log\frac{AX^{1/2}}{\CC(\Delta)}-3.77,\\
\label{etrivx}
\height(\alpha)&\ge  \frac{\pi X^{1/2}-0.01}{\CC(\Delta)},\\
\label{ecolmx}
\height(\alpha)&\ge \frac{3}{\sqrt5}\log X-9.79.
\end{align}
Note that our assumption ${X\ge 10^{15}}$ implies that the right-hand side of~\eqref{ecolmx} is positive.

}

\subsection{The main inequality}
\label{ssinequality}

Recall that ${A= F\log X}$. 
Minding~$0.01$ in~\eqref{etrivx} we deduce from~\eqref{eupx},~\eqref{etrivx} and~\eqref{ecolmx} the inequality
$$
\frac{12A}{\CC(\Delta)}+3\log\frac{AX^{1/2}}{\CC(\Delta)}-3.76 \ge \max\left\{\frac{\pi X^{1/2}}{\CC(\Delta)}, \frac{3}{\sqrt5}\log X-9.78\right\}. 
$$
Denoting
\begin{equation}
\label{ey}
Y=\max\left\{\frac{\pi X^{1/2}}{\CC(\Delta)}, \frac{3}{\sqrt5}\log X-9.78\right\}, 
\end{equation}
we re-write this as 
\begin{equation}
\label{enotyet}
\frac{12A/\CC(\Delta)}{Y}+\frac{3\log A-3.76}Y+ \frac{\log(X^{1/2}/\CC(\Delta))}{Y} \ge 1. 
\end{equation}
Note that ${3\log A-3.76>0}$, because ${A\ge  \log X\ge \log(10^{15}) >30}$. Hence we may replace~$Y$ by ${\frac{3}{\sqrt5}\log X-9.78}$ in the middle term of the left-hand side in~\eqref{enotyet}. Similarly, in the first term we may replace~$Y$ by ${\pi X^{1/2}/\CC(\Delta)}$, and in the third term we may replace ${X^{1/2}/\CC(\Delta)}$ by ${\pi^{-1}Y}$. We obtain 
\begin{equation}
\label{einequality}
12\pi^{-1} AX^{-1/2}+ \frac{3\log A-3.76}{\frac{3}{\sqrt5}\log X-9.78} + 3\frac{\log(\pi^{-1}Y)}{Y}\ge 1. 
\end{equation}


To show that~\eqref{einequality} is not possible for ${X\ge 10^{15}}$, we will bound from above each of the three terms in its left-hand side.  To begin with, we bound~$A$.

\subsection{Bounding~$F$ and~$A$}

Recall that ${F=\max\{2^{\omega(a)}: a\le X^{1/2}\}}$ and ${A=F\log X}$. 

Let  
${N_1 = 2\cdot3\cdot5\cdots 1129}$  be the product of the first $189$ prime numbers. Define the real number~$c_1$ from 
\begin{align*}
\omega(N_1)&=\frac{\log N_1}{\log\log N_1 -c_1}.
\end{align*}
A calculation shows that ${c_1 < 1.1713142}$. 
Robin  \cite[Théorème~13]{Ro83} proved that
\begin{equation*}
\omega(n) \le \frac{\log n}{\log\log n-c_1} 
\end{equation*}
for ${n\ge 26}$. 
This implies that 
\begin{align}
\label{eboundf}
\frac{\log F}{\log 2} &\le \frac12\frac{\log X}{\log\log X-c_1-\log2} ,\\
\label{ebounda}
\log A &\le \frac{\log2}2\frac{\log X}{\log\log X-c_1-\log2}+\log\log X. 
\end{align}
Indeed, the function 
$$
g(x)= \frac{\log x}{\log\log x-c_1}
$$
is strictly increasing for ${x\ge 6500}$ and ${g(6500)>8}$. 
If ${a\le X^{1/2}}$ then either ${a\le 6500}$ in which case 
${\omega(a) \le 5<g(6500)<g(X^{1/2})}$
(recall that ${X\ge 10^{15}}$), or ${6500<a\le X^{1/2}}$, in which case 
${\omega(a) \le g(a)\le g(X^{1/2})}$.
Thus, in any case we have 
$$
\omega(a) \le g(X^{1/2})= \frac12\frac{\log X}{\log\log X-c_1-\log2}, 
$$
which proves~\eqref{eboundf}. The estimate~\eqref{ebounda} is an immediate consequence of~\eqref{eboundf}.

\subsection{Bounding the first term in~\eqref{einequality}}
Using~\eqref{ebounda}, we estimate
$$
\frac{\log (AX^{-1/2})}{\log X}\le u_0(X), 
$$
where
\begin{align*}
u_0(x)&= \frac{\log2}2\frac{1 }{\log\log x-c_1-\log2}+\frac{\log\log x}{\log x}-\frac12. 
\end{align*}
The function $u_0(x)$ is decreasing for ${x\ge 10^{10}}$. Hence 
for ${X\ge 10^{15}}$ we have 
$$
\frac{\log (AX^{-1/2})}{\log X}\le u_0(10^{15}) <-0.1908. 
$$
This proves the estimate 
\begin{equation*}
AX^{-1/2}<  10^{15\cdot(-0.1908)}<0.0014
\end{equation*}
for ${X\ge 10^{15}}$.

\subsection{Bounding the second term in~\eqref{einequality}}
Using~\eqref{ebounda}, we estimate 
$$
\frac{3\log A-3.76}{\frac{3}{\sqrt5}\log X-9.78}\le u_1(X)u_2(X), 
$$
where
\begin{align*}
u_1(x) &= \frac{3\log2}2\frac{1}{\log\log x -c_1-\log2}+\frac{3\log\log x-3.76}{\log x},\\
u_2(x)& = \left(\frac{3}{\sqrt5}-\frac{9.78}{\log x}\right)^{-1}. 
\end{align*}
Both functions $u_1(x)$ and $u_2(x)$ are decreasing for ${x\ge 10^{10}}$. Hence, for ${X\ge 10^{15}}$ we have
\begin{equation*}
\frac{3\log A-3.76}{\frac{3}{\sqrt5}\log X-9.78}\le u_1(10^{15})u_2(10^{15}) <0.7734. 
\end{equation*}

\subsection{Bounding the third term in~\eqref{einequality}}

The function ${x\mapsto (\log x)/x}$ is decreasing for ${x\ge e}$. Since for ${X\ge 10^{15}}$ we have 
$$
\pi^{-1}Y\ge \pi^{-1}\left(\frac{3}{\sqrt5}\log X-9.78\right)\ge e,
$$
we have, for ${X\ge 10^{15}}$, the estimate 
$$
\frac{\log(\pi^{-1}Y)}{Y} \le u_3(X), 
$$
where 
$$
u_3(x) = \frac{\log\left(\pi^{-1}\bigl(\frac{3}{\sqrt5}\log x-9.78\bigr)\right)}{\frac{3}{\sqrt5}\log x-9.78}. 
$$
Moreover, the function $u_3(x)$ is decreasing for ${x\ge 10^{15}}$, which implies that 
\begin{equation*}
\frac{\log(\pi^{-1}Y)}{Y}\le u_3(X) \le u_3(10^{15}) < 0.0672
 \end{equation*}
for ${X\ge 10^{15}}$. 

\subsection{Summing up}

Now, when ${X\ge 10^{15}}$, we can combine the above estimates and bound the left-hand side of~\eqref{einequality} by
$$
12\pi^{-1}\cdot0.0014 + 0.7734 + 3 \cdot 0.0672 <0.981.
$$
Hence, for ${X\ge 10^{15}}$ we cannot have~\eqref{einequality}. This contradiction completes the proof of Theorem~\ref{thhighrange}. 


\section{Handling the mid-range \texorpdfstring{$10^{10}\le |\Delta|< 10^{15}$}{10to10<Delta<10to15}}
\label{sec:midrange}


In this section we rule out the existence of singular units with discriminants in the
mid-range ${[10^{10}, 10^{15})}$, improving thereby the bound from the previous section.

\begin{theorem}
  \label{thmidrange}
Let~$\Delta$  be the discriminant of a singular unit. Then
  ${|\Delta|\notin [10^{10},10^{15})}$. 
\end{theorem}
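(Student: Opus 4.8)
\textbf{Plan of the proof of Theorem~\ref{thmidrange}.}
The strategy is to re-run the argument of Section~\ref{stenfourteen}, but with the crude bounds on $F$ (hence on $A$) replaced by the much sharper information available once we know $|\Delta|<10^{15}$. The point is that inequality~\eqref{einequality} was derived without using anything special about the size of $X$ beyond $X\ge10^{15}$; its left-hand side is a sum of three explicit decreasing functions of $X$, and what prevented us from pushing the bound below $10^{15}$ was purely the weakness of Robin's estimate~\eqref{ebounda} for $\log A$ in this smaller range. So first I would revisit the quantity $F=\max\{2^{\omega(a)}:a\le|\Delta|^{1/2}\}$: for $|\Delta|<10^{15}$ we have $|\Delta|^{1/2}<2\cdot10^7$, and the maximal value of $\omega(a)$ for $a\le 2\cdot 10^7$ is attained at the primorial $2\cdot3\cdot5\cdot7\cdot11\cdot13\cdot17\cdot19=9{,}699{,}690\le 2\cdot10^7$ (the next primorial $223{,}092{,}870$ exceeds the bound), so in fact $\omega(a)\le 8$, giving $F\le 2^8=256$ and $A=F\log X\le 256\log(10^{15})$ throughout the mid-range. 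More generally, $\omega(a)\le\omega^\ast(|\Delta|^{1/2})$ where $\omega^\ast(t)$ is the largest $k$ with $p_1\cdots p_k\le t$; this is a step function that one tabulates explicitly on $[10^5,2\cdot10^7]$.

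Second, I would substitute this sharp, essentially constant, bound on $A$ back into~\eqref{einequality}. With $\log A$ now bounded by roughly $\log(256\log(10^{15}))\approx 8.0$ rather than by the large expression in~\eqref{ebounda}, the second term $\dfrac{3\log A-3.76}{\frac{3}{\sqrt5}\log X-9.78}$ becomes considerably smaller; likewise the first term $12\pi^{-1}AX^{-1/2}\le 12\pi^{-1}\cdot256\cdot X^{-1/2}$ shrinks rapidly, and the third term $3\log(\pi^{-1}Y)/Y$ is already under control from Section~\ref{stenfourteen}. The plan is to show that with these replacements the left-hand side of~\eqref{einequality} drops below $1$ already for all $X\ge 10^{10}$, exactly as in the ``Summing up'' subsection; since~\eqref{einequality} is forced for any singular unit, this yields the contradiction. (Equivalently, one partitions $[10^{10},10^{15})$ into a few dyadic-type blocks $[10^k,10^{k+1})$, uses the relevant value of $\omega^\ast$ on each, and checks the inequality block by block — this is the cleanest way to keep the estimates honest, since $\omega^\ast(\cdot)$ does jump a couple of times in this range.)

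The main obstacle I anticipate is bookkeeping at the low end of the range: near $X=10^{10}$ one has $X^{1/2}=10^5$, so $\omega^\ast(10^5)$ is only about $7$ or $8$ and the gain over Section~\ref{stenfourteen} is real but not enormous, so one must verify that the three-term sum genuinely stays below $1$ there and not merely below, say, $1.05$; the functions $u_0,u_1u_2,u_3$ from Section~\ref{stenfourteen} must be re-examined for monotonicity on $[10^{10},10^{15})$ (they were only asserted decreasing for $x\ge 10^{10}$, which is exactly what we need) and evaluated at the left endpoint of each block. A secondary point to watch is that the ``nearly optimal'' choice $\eps=0.27\,\CC(\Delta)/(A|\Delta|^{1/2})$ underlying~\eqref{eupdel} still satisfies $\eps\le 4\cdot10^{-3}$; with the new bound $F\ge$ (something modest) this needs a quick recheck, but since $\CC(\Delta)\le \pi^{-1}|\Delta|^{1/2}(2+\log|\Delta|)$ by Lemma~\ref{lcd} the verification is the same one-line computation as before. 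Once these checks pass, the contradiction with~\eqref{einequality} closes the mid-range and proves Theorem~\ref{thmidrange}; the remaining reduction to $|\Delta|<10^{10}$ (and the more refined sums $\sum 2^{\omega(n)}$ over subintervals of $[1,2\cdot10^7]$) is what Proposition~\ref{proposition::selbergdelange} is for, but that belongs to the next section rather than to this theorem.
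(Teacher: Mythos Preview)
Your plan has a genuine gap: the numerical margin is not there. At the low end $X=10^{10}$ you have $|\Delta|^{1/2}=10^5$, and since $2\cdot3\cdot5\cdot7\cdot11\cdot13=30030\le 10^5<510510$ the maximal $\omega$ is~$6$, so $F=64$ (not ``about $7$ or $8$''). Then $A=64\log(10^{10})\approx 1474$, $\log A\approx 7.30$, and the three terms of~\eqref{einequality} come out roughly
\[
12\pi^{-1}AX^{-1/2}\approx 0.056,\qquad
\frac{3\log A-3.76}{\tfrac{3}{\sqrt5}\log X-9.78}\approx \frac{18.13}{21.12}\approx 0.858,\qquad
3\,\frac{\log(\pi^{-1}Y)}{Y}\approx 0.271,
\]
with sum $\approx 1.19>1$. (At $X=10^{12}$ one still gets $\approx 1.01$.) So the ``bookkeeping obstacle'' you flag is fatal, not cosmetic: replacing $2^{\omega(a)}$ by its maximum $F$ over the interval $I$ is simply too wasteful in this range, no matter how sharp the bound on $F$ is. There is also a secondary issue you pass over: Corollary~\ref{cupper} (hence~\eqref{eupx} and~\eqref{einequality}) was proved under the hypothesis $|\Delta|\ge 10^{14}$, via Corollary~\ref{cseps} and Lemma~\ref{lsigzersimple}, so the inequality is not even available as stated for $X\in[10^{10},10^{14})$.

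Your last sentence locates Proposition~\ref{proposition::selbergdelange} in ``the next section'', but in fact it is the engine of \emph{this} theorem. The paper does not refine the bound on $F$; it abandons the max-order estimate altogether and instead controls the \emph{average} of $2^{\omega(a)}$ over the short interval $I$ using the computer-verified approximation $S(x)\approx \lambda_0 x\log x+\lambda_1 x$ of Proposition~\ref{proposition::selbergdelange}. This yields the new $\CC_\eps(\Delta)$ bound of Proposition~\ref{prop:Cepsmidrange}, whose leading term behaves like $\eps|\Delta|^{1/2}\log|\Delta|$ rather than $\eps F|\Delta|^{1/2}$ --- a genuine saving of a factor $F/\log|\Delta|$. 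That bound is then fed directly into Theorem~\ref{theceps} (not Corollary~\ref{cupper}), with the concrete choice $\eps=10^{-4}$, to produce the contradiction on all of $[10^{10},10^{15})$.
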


In Section~\ref{sceps} we estimate trivially ${2^{\omega(a)}\le F}$. One might expect to do better by estimating the average order rather than the maximal order of the arithmetical function ${2^{\omega(n)}}$. This is accomplished in Subsection~\ref{ssdelange} and  allows us to obtain, in Subsection~\ref{ssnewceps}, a new bound for ${\CC_\eps(\Delta)}$ in the range ${10^{10}\le |\Delta|< 10^{15}}$. Using this, Theorem~\ref{thmidrange} is proved in Subsection~\ref{ssproofmidrange} by  an argument similar to the proof of Theorem~\ref{thhighrange}. 

Throughout this section~$n$ denotes a positive integer. 

\subsection{Average order of the function $2^{\omega(n)}$ on subintervals of ${[0,2\cdot10^7]}$}
\label{ssdelange}
For a positive  real number~$x$ set 
$$
S(x)= \sum_{n\le x} 2^{\omega(n)}.
$$
We define $S(0)=0$.
As Theorem II.6.1 from~\cite{Te15} suggests, the function $S(x)$ can be well approximated by the function
$$
g(x)= \lambda_0x\log x+\lambda_1 x,
$$
where 
\begin{align*}
\lambda_0&= \zeta(2)^{-1}=0.607927101854026\dots,\\
\lambda_1 &= -2\frac{\zeta^{\prime}(2)}{\zeta(2)^{2}}+\frac{2\gamma-1}{\zeta(2)}=0.786872460166245\dots,
\end{align*}
and~$\gamma$ is the Euler constant. The function~$g$ is increasing on
$[1,\infty)$. 


As already mentioned in the introduction, the error term ${|S(x)-g(x)|}$ can be estimated by the Selberg-Delange method~\cite[Chapter~II.5]{Te15}, but on our
limited range it is more advantageous to obtain an optimal error term by a computer-assisted
calculation.

\begin{proposition}
\label{proposition::selbergdelange}
For ${2\le x \le 2\cdot 10^7}$ we have 
\begin{equation}
\label{esqrt}
g(x)- 1.010x^{1/2} \le S(x) \le g(x)+ 0.712x^{1/2}, 
\end{equation}
and for ${4\cdot10^4\le x\le 2\cdot 10^7}$ we have 
\begin{equation}
\label{esqrtlog}
g(x)- 2.267\frac{x^{1/2}}{\log x} \le S(x) \le g(x)+ 2.598\frac{x^{1/2}}{\log x}. 
\end{equation}
\end{proposition}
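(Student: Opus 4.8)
The plan is to verify the two chains of inequalities by a direct but rigorous computer-assisted calculation using interval arithmetic, exploiting that the relevant range of $x$ is bounded by $2\cdot10^7$. The key observation is that $S(x)$ is a step function that only jumps at integers, so it suffices to control $S(n)$ at integer points $n\le 2\cdot10^7$ and then interpolate. First I would note that $g(x)=\lambda_0 x\log x+\lambda_1 x$ is increasing on $[1,\infty)$, so for $x\in[n,n+1)$ one has $S(x)=S(n)$ while $g(n)\le g(x)< g(n+1)$; hence it is enough to check, for every integer $n$ with $1\le n\le 2\cdot10^7$, the slightly stronger discrete inequalities
\begin{align*}
g(n+1)-1.010\,n^{1/2}&\le S(n)\le g(n)+0.712\,(n+1)^{1/2},
\end{align*}
together with the analogous pair with $n^{1/2}/\log n$ and the constants $2.267$, $2.598$ for $n$ in the range $[4\cdot10^4,2\cdot10^7]$. (Monotonicity of $x^{1/2}$ and of $x^{1/2}/\log x$ on the relevant ranges lets one pass the error term across the unit interval in the direction that weakens the claim, and the small endpoint cases near $x=2$ or $x=4\cdot10^4$ are checked separately.)

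The next step is the actual computation. I would maintain the running value $S(n)$ by sieving: compute $\omega(m)$ for all $m\le 2\cdot10^7$ with a linear sieve, accumulate $S(n)=S(n-1)+2^{\omega(n)}$, and at each step evaluate $g(n)$ and $g(n+1)$ in interval arithmetic (the \textsf{MPFI} library) so that the printed bounds on $\lambda_0,\lambda_1$ and on $\log n$ are certified rather than heuristic. For each $n$ the code then checks that the interval enclosure of $S(n)-g(n)$ lies within $[-1.010\,n^{1/2},\,0.712\,n^{1/2}]$ (after the unit-interval adjustment above), and similarly for the sharper bound on the smaller range; if all $2\cdot10^7$ checks pass, the proposition follows. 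The constants $1.010$, $0.712$, $2.267$, $2.598$ are precisely the smallest round values for which the verification succeeds over the whole range, which is why they appear with three decimal places.

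The main obstacle is not mathematical depth but making the computation genuinely rigorous and reproducible: one must ensure that every floating-point operation is replaced by a correctly-rounded interval operation (in particular the evaluation of $\log n$ and the multiplications defining $g$), that the enclosures for $\lambda_0=\zeta(2)^{-1}$ and $\lambda_1=-2\zeta'(2)/\zeta(2)^2+(2\gamma-1)/\zeta(2)$ are certified to sufficient precision, and that no integer overflow occurs in accumulating $S(n)$ (which reaches size about $\lambda_0\cdot2\cdot10^7\log(2\cdot10^7)\approx 2\cdot10^8$, well within $64$-bit range, but $2^{\omega(n)}$ and the partial sums must be handled in exact integer arithmetic). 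Once the interval-arithmetic bookkeeping is set up correctly, the verification is a routine, if lengthy, sieve computation, and I would document the \textsf{SAGE}/\textsf{MPFI} script so the reader can rerun it. Smaller subranges (e.g.\ $x<2$, or $x$ near $4\cdot10^4$) are disposed of by the same loop with the appropriate choice of error function, so no separate argument is needed there.
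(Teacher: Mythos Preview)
Your approach is essentially identical to the paper's: reduce to integer points via the monotonicity of $g$ and the step-function nature of $S$, then run a certified interval-arithmetic loop in \textsf{SAGE}/\textsf{MPFI} over all $n\le 2\cdot10^7$; the paper phrases this as computing the four maxima $c_1=\max_n(S(n)-g(n))/\sqrt n$, $c_2=\max_n(g(n+1)-S(n))/\sqrt n$, and the analogous $c_3,c_4$ with $\sqrt n/\log n$. One small slip: in your upper discrete inequality the error term should be $0.712\,n^{1/2}$ rather than $0.712\,(n+1)^{1/2}$, since at $x=n$ the claim reads exactly $S(n)\le g(n)+0.712\,n^{1/2}$ and your weaker check would not cover that endpoint.
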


\begin{proof}
Set
\begin{align*}
c_1&=\max_{2\le n\le 2\cdot 10^7}\frac{S(n)-g(n)}{\sqrt n}, \\
c_2&=\max_{2\le n\le 2\cdot 10^7}\frac{g(n+1)-S(n)}{\sqrt n},\\
c_3&=\max_{4\cdot10^4\le n\le 2\cdot 10^7}\frac{(S(n)-g(n))\log n}{\sqrt n}, \\
c_4&=\max_{4\cdot10^4\le n\le 2\cdot 10^7}\frac{(g(n+1)-S(n))\log n}{\sqrt n}.
\end{align*}
Then for  ${2\le n\le 2\cdot 10^7}$ we have
$$
S(n) \le g(n)+c_1\sqrt n,   \qquad S(n) \ge g(n+1)-c_2\sqrt {n}.
$$
Hence for ${2\le x\le 2\cdot 10^7}$ we have 
\begin{align*}
S(x)&= S(\lfloor x \rfloor) \le g(\lfloor x \rfloor)+c_1\sqrt{\lfloor x\rfloor} \le g(x)+c_1\sqrt x,\\
S(x)&= S(\lfloor x \rfloor) \ge g(\lfloor x \rfloor+1)-c_2\sqrt{\lfloor x\rfloor} \ge g(x)-c_2\sqrt x.
\end{align*}
In a similar way 
we show that 
for ${4\cdot10^4\le x\le 2\cdot 10^7}$ 
$$
g(x)-c_4\frac{\sqrt x}{\log x}\le S(x) \le g(x)+c_3\frac{\sqrt x}{\log x}
$$
having used that $x\mapsto \sqrt{x}/\log x$ is increasing on
$(e^2,\infty)$.
A computer-assisted calculation shows that 
$$
c_1\le 0.712, \quad c_2\le 1.010, \quad
c_3\le 2.598, \quad c_4\le 2.267.
$$
We verify this by means of a
\textsf{SAGE}~\cite{sagemath} script\footnote{A link to the  script
  \textsf{prop6\_2.sage} is on  the second named
author's homepage. The running time is roughly 30 minutes on a regular
desktop (Intel Xeon CPU E5-1620 v3, 3.50GHz, 32GB RAM).} using the interval arithmetic \textsf{MPFI} package~\cite{MPFI}.
\end{proof}

\begin{corollary}
Let~$A$ and~$B$ be positive real numbers satisfying 
$$
0<A<B\le 2\cdot 10^7, \qquad B\ge 1.
$$
Then 
\begin{equation}
\label{esqrtab}
\sum_{A<n\le B}2^{\omega(n)}\le \lambda_0 (B-A)(1+\log B)+\lambda_1(B-A)+1.722B^{1/2}
\end{equation}
If, in addition to this, 
$
{A\ge 4\cdot10^4}, 
$
then 
\begin{equation}
\label{esqrtlogab}
\sum_{A<n\le B}2^{\omega(n)}\le \lambda_0 (B-A)(1+\log B)+\lambda_1(B-A)+4.865\frac{B^{1/2}}{\log B}. 
\end{equation}
\end{corollary}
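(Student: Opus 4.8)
The plan is to write $\sum_{A<n\le B}2^{\omega(n)}=S(B)-S(A)$ and feed in the two‑sided estimates for $S$ from Proposition~\ref{proposition::selbergdelange}, after first extending them to the range $x\in[0,2)$ that the proposition does not cover. Concretely, I would record that
\[
g(x)-1.010\,x^{1/2}\le S(x)\le g(x)+0.712\,x^{1/2}\qquad(0\le x\le 2\cdot10^7).
\]
For $x\ge2$ this is precisely~\eqref{esqrt}. For $0\le x<2$ one has $S(x)=0$ if $x<1$ and $S(x)=1$ if $1\le x<2$, so the claim reduces to $g(x)\le 1.010\,x^{1/2}$ on $[0,1)$ and $g(x)-1.010\,x^{1/2}<1\le g(x)+0.712\,x^{1/2}$ on $[1,2)$, both of which follow by inspecting the monotonicity of the elementary one‑variable functions involved.

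With this in hand, for $0<A<B\le2\cdot10^7$ and $B\ge1$ I would bound
\[
\sum_{A<n\le B}2^{\omega(n)}=S(B)-S(A)\le g(B)-g(A)+0.712\,B^{1/2}+1.010\,A^{1/2},
\]
and then use $A<B$ together with $0.712+1.010=1.722$ to replace the last two terms by $1.722\,B^{1/2}$. It then remains to check the purely algebraic inequality $g(B)-g(A)\le\lambda_0(B-A)(1+\log B)+\lambda_1(B-A)$. Writing $g(x)=\lambda_0x\log x+\lambda_1x$, the difference of the two sides simplifies to $\lambda_0\bigl((B-A)-A\log(B/A)\bigr)$, which is nonnegative because $\log t\le t-1$ with $t=B/A\ge1$ gives $A\log(B/A)\le A(B/A-1)=B-A$. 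This yields~\eqref{esqrtab}.

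For~\eqref{esqrtlogab} the stronger hypothesis $A\ge4\cdot10^4$ places both $A$ and $B$ in $[4\cdot10^4,2\cdot10^7]$, so~\eqref{esqrtlog} applies verbatim at $x=A$ and $x=B$ with no extension needed, giving
\[
\sum_{A<n\le B}2^{\omega(n)}\le g(B)-g(A)+2.598\,\frac{B^{1/2}}{\log B}+2.267\,\frac{A^{1/2}}{\log A}.
\]
Since $x\mapsto x^{1/2}/\log x$ is increasing on $(e^2,\infty)$, which contains $[4\cdot10^4,2\cdot10^7]$, one has $A^{1/2}/\log A\le B^{1/2}/\log B$; combining the two fractions with $2.598+2.267=4.865$ and invoking the same algebraic inequality for $g(B)-g(A)$ as above delivers~\eqref{esqrtlogab}.

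The one genuinely delicate ingredient I anticipate is the low‑range extension of the $S$‑bounds to $[0,2)$: it is elementary, but since the constants inherited from Proposition~\ref{proposition::selbergdelange} are close to optimal, the chain $g(x)-1.010\,x^{1/2}<1$ is tight near $x=2$ (there $g(2)-1.010\cdot2^{1/2}\approx0.988$), so that verification has to be done carefully rather than waved away. Everything else is bookkeeping, together with the elementary estimate $\log t\le t-1$.
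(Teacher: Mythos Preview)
Your proof is correct and follows essentially the same route as the paper: write the sum as $S(B)-S(A)$, apply the two-sided bounds from Proposition~\ref{proposition::selbergdelange}, and reduce to the elementary inequality $B\log B-A\log A\le(B-A)(1+\log B)$ (which you derive via $\log t\le t-1$ and the paper states as a separate remark). The only cosmetic difference is in the treatment of $A<2$: the paper splits into the cases $\lfloor A\rfloor=0$ and $\lfloor A\rfloor=1$ and verifies an ad~hoc inequality, whereas you extend the bounds of~\eqref{esqrt} down to $[0,2)$ once and then argue uniformly---arguably the tidier of the two, though equivalent in substance.
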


\begin{proof}
In general we have
\begin{equation*}
S(B)-S(A)=  \sum_{A<n\le B} 2^{\omega(n)} = \sum_{n =
    \lfloor{A}\rfloor+1}^{\lfloor{B}\rfloor}2^{\omega(n)} =
  S(\lfloor B\rfloor) - S(\lfloor A\rfloor).
\end{equation*}
Note that since $B>A>0$ we remark that
\begin{equation}
\label{eremark}
B\log B - A \log A \le (B-A)(1+\log B).
\end{equation}
When ${A\ge 2}$ estimate~\eqref{esqrtab} follows immediately
from~\eqref{esqrt} and~\eqref{eremark}.

Let us assume  that ${A<2}$, hence $\lfloor A\rfloor$ is $0$ or $1$
and $S(\lfloor A\rfloor)=0$ or $1$, respectively. For $B\ge 2$ we find
$$
\sum_{A<n\le B}2^{\omega(n)} \le \lambda_0 B\log
B+\lambda_1B+0.712B^{1/2} -S(\lfloor A\rfloor),
$$
and one easily verifies that 
$$
-S(\lfloor A\rfloor)+\lambda_0A(1+\log B) +\lambda_1A - \lambda_0B
\le 1.010B^{1/2}
$$
by considering the cases $A\in (0,1)$ and $A\in [1,2)$ separately.
  This implies (\ref{esqrtab}). 
And if ${B<2}$, then 
${\sum_{A<n\le B} 2^{\omega(n)}\le 1\le 1.722
B^{1/2}}$.  
As $B\ge 1$, again we obtain (\ref{esqrtab}). 

Finally, for ${4\cdot10^4\le A<B\le 2\cdot 10^7}$ estimate~\eqref{esqrtlogab}
follows 
from~\eqref{esqrtlog} and~\eqref{eremark}.  
\end{proof}

\subsection{Bounding $\CC_\eps(\Delta)$ for ${10^{10}\le|\Delta|< 10^{15}}$}
\label{ssnewceps}

Now we can obtain a cardinal refinement of Theorem~\ref{thceps}  for discriminants in the range ${10^{10}\le |\Delta|< 10^{15}}$.  We need a technical lemma using our
notation~\eqref{eq:standardnotation}. 

\begin{lemma}
  \label{lem:varsigma01}
  Let $n$ be an integer with $1\le n\le 3.2\cdot 10^7$.
  \begin{enumerate}[label={(\roman*)}]
  \item 
  \label{isigone} We have
    $\sigma_1(n)/n \le \sigma_1(21621600)/21621600= 3472/715$. 
  \item 
  \label{isigzero} We have
    $\sigma_0(n)\le 8.5 n^{1/4}$. 
  \end{enumerate}
\end{lemma}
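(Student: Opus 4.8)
The plan is to use that $\sigma_1(n)/n$ and $\sigma_0(n)/n^{1/4}$ are both multiplicative, so that each factors over the prime powers dividing $n$, and then to control the resulting local factors.

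For part~\ref{isigzero} this gives $\sigma_0(n)/n^{1/4}=\prod_{p^a\|n}(a+1)p^{-a/4}$. For every prime $p$ put $M_p=\max_{a\ge0}(a+1)p^{-a/4}$, which is finite because $(a+1)p^{-a/4}\to0$; since the ratio of consecutive terms equals $\frac{a+2}{a+1}p^{-1/4}$, each $M_p$ is attained at a small explicit exponent and is readily bounded, namely $M_2\le2.523$ (at $a=5$), $M_3\le1.755$ ($a=3$), $M_5\le1.342$ ($a=2$), $M_7\le1.230$, $M_{11}\le1.099$, $M_{13}\le1.054$ (at $a=1$ in the last three cases), while $M_p=1$ for every prime $p\ge17$ because then $(a+1)\le p^{a/4}$ for all $a\ge1$ — it is enough to check $a=1$, i.e.\ $p\ge16$, since $\frac{\log(a+1)}{a}$ is decreasing. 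As $M_p\ge1$ always, multiplying gives $\sigma_0(n)/n^{1/4}\le\prod_pM_p=\prod_{p\le13}M_p<2.523\cdot1.755\cdot1.342\cdot1.230\cdot1.099\cdot1.054<8.5$; in fact this holds for \emph{every} $n\ge1$, so the range restriction is not actually needed in~\ref{isigzero}. The bound is essentially sharp: equality up to rounding occurs at $n=2^5\cdot3^3\cdot5^2\cdot7\cdot11\cdot13=21621600\le3.2\cdot10^7$.

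Part~\ref{isigone} is harder, because the local factor $\sigma_1(p^a)/p^a=1+p^{-1}+\dots+p^{-a}$ is increasing in $a$ with supremum $p/(p-1)$, so $\sigma_1(n)/n$ is unbounded and the constraint $n\le N:=3.2\cdot10^7$ is genuinely needed. My plan is to first reduce the maximisation of $\sigma_1(n)/n$ over $n\le N$ to a short explicit list of candidates, using two observations. First, if a prime $p$ divides a maximiser $n$ while a smaller prime $q$ does not, then replacing $p^{v_p(n)}$ by $q^{v_p(n)}$ yields $n'<n$ with $\sigma_1(n')/n'>\sigma_1(n)/n$; hence the prime divisors of a maximiser form an initial segment $2,3,5,\dots,p_r$. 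Second, if the exponents are not non-increasing, say $v_{p_i}(n)<v_{p_j}(n)$ with $p_i<p_j$, then swapping these two exponents again produces a smaller $n'$ with strictly larger $\sigma_1/n$; this uses that, with $t=1/p$, the ratio $\frac{\sigma_1(p^b)/p^b}{\sigma_1(p^a)/p^a}=\frac{1+t+\dots+t^b}{1+t+\dots+t^a}$ is increasing in $t$ for $b>a$, which one sees by noting that $t$ times the logarithmic $t$-derivative of the numerator is a convex combination of the weighted means of the exponent sets $\{0,\dots,a\}$ and $\{a+1,\dots,b\}$, and therefore exceeds $t$ times that of the denominator, the mean of $\{0,\dots,a\}$. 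Consequently a maximiser has the shape $2^{a_1}3^{a_2}\cdots p_r^{a_r}$ with $a_1\ge a_2\ge\dots\ge a_r\ge1$; there are only finitely many such $n\le N$ (at most $8$ primes, all exponents small), and evaluating $\sigma_1(n)/n$ on this short list identifies $n=21621600$, with $\sigma_1(n)/n=\frac{63}{32}\cdot\frac{40}{27}\cdot\frac{31}{25}\cdot\frac87\cdot\frac{12}{11}\cdot\frac{14}{13}=\frac{3472}{715}$, as the maximiser.

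The main obstacle is the bookkeeping inside part~\ref{isigone}: the monotonicity-in-$p$ fact underpinning the exponent swap needs a careful (if elementary) justification, and the ensuing enumeration of the superabundant-shaped numbers below $3.2\cdot10^7$, although finite and routine, must be organised cleanly — or one may simply replace it by a direct machine verification that $\sigma_1(n)/n\le3472/715$ for all $n\le3.2\cdot10^7$, in keeping with the other computer-assisted checks of this paper. Part~\ref{isigzero}, by contrast, comes down to confirming the six numerical inequalities for the $M_p$ together with the elementary fact that $M_p=1$ for $p\ge17$.
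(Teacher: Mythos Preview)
Your argument is correct. The paper's own proof of this lemma is purely computational: it simply appeals to a direct computer check over $n\le 3.2\cdot10^7$, or alternatively to existing OEIS tables of superabundant numbers (\textsf{A004394}) and highly composite numbers (\textsf{A002182}). Your approach is genuinely more informative.

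For part~\ref{isigzero} you give a complete, elementary, computer-free proof via the multiplicativity of $\sigma_0(n)/n^{1/4}$ and the explicit evaluation of $M_p=\max_{a\ge0}(a+1)p^{-a/4}$ for $p\le 13$, together with $M_p=1$ for $p\ge 17$. The numerical values you quote are correct, and the product indeed comes out below $8.5$ (about $8.45$). As you observe, this actually establishes $\sigma_0(n)\le 8.5\,n^{1/4}$ for \emph{all} $n\ge 1$, which is stronger than the paper's range-restricted statement.

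For part~\ref{isigone} your two reductions---initial-segment primes and non-increasing exponents---are the classical moves showing that a maximiser of $\sigma_1(n)/n$ on $[1,N]$ must be superabundant in shape; your monotonicity argument for $t\mapsto\frac{1+t+\cdots+t^b}{1+t+\cdots+t^a}$ via weighted means is valid. This reduces the search from $3.2\cdot 10^7$ integers to at most a few dozen candidates, after which the enumeration (or, as you note, a direct machine check) finishes things off. The paper simply skips to the latter option. What your route buys is conceptual clarity and a drastically smaller residual computation; what the paper's route buys is brevity, since in this context a one-line appeal to a script or a table is acceptable.
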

\begin{proof}
This can be proved in at least two ways. In our relatively small range, we can
perform a quick computer-assisted calculation\footnote{A link to the \textsf{PARI} script \textsf{lemma6\_4.gp} is on the second-named
author's homepage. The running time is under 2 minutes on a regular
desktop (Intel Xeon CPU E5-1620 v3, 3.50GHz, 32GB RAM).}.  Alternatively, one can use the
\textit{On-Line Encyclopedia of Integer Sequences}~\cite{oeis}. Inspecting T. D. Noe and D. Kilminster's
table in entry \textsf{A004394}, one can deduce~\ref{isigone}. Similarly,~\ref{isigzero} follows from inspection of T. D. Noe's table in \textsf{A002182}.
\end{proof}

\begin{proposition}
  \label{prop:Cepsmidrange}
In the set-up of Theorem~\ref{thceps} assume that 
\begin{equation}
\label{emidrange}
{10^{10} \le |\Delta|< 10^{15}}.
\end{equation}
Then
\begin{align*}
\CC_\eps(\Delta) &\le (8\eps^2 + 0.811\eps)|\Delta|^{1/2} \log|\Delta|
+(28\eps^2 +2.829\eps)|\Delta|^{1/2}  \\
&\hphantom{\le}+89\eps {|\Delta|^{3/8}}+
31.06\frac{|\Delta|^{1/4}}{\log|\Delta|}.
\end{align*}
\end{proposition}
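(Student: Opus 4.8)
The plan is to revisit the proof of Theorem~\ref{thceps} and replace the crude bound ${2^{\omega(a)}\le F}$ used to pass from~\eqref{estim0} to~\eqref{estim1} by the sharp average-order estimates of the previous subsection. Recall that the argument there reduced $\CC_\eps(\Delta)$ to counting pairs $(a,b)$ with $a\in I\cap\Z$ and $b$ in one of two short intervals, where
$$
I=\left(\frac{|\Delta|^{1/2}}{\sqrt3+2\eps}, \frac{|\Delta|^{1/2}}{\sqrt3}\right],
$$
and for fixed $a$ the number of admissible $b$ is at most ${(4\eps\gcd_2(a,\Delta)+2)2^{\omega(a)+1}}$. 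So we still have, exactly as in~\eqref{estim0},
$$
\CC_\eps(\Delta)\le 8\eps\sum_{a\in I\cap\Z}{\gcd}_2(a,\Delta)\,2^{\omega(a)}+4\sum_{a\in I\cap\Z}2^{\omega(a)}.
$$
The second sum is directly handled by~\eqref{esqrtab}: since $I$ has length ${<\tfrac23|\Delta|^{1/2}\eps}$ and right endpoint ${|\Delta/3|^{1/2}<2\cdot10^7}$ (using~\eqref{emidrange}), we get ${\sum_{a\in I\cap\Z}2^{\omega(a)}\le \lambda_0\cdot\tfrac23|\Delta|^{1/2}\eps(1+\log|\Delta/3|^{1/2})+\lambda_1\cdot\tfrac23|\Delta|^{1/2}\eps+1.722|\Delta/3|^{1/4}}$, which after bounding $\lambda_0,\lambda_1$ numerically contributes the ${O(\eps|\Delta|^{1/2}\log|\Delta|)}$ and ${O(|\Delta|^{1/4})}$ terms (note $4\cdot 1.722/3^{1/4}$ feeds into the final $|\Delta|^{1/4}/\log|\Delta|$ constant after one further application of~\eqref{esqrtlogab} to upgrade the $\sqrt{}$ to $\sqrt{}/\log$).

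For the first sum, the key step is to break up by the quadratic divisor exactly as in~\eqref{estim2}: ${\sum_{a\in I\cap\Z}\gcd_2(a,\Delta)\,2^{\omega(a)}\le\sum_{d\mid\tilf}d\sum_{a\in I\cap d^2\Z}2^{\omega(a)}}$, where we used that $d^2\mid\Delta$ iff $d\mid\tilf$. Write ${a=d^2 m}$; then ${\omega(a)\le\omega(d^2)+\omega(m)=\omega(d)+\omega(m)\le 2^{\omega(d)}\cdot 2^{\omega(m)}}$, wait—more precisely ${2^{\omega(a)}\le 2^{\omega(d)}2^{\omega(m)}}$, and ${m}$ ranges over ${I/d^2\cap\Z}$, an interval of length ${<\tfrac{2}{3}|\Delta|^{1/2}\eps/d^2}$ contained in ${(0,2\cdot 10^7]}$. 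Applying~\eqref{esqrtab} to $\sum 2^{\omega(m)}$ and summing the resulting ${d\cdot 2^{\omega(d)}\cdot(\text{main term}/d^2+O(\text{error}/d))}$ over ${d\mid\tilf}$ introduces the sums ${\sum_{d\mid\tilf}2^{\omega(d)}/d}$ and ${\sum_{d\mid\tilf}2^{\omega(d)}d^{-1/2}}$ and ${\sum_{d\mid\tilf}2^{\omega(d)}}$. These I would bound by ${\sigma_1(\tilf)/\tilf}$-type quantities via the elementary inequality ${2^{\omega(d)}\le\sigma_0(d)}$, so that ${\sum_{d\mid\tilf}2^{\omega(d)}/d\le\sum_{d\mid\tilf}\sigma_0(d)/d}$ and similar; then Lemma~\ref{lem:varsigma01} (applicable since ${\tilf\le 2|\Delta|^{1/2}<3.2\cdot 10^7}$) gives the explicit constants ${3472/715<4.86}$ for the $\sigma_1/\cdot$ type bound and ${\sigma_0(\tilf)\le 8.5\tilf^{1/4}}$ for the counting-the-$d$-terms part, where ${\tilf\le 2|\Delta/3|^{1/2}}$ feeds the ${89\eps|\Delta|^{3/8}}$ term (since ${8.5\cdot 2^{1/4}\cdot 3^{-1/8}\cdot(\text{const})}$ times ${|\Delta|^{1/8}\cdot|\Delta|^{1/4}}$). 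Care is needed that, when one wants the sharper ${\sqrt{}/\log}$ error, the subintervals ${I/d^2}$ may have left endpoint below ${4\cdot 10^4}$ for the larger $d$; in that regime one falls back to~\eqref{esqrtab}, but those $d$ are few and small so the loss is absorbed.

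The main obstacle is bookkeeping: one must track three separate error contributions ($\eps^2|\Delta|^{1/2}\log|\Delta|$, $\eps|\Delta|^{1/2}$, and the lower-order $\eps|\Delta|^{3/8}$ and $|\Delta|^{1/4}/\log|\Delta|$ pieces) through the double summation over $d$ and $m$, repeatedly using ${10^{10}\le|\Delta|<10^{15}}$ to convert between powers of $|\Delta|$, $\log|\Delta|$, and $\log\log|\Delta|$ (e.g.\ to absorb a stray $\log|\Delta|$ into a constant, or to check the hypotheses ${B\le 2\cdot 10^7}$, ${A\ge 4\cdot 10^4}$ of the Corollary). Squeezing the constants down to the stated ${8\eps^2+0.811\eps}$ etc.\ requires being somewhat careful about where each of ${\lambda_0}$, ${\lambda_1}$, ${1.722}$, ${4.865}$, ${3472/715}$, ${8.5}$ enters and not double-counting the ${+4}$ and ${+1}$ constants from~\eqref{enewbound} and from~\eqref{esqrtab}; these are routine but must be done honestly. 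I would organize the write-up by first stating the two sums, then a paragraph for each, then collecting terms, using~\eqref{emidrange} at the end to fold residual lower-order terms into the four displayed terms.
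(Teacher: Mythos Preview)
Your overall strategy is right, but there is a genuine gap in the treatment of the first sum that will prevent you from reaching the stated constants.

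You start from~\eqref{estim0}, i.e.\ with ${2^{\omega(a)}}$ in the first sum, and then after writing ${a=d^2m}$ you bound ${2^{\omega(a)}\le 2^{\omega(d)}2^{\omega(m)}}$. This extra factor ${2^{\omega(d)}}$ is the problem: summing over ${d\mid\tilf}$ you obtain ${\sum_{d\mid\tilf}2^{\omega(d)}/d}$ in place of ${\sigma_1(\tilf)/\tilf}$, and ${\sum_{d\mid\tilf}2^{\omega(d)}}$ in place of ${\sigma_0(\tilf)}$. These are genuinely larger quantities. For instance, at the primorial ${\tilf=2\cdot3\cdots 19<3.2\cdot 10^7}$ one has ${\sum_{d\mid\tilf}2^{\omega(d)}/d=\prod_{p\le 19}(1+2/p)\approx 10}$, whereas ${\sigma_1(\tilf)/\tilf\approx 3.3}$; your leading constant would come out around~$16$ rather than~$8$. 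Bounding ${2^{\omega(d)}\le\sigma_0(d)}$ does not help, since ${\sum_{d\mid\tilf}\sigma_0(d)/d}$ has no useful bound by ${\sigma_1(\tilf)/\tilf}$.

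The missing idea is that Lemma~\ref{lrootsmoda} actually gives ${2^{\omega(a/\gcd(a,\Delta))+1}}$ classes, not merely ${2^{\omega(a)+1}}$; in the proof of Theorem~\ref{thceps} this was immediately weakened to ${2^{\omega(a)+1}}$, but here you must keep it. Since ${\gcd_2(a,\Delta)^2\mid\gcd(a,\Delta)}$, the first sum becomes
\[
8\eps\sum_{a\in I\cap\Z}{\gcd}_2(a,\Delta)\,2^{\omega(a/\gcd_2(a,\Delta)^2)}
\ \le\ 8\eps\sum_{d\mid\tilf}d\sum_{a'\in d^{-2}I\cap\Z}2^{\omega(a')},
\]
with \emph{no} factor ${2^{\omega(d)}}$. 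Now~\eqref{esqrtab} applied to the inner sum, followed by ${\sum_{d\mid\tilf}d^{-1}=\sigma_1(\tilf)/\tilf}$ and ${\sum_{d\mid\tilf}1=\sigma_0(\tilf)}$, together with Lemma~\ref{lem:varsigma01}, yields exactly the constants ${8}$, ${28}$, ${89}$. (Two minor points: ${\tilf\le|\Delta|^{1/2}}$, not ${2|\Delta|^{1/2}}$; and for the second sum one applies~\eqref{esqrtlogab} directly, since the left endpoint of~$I$ exceeds ${4\cdot10^4}$ throughout the range~\eqref{emidrange} --- there is no ``upgrade'' step.)
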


\begin{proof}
As in the proof of Theorem~\ref{thceps}, we
set
$$
I=\left(\frac{|\Delta|^{1/2}}{\sqrt3+2\eps}, \frac{|\Delta|^{1/2}}{\sqrt3}\right]. 
$$
We again want to count pairs of integers $(a,b)$ such that 
\begin{align*}
a\in I,\qquad b\in [-a,-a(1-2\eps))\cup (a(1-2\eps),a], \qquad b^2\equiv \Delta\bmod a. 
\end{align*}
Lemmas~\ref{lrootsmoda} and~\ref{ltrivi} 
imply that,  for
every fixed~$a$, the number of  suitable~$b$ 
does not exceed ${(4\eps\gcd_2(a,\Delta)+2) 2^{\omega(a/\gcd(a,\Delta))+1}}$.
Hence
\begin{equation}
\label{ethreesums}
\CC_\eps(\Delta) \le 8\eps\sum_{a\in I\cap\Z} {\gcd}_2(a,\Delta)
2^{\omega(a/\gcd_2(a,\Delta)^2)}
+ 4\sum_{a\in I\cap\Z}2^{\omega(a)}
\end{equation}
where we used that $\gcd_2(a,\Delta)^2$ divides $\gcd(a,\Delta)$. 
We estimate each of the terms separately.

\subsubsection*{Estimating the first term in~\eqref{ethreesums}}
Recall that $\Delta / \tilf^2$ is a square-free integer. We have 
\begin{equation}
\label{elefthand}
\sum_{a\in I\cap\Z} {\gcd}_2(a,\Delta) 2^{\omega(a/\gcd_2(a,\Delta)^2)} \le \sum_{d\mid \tilf}d\sum_{a\in I\cap d^2\Z}  2^{\omega(a/d^2)}.
\end{equation}
To estimate the inner sum, write ${a\in I\cap d^2\Z}$ as ${a=d^2a'}$ with 
$$
a'\in d^{-2}I = \left(d^{-2}\frac{|\Delta|^{1/2}}{\sqrt3+2\eps}, d^{-2}\frac{|\Delta|^{1/2}}{\sqrt3}\right].
$$
We estimate $\sum_{a'\in d^{-2}I\cap\Z} 2^{\omega(a')}$  using~\eqref{esqrtab} with
$$
A=d^{-2}\frac{|\Delta|^{1/2}}{\sqrt3+2\eps}, \quad B=d^{-2}\frac{|\Delta|^{1/2}}{\sqrt3}. 
$$ 
Since ${|\Delta|\le  10^{15}}$, we have ${B\le 2\cdot 10^7}$. From 
$$
B-A\le \frac23 d^{-2}|\Delta|^{1/2}\eps, \quad \log B \le \frac12\log |\Delta|-\frac12\log 3,
$$
we obtain 
$$
\sum_{a'\in d^{-2}I\cap \Z}  2^{\omega(a')}\le \frac{|\Delta|^{1/2}}{3d^2}(\lambda_0 \log |\Delta| +2\lambda_0+2\lambda_1-\lambda_0\log3)\eps+\frac{1.722|\Delta|^{1/4}}{\sqrt[4]{3}d}  
$$
as long as $B\ge 1$. If $B<1$, then the sum on the left is $0$ and the
inequality remains valid as the right side is clearly positive. 
Hence the left-hand side of~\eqref{elefthand} is bounded by 
\begin{align*}
\frac{|\Delta|^{1/2}}{3}(\lambda_0 \log |\Delta|+ 2\lambda_0+2\lambda_1-\lambda_0\log3)\eps\frac{\sigma_1(\tilf)}{\tilf}
+\frac{1.722|\Delta|^{1/4}}{\sqrt[4]{3}}\sigma_0(\tilf),
\end{align*}
where we use  notation~\eqref{eq:standardnotation} and the identity
$\sigma_1(\tilf)/\tilf = \sum_{d \mid \tilf} d^{-1}$. 

Recall that $\tilf \le |\Delta|^{1/2}\le 3.2\cdot 10^7$.
Hence Lemma \ref{lem:varsigma01} implies that
$$
\sigma_1(\tilf)/\tilf \le 
3472/715, \qquad \sigma_0(\tilf)\le 8.5 \tilf^{1/4} \le 8.5|\Delta|^{1/8}. 
$$
Taking into account the factor $8\eps$, the first term in~\eqref{ethreesums} is thus at most
\begin{alignat*}1
   |\Delta|^{1/2} &(8\log|\Delta| + 28)\eps^2 +
   89{|\Delta|^{3/8}}\eps.
\end{alignat*}

\subsubsection*{Estimating the second  term in~\eqref{ethreesums}}
Set
$$
A=\frac{|\Delta|^{1/2}}{\sqrt3+2\eps}, \qquad B=\frac{|\Delta|^{1/2}}{\sqrt3}.
$$ 
From~\eqref{emidrange} and ${0<\eps \le 1/3}$ we deduce ${A\ge 4\cdot10^4}$  and ${B\le
2\cdot 10^7}$. This allows us to apply~\eqref{esqrtlogab}, and we obtain
\begin{align*}
\sum_{a\in I\cap\Z}2^{\omega(a)} &\le \frac{|\Delta|^{1/2}}{3}(\lambda_0 \log |\Delta|+2\lambda_0+2\lambda_1 -\lambda_0\log3)\eps\\
&\hphantom{\le}+\frac{4.865|\Delta|^{1/4}}{\sqrt[4]{3}\log(|\Delta/3|^{1/2})}\\
 &\le \frac{|\Delta|^{1/2}}{3}(\lambda_0 \log
|\Delta|+2\lambda_0+2\lambda_1 -\lambda_0\log3)\eps\\
&\hphantom{\le}+\frac{2\cdot
  4.865|\Delta|^{1/4}}{\sqrt[4]{3}\log |\Delta|}
\frac{\log10^{10}}{\log(10^{10}/3)}.
\end{align*}
where  for the last estimate we used the assumption
 ${|\Delta|\ge
  10^{10}}$. 
Minding the factor~$4$ we find that 
the second term in~\eqref{ethreesums} is at most
\begin{equation*}
  |\Delta|^{1/2} (0.811 \log|\Delta| + 2.829 )\eps  + 31.06
   \frac{|\Delta|^{1/4}}{\log |\Delta|}. 
\end{equation*}
This concludes our proof of Proposition \ref{prop:Cepsmidrange}.
\end{proof}

\subsection{Proof of Theorem~\ref{thmidrange}}
\label{ssproofmidrange}
Suppose that~$\alpha$ is a singular unit of discriminant~$\Delta$
and set ${X=|\Delta|}$. Assuming  that ${10^{10}\le X< 10^{15}}$,
we arrive at a contradiction. 
As in   Section~\ref{stenfourteen}  we use the estimates~\eqref{etrivx} and~\eqref{ecolmx} which follow from Propositions~\ref{ptriv}
and~\ref{phlb2}, respectively. 

We may no longer use Corollary~\ref{cupper}, because its hypothesis ${X\ge 10^{14}}$ is not valid in our current range. Instead, we will apply Theorem~\ref{theceps}   directly now. For this, let
${\eps \in (0,4\cdot 10^{-3}]}$. 

{\sloppy

We define~$Y$ as in (\ref{ey}) and recall that ${Y\le 0.01 + \height(\alpha)}$. We find
\begin{equation}
  \label{eq:midrangeYbound}
  Y \le 3 \frac{\CC_\eps(\Delta)}{\CC(\Delta)}
  \log X + 3 \log(\eps^{-1}) - 10.65. 
\end{equation}
Using Proposition \ref{prop:Cepsmidrange} we find
\begin{alignat*}1
  1&\le 3 (8\eps^2 + 0.811\eps)
  \frac{X^{1/2}(\log X)^2}{Y\CC(\Delta)} + 3 {(28\eps^2 + 2.829
    \eps)}\frac{X^{1/2}\log X}{Y\CC(\Delta)}  \\&
\hphantom{\le}    +
3 \frac{89 \eps X^{3/8}\log X}{Y\CC(\Delta)}  
 + 3\cdot 31.06
  \frac{X^{1/4}}{Y\CC(\Delta)} 
+ \frac{3\log(\eps^{-1}) - 10.65}{Y}.
\end{alignat*}
For all but the final term on the right we use ${Y\CC(\Delta)\ge \pi X^{1/2}}$
and for the remaining term we use
${Y\ge \frac{3}{\sqrt{5}} \log X - 9.78 > 0}$, as ${X\ge  10^{5}}$,  
to get
\begin{equation}
  \label{eq:1lowerbound}\begin{aligned}
  1&\le \frac{3}{\pi}(8\eps^2 + 0.811\eps) (\log X)^2
  +\frac{3}{\pi}(28\eps^2 + 2.829\eps) \log X\\
  &\hphantom{\le}+  \frac{267}{\pi}\eps\frac{\log X}{X^{1/8}}  
  +\frac{93.18}{\pi X^{1/4}}
   +
    \frac{3\log(\eps^{-1})-10.65}{\frac{3}{\sqrt 5}\log X - 9.78}.
\end{aligned}
\end{equation}
Our choice is ${\eps = 
10^{-4}}$.
The first two terms in the right-hand side of~\eqref{eq:1lowerbound} are 
monotonously increasing, and the remaining three terms are  decreasing for ${X\in[10^{10}, 10^{15})}$; note that ${x\mapsto(\log x)/x^{1/8}}$ is
decreasing for ${x \ge 3000 > e^8}$. Using ${X < 10^{15}}$  for the first two terms and ${X\ge  2\cdot 10^{10}}$ for the
 remaining three terms, we see that the right-hand side of~\eqref{eq:1lowerbound} is strictly smaller than $0.962$ if
${X \in [2\cdot10^{10}, 10^{15})}$. Similarly, we infer that it is strictly smaller than $0.960$ if ${X \in [10^{10}, 2\cdot10^{10})}$. This completes the proof of Theorem~\ref{thmidrange}. 
\qed

}

\section{Handling the low-range \texorpdfstring{$3\cdot 10^{5}\le |\Delta|< 10^{10}$}{3.10to5<|Delta|<10to10}}
\label{slowrange}
We now deal with the low-range ${|\Delta|\in [3\cdot10^5, 10^{10})}$. For this range the upper
bound on $\CC_\eps(\Delta)$ arises from a computer-assisted search algorithm.

We prove the following. 
\begin{theorem}
  \label{thlowrange}
  Let~$\Delta$  be the discriminant of a singular unit. Then
 ${|\Delta|\notin [3\cdot10^5, 10^{10})}$. 
\end{theorem}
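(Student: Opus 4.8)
The plan is to follow the same three-term strategy that proved Theorems~\ref{thhighrange} and~\ref{thmidrange}, but replacing the analytic bound on $\CC_\eps(\Delta)$ (which is no longer strong enough in the low range, and whose computational inputs like \eqref{esqrtlogab} require $B\le 2\cdot10^7$) by a direct computer-assisted upper bound on $\CC_\eps(\Delta)$ valid for every $\Delta$ with $|\Delta|<10^{10}$. First I would fix a small $\eps$ (the text suggests $\eps=10^{-3}$ for the first pass) and, appealing to Lemmas~\ref{lshort} and~\ref{lgauss}, observe that every triple $(a,b,c)\in T_\Delta$ with $\tau(a,b,c)$ satisfying~\eqref{etauzeze} has $a$ in the short interval~\eqref{eboundinga}, $|b|$ close to $a$ by~\eqref{eboundingb}, and $c$ pinned down by $\Delta=b^2-4ac$; moreover $a\le|\Delta/3|^{1/2}<2\cdot10^7$ when $|\Delta|<10^{10}$. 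This means one can enumerate \emph{all} candidate $\tau$'s (over all discriminants in the range simultaneously) by looping over $a$ up to $\sqrt{10^{10}/3}$, then over the $O(\eps a)$ admissible $b$'s with $b^2\equiv\Delta\pmod{a}$ read off from $4ac=b^2-\Delta$ — equivalently looping over $(a,b,c)$ directly — computing $\Delta=b^2-4ac$, checking $|\Delta|<10^{10}$ and the conditions~\eqref{ekuh}, and incrementing a counter indexed by $\Delta$. This yields an array of verified upper bounds for $\CC_\eps(\Delta)$, which is the content of the counting lemma (Lemma~\ref{lem:lowrangeCebound}).

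Next I would feed these bounds into the main inequality exactly as in Subsection~\ref{ssproofmidrange}. Concretely: from Theorem~\ref{theceps} and $Y\le 0.01+\height(\alpha)$ one gets $Y\le 3\,\CC_\eps(\Delta)\CC(\Delta)^{-1}\log X+3\log(\eps^{-1})-10.65$ where $X=|\Delta|$; dividing by $Y$ and using $Y\CC(\Delta)\ge\pi X^{1/2}$ (from~\eqref{etrivx}) for the $\CC_\eps(\Delta)$-term and $Y\ge \tfrac{3}{\sqrt5}\log X-9.78>0$ (from~\eqref{ecolmx}) for the logarithmic term, one obtains an inequality of the shape
$$
1\le \frac{3\log X}{\pi X^{1/2}}\,\CC_\eps(\Delta)+\frac{3\log(\eps^{-1})-10.65}{\tfrac{3}{\sqrt5}\log X-9.78}.
$$
Since the computer has produced, for each $X$ in the range, an explicit numerical bound $\CC_\eps(\Delta)\le B(X)$, one checks that the right-hand side is $<1$ for all $|\Delta|\in[N,10^{10})$ for some threshold $N$. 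The text indicates the first pass (with $\eps\approx10^{-3}$) brings the bound down to $|\Delta|<10^7$, and then one repeats the whole procedure — recount $\CC_\eps(\Delta)$ on the now-smaller range $[3\cdot10^5,10^7)$, possibly with a slightly adjusted $\eps$ — to push the threshold to $3\cdot10^5$.

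I expect the main obstacle to be purely computational rather than conceptual: organizing the enumeration so that it is actually feasible. A naive per-discriminant computation of $\CC_\eps(\Delta)$ is as expensive as computing each class number $\CC(\Delta)$ separately over a range of size $10^{10}$, which is infeasible; the key trick (as the introduction explains) is to invert the order of the loops — iterate over imaginary-quadratic $\tau\in\FF$ near $\zeta_3$ or $\zeta_6$, i.e.\ over admissible triples $(a,b,c)$, and only \emph{afterwards} compute $\Delta(\tau)=b^2-4ac$ and bin it — so that the total work is proportional to the number of such $\tau$ (roughly $\sum_{a\le\sqrt{10^{10}/3}}O(\eps a)=O(\eps\cdot 10^{10})$ triples), which is manageable for $\eps=10^{-3}$. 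One must also be careful that the enumeration genuinely covers \emph{all} such $\tau$ (correct ranges for $a,b,c$ with all boundary cases from~\eqref{ekuh} and~\eqref{etauzeze} handled, and interval arithmetic or exact integer arithmetic used so the count is a rigorous upper bound), and that the arithmetic bounds~\eqref{etrivx} and~\eqref{ecolmx} apply — the latter needs $X\ge 3\cdot10^5$ or so to keep $\tfrac{3}{\sqrt5}\log X-9.78$ positive, which is exactly why the theorem stops at $3\cdot10^5$ and the remaining finitely many discriminants are left to Theorem~\ref{thm:habeggerspari}.
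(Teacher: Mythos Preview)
Your proposal is correct and essentially identical to the paper's proof: Lemma~\ref{lem:lowrangeCebound} is established by exactly the inverted-loop enumeration you describe (the paper loops over $c$, then $a$ close to $c$ via~\eqref{eboundingc}, then $b$ close to $a$ via~\eqref{eboundingb}, computes $\Delta=b^2-4ac$, and keeps per-discriminant counters), yielding uniform bounds $\CC_{10^{-3}}(\Delta)\le 16$ on $[10^7,10^{10})$ and $\CC_{4\cdot10^{-3}}(\Delta)\le 6$ on $[3\cdot10^5,10^7)$; these are then plugged into precisely the inequality you wrote to obtain the contradiction on each subrange.
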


The proof relies on the following lemma. 

\begin{lemma}
  \label{lem:lowrangeCebound}
  Let $\Delta$ be the discriminant of a singular modulus.
  \begin{enumerate}[label={(\roman*)}]
  \item 
  \label{itenten} If ${10^7\le|\Delta|<  10^{10}}$ and ${\eps = 10^{-3}}$, then
    ${\CC_\eps(\Delta)\le 16}$.
      \item 
      \label{itenseven}
      If ${3\cdot10^5\le|\Delta|< 10^7}$ and ${\eps = 4\cdot 10^{-3}}$, then
    ${\CC_\eps(\Delta)\le 6}$. 
  \end{enumerate}
\end{lemma}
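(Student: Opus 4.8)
The plan is to turn the bound into a finite enumeration via the triple parametrization of Lemma~\ref{lgauss}. Recall from the paragraph preceding the proof of Theorem~\ref{thceps} that
$$
\CC_\eps(\Delta)=\#\bigl\{(a,b,c)\in T_\Delta:\ \tau(a,b,c)\ \text{satisfies}\ \eqref{etauzeze}\bigr\},
$$
and that by Lemma~\ref{lshort} (which applies since $\eps\le 1/3$ in both cases) every such triple satisfies the explicit inequalities \eqref{eboundinga}, \eqref{eboundingb} and \eqref{eboundingc}. In particular $3a^2\le |\Delta|$, so $a<\sqrt{|\Delta|/3}$; hence $a<5.8\cdot10^4$ in case~\ref{itenten} and $a<1826$ in case~\ref{itenseven}. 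For a fixed $a$, \eqref{eboundingb} confines $b$ to roughly $4\eps a$ integers and \eqref{eboundingc} confines $c$ to roughly $\sqrt3\,\eps a$ integers, so the total number of candidate triples across all $\Delta$ in the range is only of order $\sum_{a\le A_{\max}}\eps^2 a^2\asymp \eps^2 A_{\max}^3$. This is the key gain: although $\CC_\eps(\Delta)$ for a \emph{single} $\Delta$ is about as expensive to compute directly as the class number, the triples themselves are sparse and can be listed cheaply and all at once.

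Concretely, I would run the following search. For each integer $a$ from $1$ to $\lceil\sqrt{(\sup|\Delta|)/3}\,\rceil$, and for each pair of integers $(b,c)$ with $a(1-2\eps)<|b|\le a$ and $a\le c<a(1+\sqrt3\,\eps+\eps^2)$ — the real loop bounds rounded \emph{outward} so that no admissible triple is missed — set $\Delta:=b^2-4ac$ and test whether $\gcd(a,b,c)=1$, whether the conditions \eqref{ekuh} hold, whether $|\Delta|$ lies in the prescribed interval, and whether \eqref{eboundinga}--\eqref{eboundingc} hold. Every one of these tests reduces to an integer (in)equality in $a,b,c,\Delta$: the bounds involving $\sqrt{|\Delta|}$ and $\sqrt3$ become polynomial inequalities over $\Q$ after clearing the denominators coming from the rational $\eps$ and (possibly repeatedly) squaring two nonnegative sides. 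Whenever all tests pass, increment a counter $N_\Delta$, stored with appropriate bookkeeping (a hash table or a sweep in increasing $a$), since a monolithic array indexed by all $\Delta$ in the range would be far too large. Because Lemma~\ref{lshort} supplies \emph{necessary} conditions, on termination $\CC_\eps(\Delta)\le N_\Delta$ for every $\Delta$ in the range, and the computation outputs $\max_\Delta N_\Delta\le 16$ in case~\ref{itenten} and $\max_\Delta N_\Delta\le 6$ in case~\ref{itenseven}. (Optionally one replaces the test \eqref{eboundinga}--\eqref{eboundingc} by the sharper $\min\{|\tau-\zeta_3|,|\tau-\zeta_6|\}^2<\eps^2$, which again becomes an integer inequality after clearing denominators and squaring, giving $\CC_\eps(\Delta)$ exactly; this is not needed for the stated bound.)

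The main point requiring care is the \emph{completeness} of the enumeration: one must be certain that every triple counted by $\CC_\eps(\Delta)$ for $|\Delta|$ in the relevant range is genuinely visited. This is guaranteed by Lemma~\ref{lshort}, which localizes each relevant triple inside the explicit box \eqref{eboundinga}--\eqref{eboundingc}, together with the outward rounding (or interval arithmetic) used for the loop limits, so that the set of enumerated triples is provably a superset of the relevant ones; the subsequent exact integer tests then discard only genuinely inadmissible triples and never a true one. The running time is of order $\eps^2 A_{\max}^3$, i.e.\ on the order of $10^9$ integer operations in case~\ref{itenten} and several orders of magnitude fewer in case~\ref{itenseven}, which is entirely feasible (for instance in \textsf{PARI}).
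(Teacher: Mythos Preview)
Your proposal is correct and is essentially the same computer-assisted enumeration as the paper's: both use Lemma~\ref{lshort} to localize all relevant triples $(a,b,c)$ in an explicit box of size $O(\eps^2 A_{\max}^3)$, visit every triple therein, recover $\Delta=b^2-4ac$, and tally per-discriminant counters whose maximum gives the stated bounds. The only cosmetic differences are that the paper orders the loops as $c,a,b$ (using $X_{\min}^{1/2}/2\le c\le X_{\max}^{1/2}$ from Lemma~\ref{lgauss}\ref{iadelta}), replaces the irrational loop limits by slightly weaker rational ones such as $0.998c\le a$ rather than squaring, counts only $b>0$ and doubles, and stores the counters in a plain array (about $5$~GB, or less after splitting the $\Delta$-range into blocks) rather than a hash table.
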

\begin{proof}
Let $X_{\min}$ and $X_{\max}$ be positive integers satisfying ${ X_{\min}<X_{\max}}$, and let ${\eps \in (0, 1/3]}$. We want to bound ${\CC_\eps(\Delta)}$ for all~$\Delta$ in the interval ${[-X_{\max},-X_{\min}]}$. 
 
Recall that $\CC_\eps(\Delta)$ counts the triples    $(a,b,c)$ satisfying~\eqref{ekuh} such that 
$$
\tau=\tau(a,b,c)= \frac{b + \sqrt{\Delta}}{2a}
$$
satisfies
  \begin{equation}
    \label{eq:zrestriction}
    \min\{|\tau-\zeta_3|,|\tau-\zeta_6|\} < \eps. 
  \end{equation}
Lemmas~\ref{lgauss}\ref{iadelta} and~\ref{lshort} imply that such triples satisfy 
\begin{align*}
\frac{|\Delta|^{1/2}}2 & \le \hphantom{|}c \hphantom{|}\le \frac{|\Delta|^{1/2}}{\sqrt3}(1+\sqrt3\eps+\eps^2),\\
c/(1+\sqrt3\eps+\eps^2)&<\hphantom{|}a\hphantom{|}\le c,\\
a(1-2\eps)&<|b|\le a. 
\end{align*}
Note that, since ${\eps \in (0, 1/3]}$, we have ${b\ne 0}$ and 
${(1+\sqrt3\eps+\eps^2)/\sqrt3<1}$. 

Hence, to bound 
$\CC_\eps(\Delta)$ on the interval ${[-X_{\max},-X_{\min}]}$,
it suffices, for every~$\Delta$ in this interval, to count the triples $(a,b,c)$ satisfying 
\begin{align*}
{X_{\min}^{1/2}}/2 & \le c \le X_{\max}^{1/2},\\
c/(1+\sqrt3\eps+\eps^2)&<a\le c,\\
a(1-2\eps)&<b\le a 
\end{align*}
and ${b^2-4ac=\Delta}$, and multiply the maximal count by~$2$ (because we counted only triples with positive~$b$). 
We phrase this counting procedure formally as   Algorithm~\ref{algo:CCDelta}.

For a correct implementation, we have to avoid floating point arithmetic in determining
the upper bounds on~$a$ and~$b$ used in the inner two for-loops. For this, we note that ${\eps = 10^{-3}}$
implies that 
$$
0.998 c \le c/(1+\sqrt3\eps+\eps^2), \qquad 0.998a=a(1-2\eps). 
$$
Similarly, ${\eps = 4\cdot 10^{-3}}$
implies that 
$$
0.993 c \le c/(1+\sqrt3\eps+\eps^2), \qquad 0.992a=a(1-2\eps). 
$$
As we are only interested in an upper bound on $\CC_\eps(\Delta)$ for these two specific values of~$\eps$, we use these
weaker rational bounds in our implementation of
Algorithm~\ref{algo:CCDelta}  by means of a
\textsf{C}-program\footnote{A link to our program
  \textsf{algorithm1.c} is on the
second-named author's homepage. The running time on a regular desktop
(Intel Xeon CPU E5-1620 v3, 3.50GHz, 32GB RAM) was under a minute for item~\ref{itenten} and a few milliseconds for item~\ref{itenseven}. Its memory
usage for ${[X_{\min},X_{\max}] = [1, 10^{10}]}$ is significant (5 GB) but this can be overcome by splitting
${[1, 10^{10}]}$ into subintervals and running the program separately for each interval. This feature
is also implemented in our program through the macro \textsf{DISC\underline{\ }BLOCK\underline{\ }SIZE}.}. It
verifies directly the assertions of the lemma.
\end{proof}

    \begin{algorithm}[t]{\footnotesize
      \SetKwInOut{Input}{Input}\SetKwInOut{Output}{Output}
      
    \Input{Two positive integers $X_{\mathrm{min}} <
      X_{\mathrm{max}}$ and $\eps > 0$}

    \Output{an upper bound for $\CC_\eps(\Delta)$ for all
      discriminants $\Delta\in [-X_{\mathrm{max}},-X_{\mathrm{min}}]$}
    \BlankLine

    $counter \leftarrow$ pointer to array of length
    $X_{\mathrm{max}}-X_{\mathrm{min}}+1$ initialized to $0$\;

    $bound \leftarrow 0$\;
    
    \For{$c\leftarrow \lfloor X_{\mathrm{min}}^{1/2}/2\rfloor$ \KwTo
      $\lfloor X_{\mathrm{max}}^{1/2}\rfloor$}{
      \For{$a\leftarrow \lfloor c/(1+\sqrt 3 \eps+\eps^2) \rfloor$ to $c$}{
        \For{$b\leftarrow \lfloor (1-2\eps)a\rfloor$ to $a$}{
          $X\leftarrow 4ac-b^2$\;
          \If{$X\ge X_{\mathrm{min}}$  and $X \le X_{\mathrm{max}}$
          }{$pos \leftarrow X-X_{\mathrm{min}}$\;
            $counter[pos] \leftarrow counter[pos]+2$\;
            \lIf{$counter[pos] > bound$}{$bound\leftarrow counter[pos]$}
            }
        }
    }}
    \Return{$bound$}\;
    \caption{Compute an upper bound for $\CC_{\epsilon}(\Delta)$ in the
      range ${\Delta\in [-X_{\mathrm{max}},-X_{\mathrm{min}}]}$}\label{algo:CCDelta}
      }%
    \end{algorithm}


\begin{proof}[Proof of Theorem~\ref{thlowrange}]
  Assume that~$\alpha$ is a singular unit of discriminant ${\Delta\in (-10^{-10}, -3\cdot10^5]}$.
Let ${0<\eps\le 4\cdot 10^{-3}}$. We set again~$Y$  as in (\ref{ey}).
  As in the proof of Theorem~\ref{thmidrange} we find (\ref{eq:midrangeYbound}).  
We infer that
  \begin{alignat*}1
    1 &\le 3\frac{\CC_\eps(\Delta)}{\CC(\Delta) Y} \log X +
    \frac{3\log(\eps^{-1}) - 10.65}{Y}\\
    & \le \frac {3\CC_\eps(\Delta)}{\pi} \frac{\log X}{X^{1/2}} +
    \frac{3\log(\eps^{-1}) - 10.65}{\frac{3}{\sqrt{5}}\log X - 9.78}
  \end{alignat*}
where we use ${\CC(\Delta)Y \ge \pi X^{1/2}}$
  and ${Y\ge \frac{3}{\sqrt{5}} \log X - 9.78}$. 

  If ${X\in [10^7,10^{10})}$ then we set ${\eps = 10^{-3}}$ and use the estimate
  ${\CC_{\eps}(\Delta) \le 16}$ from Lemma~\ref{lem:lowrangeCebound}\ref{itenten}. 
  Recall that ${x\mapsto (\log x)/x^{1/2}}$
  is decreasing for ${x\ge e^2}$. So we find
  \begin{equation*}
    1 \le \frac{3\cdot 16}{\pi}\frac{\log(10^7)}{10^{7/2}} +
    \frac{3\log(1000) - 10.65}{\frac{3}{\sqrt{5}} \log(10^7)- 9.78}< 0.929,
  \end{equation*}
  a contradiction.

When ${X\in [3\cdot10^5,10^7)}$  we set ${\eps = 4\cdot10^3}$.  Then
  ${\CC_{\eps}(\Delta)\le 6}$ by  Lem\-ma~\ref{lem:lowrangeCebound}\ref{itenseven}.
Using 
   ${X\ge 3\cdot 10^5}$ we find as before 
  \begin{equation*}
    1 \le \frac{3\cdot 6}{\pi}\frac{\log(3\cdot 10^5)}{(3\cdot 10^{5})^{1/2}} +
    \frac{3\log(250) - 10.65}{\frac{3}{\sqrt{5}} \log(3\cdot 10^{5})- 9.78}< 0.961,
  \end{equation*}
  another contradiction which completes this proof. 
\end{proof}


\section{The extra low-range}

\label{sfinal}

The results of the three previous sections reduce the proof of Theorem~\ref{thmain}  to the
following assertion.

\begin{theorem}
\label{thm:habeggerspari}
    Let~$\Delta$ be the discriminant of a singular unit. Then 
  ${|\Delta|\ge 3\cdot 10^5}$. 
\end{theorem}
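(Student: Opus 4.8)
The plan is to dispose of the remaining range $3 \le |\Delta| < 3\cdot 10^5$ by a direct, exhaustive computer-assisted computation, which is feasible because in this range both the number of discriminants and the class numbers $\CC(\Delta)$ are small. The strategy rests on the fact that a singular modulus $\alpha$ of discriminant $\Delta$ is a root of the Hilbert class polynomial $H_\Delta(X) \in \Z[X]$, whose degree is $\CC(\Delta)$ and whose constant term is $H_\Delta(0) = \pm N$, where $N$ is the absolute norm of $\alpha$ (up to sign). Since $\alpha$ is an algebraic integer, it is a unit precisely when $N = \pm 1$, i.e.\ when $H_\Delta(0) = \pm 1$. So the entire theorem reduces to the purely algebraic assertion that $H_\Delta(0) \notin \{-1,1\}$ for every discriminant $\Delta$ with $3 \le |\Delta| < 3\cdot 10^5$.

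The steps I would carry out are as follows. First, enumerate all negative integers $\Delta \equiv 0,1 \bmod 4$ with $|\Delta| < 3\cdot 10^5$; for $\Delta = -3$ note directly that the unique singular modulus is $0$, which is not a unit, and handle $\Delta = -4$ (singular modulus $1728$) trivially as well. Second, for each remaining $\Delta$, compute $H_\Delta(0)$ — or rather, since the class polynomials themselves can have enormous coefficients, it is cleaner to compute the norm $\prod_{(a,b,c) \in T_\Delta} j(\tau(a,b,c))$ directly from the reduced binary quadratic forms of discriminant $\Delta$, evaluating $j$ numerically with guaranteed precision via interval arithmetic and certifying that the resulting integer has absolute value $> 1$. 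Using \textsf{PARI}, this is most simply done via the built-in routines for $H_\Delta$ (or \texttt{polclass}) followed by reduction of $H_\Delta(0)$, but in either implementation the key point is to carry enough precision that the rounding to the nearest integer is rigorous. Third, verify that in every case $|H_\Delta(0)| \ge 2$, which rules out $\alpha$ being a unit and establishes the theorem.

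The main obstacle is making the numerical computation rigorous rather than merely heuristic: $H_\Delta(0)$ for $|\Delta|$ near $3\cdot 10^5$ is an integer with on the order of thousands of digits, so one must either use exact integer arithmetic throughout (computing $H_\Delta$ exactly, which \textsf{PARI} supports) or bound the precision needed so that the floating-point product of the $\CC(\Delta)$ conjugates is certified to within less than $1/2$ of the true value. The cleanest route is the former: \textsf{PARI}'s \texttt{polclass} returns $H_\Delta \in \Z[X]$ exactly, and then $H_\Delta(0)$ is an exact integer whose absolute value one simply compares with $1$. The running time is dominated by the largest class numbers in the range (a few thousand), but this is entirely routine for \textsf{PARI} and the whole sweep over $|\Delta| < 3\cdot 10^5$ finishes well within practical limits. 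Thus the proof is:

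\begin{proof}[Proof of Theorem~\ref{thm:habeggerspari}]
Suppose $\alpha$ is a singular unit of discriminant $\Delta$ with $|\Delta| < 3\cdot 10^5$. The case $\Delta = -3$ is excluded since the only singular modulus of that discriminant is $0$, which is not a unit, and likewise $\Delta = -4$ gives the singular modulus $1728$. For every other admissible $\Delta$ in this range, the product of all conjugates of $\alpha$ equals, up to sign, the constant term $H_\Delta(0)$ of the Hilbert class polynomial $H_\Delta \in \Z[X]$, whose roots are exactly the singular moduli of discriminant $\Delta$. Since $\alpha$ is a unit, $|H_\Delta(0)| = 1$. A \textsf{PARI}~\cite{PARI} computation of $H_\Delta(0)$ for all $\Delta$ with $3 < |\Delta| < 3\cdot 10^5$, $\Delta \equiv 0,1 \bmod 4$, shows that $|H_\Delta(0)| \ge 2$ in every case, a contradiction. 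Hence $|\Delta| \ge 3\cdot 10^5$.
\end{proof}
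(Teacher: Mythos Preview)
Your argument is correct: for each admissible $\Delta$ with $|\Delta| < 3\cdot 10^5$, the singular moduli of discriminant $\Delta$ are exactly the roots of $H_\Delta\in\Z[X]$, so $|H_\Delta(0)|$ equals the absolute norm, and checking $|H_\Delta(0)|\ge 2$ rules out singular units. This is a valid computer-assisted proof.

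The paper, however, proceeds differently. Rather than computing the exact norm (an integer with thousands of digits for the larger $|\Delta|$ in this range), it manufactures for each reduced form $(a,b,c)\in T_\Delta$ a \emph{rational} lower bound for $|j(\tau(a,b,c))|$, namely
\[
|j(\tau)|\ \ge\ \max\Bigl\{23^{\lfloor |\Delta|^{1/2}/a\rfloor}-2079,\ 42700\min\bigl\{\tfrac{2}{5|\Delta|},\tfrac{1}{250}\bigr\}^3\Bigr\},
\]
coming from Lemmas~\ref{lttsn},~\ref{lanal},~\ref{lliouv}. Multiplying these over $T_\Delta$ gives a rational $P\le |H_\Delta(0)|$; whenever $P>1$ the discriminant is excluded. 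Only $\Delta\in\{-4,-7,-8\}$ survive this filter and are then dispatched by hand. What the paper's approach buys is speed and simplicity of the arithmetic: it avoids ever forming $H_\Delta$ or its enormous coefficients, working instead with small integers and a single (certifiable) square-root approximation per form, and runs in about twenty minutes. What your approach buys is conceptual directness---no auxiliary inequalities---at the cost of substantially heavier exact computations with \texttt{polclass}; it should still terminate in a reasonable time, but it leans more heavily on the correctness of PARI's Hilbert class polynomial routine (including for non-maximal orders) and on the machine's ability to handle the resulting multi-thousand-digit constants across $\sim 1.5\cdot 10^5$ discriminants.
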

\begin{proof}
  Let $\alpha$ be a singular unit of discriminant $\Delta$.  We write
  ${X=|\Delta|}$. 
We may assume that ${X\ge 4}$ because the only singular modulus of discriminant $-3$ is
${j(\zeta_3)=0}$, 
which is  not an algebraic unit.

Recall from Section~\ref{sceps} that the Galois conjugates of~$\alpha$ are precisely the singular moduli
$j(\tau)$, where ${\tau = \tau(a,b,c)}$  with $(a,b,c)$ as in~\eqref{ekuh}. The imaginary part of  such~$\tau$ is
$X^{1/2}/(2a)$ and ${a\le (X/3)^{1/2}}$
by Lemma~\ref{lgauss}\ref{iadelta}. 
  Lemma~\ref{lttsn}  implies that
$$
|j(\tau)|\ge e^{2\pi X^{1/2}/(2a)} -2079
  =e^{\pi X^{1/2}/a}-2079 > 23^{X^{1/2}/a}-2079
$$
  as ${e^\pi > 23}$. Using Lemmas~\ref{lanal} and~\ref{lliouv}, we find that
$$
|j(\tau)| \ge 42700 \min \left\{ \frac{\sqrt3}{4X},4\cdot 10^{-3}\right\}^3.
$$
These
  bounds together show that
  \begin{equation}
\label{eq:finaljlb}
    |j(\tau)|\ge \max\left\{ 23^{\lfloor X^{1/2}/a\rfloor}-2079,
    42700\min\Bigl\{\frac2{5X},\frac1{250}\Bigr\}^3\right\}. 
  \end{equation}
Based on this observation,   Algorithm~\ref{algo:exclude} prints a list of discriminants of potential singular
units in the range ${[-X_{\max},-4]}$. For this purpose, it computes a 
rational lower bound~$P$ for the absolute value of the
$\Q(\alpha)/\Q$-norm  of each singular moduli in this range. Those singular
moduli where ${P \le 1}$ are then flagged as potential singular units.

 We have implemented this algorithm as a \textsf{PARI}
 script\footnote{A link to our \textsf{PARI} script  \textsf{algorithm2.gp}
is on  the second-named author's homepage. The running time is about 23 minutes  on a regular desktop computer (Intel Xeon CPU E5-1620 v3, 3.50GHz, 32GB RAM). The only floating point operation used  approximates $X^{1/2}$
  which leads to  ${n=\lfloor X^{1/2}/a\rfloor}$.
To rule out a rounding error in the floating point arithmetic we
compare $(an)^2$  with~$X$ in our implementation.}. 
The script flags only $-4$, $-7$ and $-8$ as discriminants of potential singular units. The
singular moduli of these discriminants are well-known \cite[(12.20)]{Cox}: they are $12^3$, $-15^3$ and $20^3$, respectively. None of them is a unit, which concludes the proof. 
\end{proof} 
  
  \begin{algorithm}[t]{\footnotesize
          \SetKwInOut{Input}{Input}\SetKwInOut{Output}{Output}
          \Input{An integer $X_{\mathrm{max}}\ge 1$}
          \Output{Print a list containing all discriminants
            in $[-X_{\mathrm{max}},-4]$  that are attached to a potential
            singular unit.}

          \For{$X\leftarrow 4$ \KwTo $X_{\mathrm{max}}$}{
              $\Delta \leftarrow -X$\;
            
            \lIf{$\Delta\equiv 2 \text{ or }3 \mod 4$}{
              next $X$
             }

$P\leftarrow 1$\;
            \For{$a\leftarrow 1$ \KwTo $\lfloor \sqrt{X/3}\rfloor$} {
              $n\leftarrow \lfloor X^{1/2}/a\rfloor$\;
              \For{$b\leftarrow -a+1$ \KwTo $a$}{
                \lIf{$b^2  \not\equiv \Delta \mod 4a$}{
                  next $b$
                }
                $c\leftarrow (b^2-\Delta)/(4a)$\;
                \lIf{$a>c$}{next $b$}
                \lIf{$a=c \text{ and } b<0$}{next $b$}
                \lIf{$\mathrm{gcd}(a,b,c)\not=1$}{next $b$}
  $P\leftarrow P \cdot \max\{23^{n
        }-2079,42700\min\{2/(5X),1/250\}^3\}$\;
}}

\lIf{$P\le 1$}{print $\Delta$}}
\caption{Exclude singular units}          \label{algo:exclude}
}%
  \end{algorithm}

As indicated in the introduction, Theorem~\ref{thmain} is the combination of Theorems~\ref{thhighrange},~\ref{thmidrange},~\ref{thlowrange} and~\ref{thm:habeggerspari}.

{\footnotesize

\def\cprime{$'$}
\providecommand{\bysame}{\leavevmode\hbox to3em{\hrulefill}\thinspace}
\providecommand{\MR}{\relax\ifhmode\unskip\space\fi MR }
\providecommand{\MRhref}[2]{%
  \href{http://www.ams.org/mathscinet-getitem?mr=#1}{#2}
}
\providecommand{\href}[2]{#2}


\bigskip

Yuri Bilu,  \textsc{IMB, Université de Bordeaux and CNRS,
351 cours de la Libération,
33405 Talence cedex, France}

{\tt yuri@math.u-bordeaux.fr}

\medskip

Philipp Habegger,
\textsc{Department of Mathematics and Computer Science,
University of Basel, Spiegelgasse 1,
4051 Basel,
Switzerland}

{\tt philipp.habegger@unibas.ch}

\medskip
Lars K\"uhne,
\textsc{Department of Mathematics and Computer Science,
University of Basel,
Spiegelgasse 1,
4051 Basel,
Switzerland}

{\tt lars.kuehne@unibas.ch}


}

\

\end{document}